\documentclass[11pt]{article}
\usepackage[]{geometry}
\usepackage{graphicx}
\usepackage{caption}
\usepackage{subcaption}
\usepackage{hyperref}
\usepackage{amsmath}
\usepackage{amsthm}
\usepackage{amssymb}
\usepackage{authblk}

\usepackage{bm}
\usepackage{xcolor}
\usepackage{float}
\usepackage{booktabs}

\usepackage{yhmath}
\usepackage{mathdots}
\usepackage{MnSymbol}

\geometry{
top=2.2cm,
bottom=2.5cm,
left=2.2cm,
right=2.2cm
}

\everymath{\displaystyle}
\allowdisplaybreaks

\newtheorem{theorem}{Theorem}[section]
\newtheorem{definition}{Definition}[section]

\newtheorem{remark}{Remark}[section]
\newtheorem{corollary}{Corollary}[section]
\newtheorem{lemma}{Lemma}[section]
\DeclareMathOperator*{\diag}{diag}
\DeclareMathOperator*{\supp}{supp}
\newcommand{\opt}{\mathrm{opt}}
\newcommand{\dd}{\mathrm{d}}
\newcommand{\ii}{\mathrm{i}}
\newcommand{\bi}{b}
\newcommand{\nel}{n_{\mathrm{el}}}

\providecommand{\keywords}[1]
{ \small \textbf{\textit{Keywords---}}#1 }

\title{Outlier-free isogeometric discretizations for Laplace eigenvalue problems: closed-form eigenvalue and eigenvector expressions}
\author[$\;$]{}

\affil[$\;$]{ {\large Noureddine LAMSAHEL}}
\affil[$\;$]{ {\scriptsize  Mohammed VI Polytechnic University, The UM6P Vanguard Center, Benguerir 43150, Lot 660, Hay Moulay Rachid, Morocco.}}
\affil[$\;$]{ {\scriptsize  University of Littoral C\^ote d'Opale, Laboratory of Pure and Applied Mathematics, 50 Rue F. Buisson, 62228 Calais-Cedex, France.}}
\affil[$\;$]{ {\scriptsize Emails: noureddine.lamsahel@um6p.ma, noureddine.lamsahel@etu.univ-littoral.fr  }}
\affil[$\;$]{ {\large }}
\affil[$\;$]{ {\large Carla MANNI}}
\affil[$\;$]{ {\scriptsize  University of Rome Tor Vergata, Department of Mathematics,
Via della Ricerca Scientifica 1, 00133 Rome, Italy.}}
\affil[$\;$]{ {\scriptsize Email: manni@mat.uniroma2.it}}
\affil[$\;$]{ {\large }}
\affil[$\;$]{ {\large Ahmed RATNANI}}
\affil[$\;$]{ {\scriptsize  Mohammed VI Polytechnic University, The UM6P Vanguard Center, Benguerir 43150, Lot 660, Hay Moulay Rachid, Morocco.}}
\affil[$\;$]{ {\scriptsize Email: Ahmed.RATNANI@um6p.ma}}
\affil[$\;$]{ {\large }}
\affil[$\;$]{ {\large Stefano SERRA-CAPIZZANO}}
\affil[$\;$]{ {\scriptsize University of Insubria, Department of Science and High Technology,
Via Valleggio 11, 22100 Como, Italy.}}
\affil[$\;$]{ {\scriptsize Uppsala University, Department of Information Technology, Division of Scientific Computing, ITC, L\"agerhyddsv.~2, Hus~2, P.O.~Box~337, SE-751 05 Uppsala, Sweden.}}
\affil[$\;$]{ {\scriptsize Emails: s.serracapizzano@uninsubria.it,  stefano.serra@it.uu.se}}
\affil[$\;$]{ {\large }}
\affil[$\;$]{ {\large Hendrik SPELEERS}}
\affil[$\;$]{ {\scriptsize  University of Rome Tor Vergata, Department of Mathematics,
Via della Ricerca Scientifica 1, 00133 Rome, Italy.}}
\affil[$\;$]{ {\scriptsize Email: speleers@mat.uniroma2.it}}

\date{}

\begin{document}

\maketitle

\begin{abstract}
We derive explicit closed-form expressions for the eigenvalues and eigenvectors of the matrices resulting from isogeometric Galerkin discretizations based on outlier-free spline subspaces for the Laplace operator, under different types of homogeneous boundary conditions on bounded intervals.
For optimal spline subspaces and specific reduced spline spaces, represented in terms of B-spline-like bases, we show that the corresponding mass and stiffness matrices exhibit a Toeplitz-minus-Hankel or Toeplitz-plus-Hankel structure. Such matrix structure holds for any degree $p$ and implies that the eigenvalues are an explicitly known sampling of the spectral symbol of the Toeplitz part. Moreover, by employing tensor-product arguments, we extend the closed-form property of the eigenvalues and eigenvectors to a $d$-dimensional box. 
As a side result, we have an algebraic confirmation that the considered optimal and reduced spline spaces are indeed outlier-free. 
\end{abstract}

\noindent\keywords{Laplace eigenvalue problem, Isogeometric analysis, Optimal spline subspace, Cardinal B-spline, Mass matrix, Stiffness matrix, Toeplitz matrix, Hankel matrix.}

\section{Introduction}
Isogeometric analysis (IGA) is a well-established paradigm for the numerical treatment of differential problems, introduced in \cite{Hughes:2005}, with the aim of unifying computer aided design (CAD) and finite element analysis (FEA). 
The leading idea in IGA is to use the same representation tools for the design as well as for the analysis (in an isoparametric environment), providing a true design-through-analysis methodology \cite{Cottrell:2009,Hughes:2005}.
In its original form, IGA adopts the core functions of CAD systems, i.e., B-splines and their rational counterpart (NURBS), to approximate the solution of the considered differential problems.
The isogeometric approach shows important advantages over classical $C^0$ FEA. In particular, the inherently high smoothness of B-splines and NURBS allows for a higher accuracy per degree of freedom; see \cite{Bressan:2019,Sande:2019,Sande:2020} and references therein.

Spectral analysis typically provides a global picture of the eigenvalues and eigenfunctions of a given (discretized) differential operator. The study of the spectral error is of major interest in applications because, in practice, computations are not performed in the limit of mesh refinement, and for a large class of boundary and initial-value problems the total discretization error on a given mesh can be recovered from its spectral error; see, e.g., \cite{Hughes:2014}.
Primarily second order eigenvalue problems have been addressed and related IGA discretizations have been investigated by several authors; see \cite{Cottrell:2006,garoni2019symbol,manni2022application} and references therein. 

The explicit error estimates given in \cite{Sande:2019,Sande:2020} for spline spaces of different smoothness show that discretizations in spline spaces of maximal smoothness on uniform grids are preferable. This theoretical finding totally agrees with the numerical observations: maximally smooth spline discretizations of degree $p$ on uniform grids lead to a prime discrete spectrum, which almost entirely converges to the continuous one for increasing $p$ and does not present the well-known artifacts of high-order $C^0$ FEA, the so-called {optical branches}.
From a spectral and matrix theoretical point of view \cite{Ba2,SIMAX-Q-p-d,garoni2019symbol}, it is clearly understood that the number of optical branches amounts to $(p-k)^d-1$, with $k$ being the imposed smoothness and $d$ the dimensionality of the differential problem. The previous formula indicates that these spectral artifacts disappear only in the case of maximal smoothness, i.e., $k=p-1$, while in all other cases their number is exponential in the dimensionality $d$.

Notwithstanding maximally smooth spline spaces on uniform grids are an excellent choice for addressing eigenvalue problems, they are not completely satisfactory:
a very small portion of the eigenvalues are poorly approximated and the corresponding computed values are much larger than the exact ones. These spurious values are usually referred to as {outliers} \cite{Cottrell:2006}. 
The number of outliers increases with the degree $p$ 
and, for fixed $p$, it is independent of the number of degrees of freedom for univariate problems, while a ``thin layer'' of outliers appears in the multivariate setting. Discretization techniques that remove those outliers are often called outlier-free discretizations.
Accurate outlier-free discretizations are important both for their superior description of the spectrum of the continuous operator and for their beneficial effects in various applicative contexts such as (explicit) dynamics.

A solid analytical answer to the problem of outlier removal from isogeometric Galerkin discretizations of second order eigenvalue problems has been recently given in \cite{manni2022application} by considering subspaces of maximally smooth splines of any degree that are optimal spaces in the sense of $L^2$ Kolmogorov $n$-widths for certain function classes related to the considered eigenvalue problems \cite{Floater:2017,Floater:2018}. These optimal spline subspaces are simply identified by additional homogeneous boundary conditions and by knots that are uniform (or shifted uniform) and are equipped with a B-spline-like basis. For any degree, they give rise to a complete removal of the outliers without loss of accuracy in the approximation of all eigenfunctions.

The fact that outliers are related to the treatment of boundary conditions has been observed already for a long time. Interesting empirical procedures for outlier removal based on this intuition have been presented in \cite{deng2021boundary,hiemstra2021removal}, where the authors exploit the imposition of suitable homogeneous additional boundary conditions in a similar (but slightly different) manner to the above mentioned optimal spline subspaces. More precisely,
in \cite{deng2021boundary} a penalization of specific high-order derivatives near the
boundary is proposed, while in \cite{hiemstra2021removal} the same high-order derivatives at the boundary are strongly set equal to zero, by using suitable reduced spline spaces as trial spaces; see also \cite{sogn2019robust,takacs2016approximation}. Although there is a clear numerical evidence that the strong imposition of the considered additional boundary conditions removes the outliers without affecting the accuracy of the approximation for the global spectrum, a proper theoretical foundation of the whole process is missing.
 
The	knowledge of the spectrum of matrices arising from a given discretization technique for partial differential equations, not necessarily confined to eigenvalue differential problems, plays also an important role in designing efficient and robust solvers for the resulting linear systems. It is the foundation for the convergence analysis of (preconditioned) Krylov methods and for the theoretical analysis of effective preconditioners and fast multigrid/multi-iterative solvers. 

The spectral behavior of matrices arising from isogeometric Galerkin methods based on standard B-splines and NURBS, with boundary conditions in strong or weak form, has been analyzed in a series of recent papers; see, e.g., \cite{garoni2022,garoni2020NURBS} and references therein. 
The analysis typically exploits the theory of generalized locally Toeplitz (GLT) sequences \cite{garoni2017generalized,garoni2018generalized} as well as properties of the bases used to represent the discretization spaces. More precisely, it has been shown that sequences of such matrices corresponding to different levels of mesh refinement admit an eigenvalue distribution, which can be compactly described by a function called (GLT) symbol. 
This symbol has a canonical structure incorporating the approximation technique, the geometry of the domain, and the coefficients of the principal terms of the differential problem. The eigenvalues of the considered matrices can be approximated by a uniform sampling of the symbol and, when the size of the matrices increases, such an approximation converges to the true spectrum, up to few outliers. 
As application, the spectral information obtained from the symbol has been utilized to design efficient preconditioners and multigrid solvers \cite{SINUM}, but the presence of the outliers dirties and weakens the elegant symbol information. 

We want to emphasize that none of the above results provides closed-form expressions of the discrete eigenvalues and the corresponding eigenvectors for the considered (full or reduced) spline discretization spaces, unless in few specific cases of low degree \cite{ekstrom2018eigenvalues}.
However, it has been observed in \cite{deng2021analytical} that the discretization matrices for the standard full spline spaces are almost Toeplitz-minus-Hankel or Toeplitz-plus-Hankel matrices and that, if they would have exactly this structure, explicit closed-form expressions for the corresponding eigenpairs could be realized. Such structured matrices belong to certain $\tau$ matrix algebras, which have been widely studied in the literature; see, e.g., \cite{bini1983spectral,bozzo1995use,di1995matrix} and references therein.

In this paper, we continue the spectral study of isogeometric Galerkin discretizations in optimal spline subspaces for the Laplace operator, under different types of boundary conditions, and we assume that the considered subspaces are represented in terms of the B-spline-like basis from \cite{manni2022application} (see also \cite{DiVona:2019}).
We derive explicit closed-form expressions for the eigenvalues and eigenvectors of the corresponding mass and stiffness matrices, for any degree. These expressions show that the eigenvalues are a simple sampling of the GLT symbol \cite{garoni2014spectrum}, with no outliers. Moreover, when fixing the type of boundary conditions, both the mass and the stiffness matrix exhibit the same Toeplitz-minus-Hankel or Toeplitz-plus-Hankel structure, and thus they belong to the same $\tau$ matrix algebra \cite{bozzo1995use}. This allows us to deduce also explicit closed-form expressions for the approximated spectrum of the Laplace operator, under the addressed boundary conditions. In this way, we get an algebraic confirmation that optimal spline subspaces lead to outlier-free discretizations.

Furthermore, we extend our analysis to isogeometric Galerkin discretizations based on the {reduced spline spaces} considered in \cite{hiemstra2021removal} (see also \cite{sogn2019robust,takacs2016approximation}) and represented in terms of the B-spline-like basis from \cite{manni2022application}. It turns out that the corresponding stiffness and mass matrices exhibit again a Toeplitz-minus/plus-Hankel structure. This gives rise to explicit closed-form expressions for their eigenvalues and eigenvectors. As a consequence, we can conclude that such spaces are outlier-free as well.

\subsection{Main results}
The eigenvalues of a square matrix $A$ of size $n$ are denoted by $\lambda_j(A)$, $j=1,\ldots,n$ and the corresponding eigenvectors by $\bm{u}_j(A)$ with components $u_{i,j}(A)$, $i,j=1,\ldots,n$.
We define
\begin{equation*}
g_p^r(\theta)= (-1)^{r}\mathcal{N}_{2p+1}^{(2r)}(p+1)+2(-1)^{r}\displaystyle\sum_{k=1}^p\mathcal{N}^{(2r)}_{2p+1}(p+1-k)\cos(k\theta),\quad \theta\in[0,\pi], \quad r\in\{0,1\}, \quad p\geq1,
\end{equation*}
where $\mathcal{N}_p$ is the cardinal B-spline of degree $p\in\mathbb{N}$, given by \eqref{eq:cardinal-B-spline-1} and \eqref{eq:cardinal-B-spline-2}.
Then, our main results can be summarized as follows.

\begin{enumerate}
\item We derive explicit closed-form expressions for the eigenvalues and eigenvectors of the mass matrix $M_n^{p,\bi}$ and the stiffness matrix $K_n^{p,\bi}$ when considering the B-spline-like basis of the univariate optimal spline subspaces from \cite{manni2022application} of dimension $n$, for any degree $p\geq1$ and three different types of boundary conditions, indicated with the index $\bi=0,1,2$.
\begin{itemize}
\item Dirichlet boundary conditions \eqref{Dbc}, indicated with $\bi=0$: The matrices $M_n^{p,0}$ and $K_n^{p,0}$ are symmetric and centrosymmetric. Under the assumption $n\geq \max\left\{p+1,p+\left\lfloor\frac{p}{2}\right\rfloor-1\right\}$, Theorem~\ref{closedformdiri} and Corollary~\ref{corollaryeigdiri} reveal that
\begin{equation*}
\lambda_j\left(M_n^{p,0}\right)=(n+1)^{-1}\;g^{0}_p\left(\frac{j\pi}{n+1}\right),\quad\lambda_j\left(K_n^{p,0}\right)=(n+1)\;g^{1}_p\left(\frac{j\pi}{n+1}\right),\quad j=1,\ldots,n,
\end{equation*}
and
\begin{equation*}
u_{i,j}\left(M_n^{p,0}\right)=u_{i,j}\left(K_n^{p,0}\right)=\sqrt{\frac{2}{n+1}}\sin\left(\frac{ij\pi}{n+1}\right),\quad i,j=1,\ldots, n.
\end{equation*}
\item Neumann boundary conditions \eqref{Nbc}, indicated with $\bi=1$:
The matrices $M_n^{p,1}$ and $K_n^{p,1}$ are symmetric and centrosymmetric. Under the assumption $n\geq \max\left\{2p-\left\lfloor\frac{p}{2}\right\rfloor,2p-2\left\lfloor\frac{p}{2}\right\rfloor+1\right\}$, Theorem~\ref{closedformneum} and Corollary~\ref{corollaryeigneum} reveal that
\begin{equation*}
\lambda_j\left(M_n^{p,1}\right)=n^{-1}\;g^{0}_p\left(\frac{(j-1)\pi}{n}\right),\quad\lambda_j\left(K_n^{p,1}\right)=n \;g^{1}_p\left(\frac{(j-1)\pi}{n}\right),\quad j=1,\ldots,n,
\end{equation*}
and
\begin{equation*}
u_{i,j}\left(M_n^{p,1}\right)=u_{i,j}\left(K_n^{p,1}\right)=\sqrt{\frac{2}{n}}\;c_j\cos\left(\frac{(j-1)\pi}{n}\left(i-\frac{1}{2}\right)\right),\quad c_j=\left\{\begin{array}{ll}
    \frac{1}{\sqrt{2}},& j=1, \\
    1,& \text{otherwise},
\end{array}\quad i,j=1,\ldots, n.
\right.
\end{equation*}
\item Mixed boundary conditions \eqref{Mbc}, indicated with $\bi=2$:
The matrices $M_n^{p,2}$ and $K_n^{p,2}$ are only symmetric. Under the assumption $n\geq \max\left\{p+1,p+\left\lfloor\frac{p}{2}\right\rfloor\right\}$, Theorem~\ref{closedformmixed} and Corollary~\ref{corollaryeigmixed} reveal that
\begin{equation*}
\lambda_j\left(M_n^{p,2}\right)=\left(\frac{2n+1}{2}\right)^{-1}g^{0}_p\left(\frac{(2j-1)\pi}{2n+1}\right),\quad \lambda_j\left(K_n^{p,2}\right)=\left(\frac{2n+1}{2}\right)g^{1}_p\left(\frac{(2j-1)\pi}{2n+1}\right),\quad j=1,\ldots,n,
\end{equation*}
and
\begin{equation*}
u_{i,j}\left(M_n^{p,2}\right)=u_{i,j}\left(K_n^{p,2}\right)=\sqrt{\frac{4}{2n+1}}\sin\left(\frac{i(2j-1)\pi}{2n+1}\right),\quad  i,j=1,\ldots, n.
\end{equation*}
We also identify a new Hankel matrix for which we derive the explicit eigenvalues and eigenvectors in Proposition~\ref{closedformpropmixd}.
\end{itemize}
    
\item The above results for optimal spline subspaces are expanded to other outlier-free spline spaces. For simplicity of presentation, we concentrate exclusively on Dirichlet boundary conditions but similar results hold for other boundary conditions as well.

\begin{itemize}
\item We derive explicit closed-form expressions for the eigenvalues and eigenvectors of the mass matrix $\overline{M}_n^{p,0}$ and the stiffness matrix $\overline{K}_n^{p,0}$, when considering the B-spline-like basis of the univariate reduced spline spaces from \cite{manni2022application} of dimension $n$, for any even degree $p\geq1$ and Dirichlet boundary conditions.
The matrices $\overline{M}_n^{p,0}$ and $\overline{K}_n^{p,0}$ are symmetric and centrosymmetric.
Under the assumption $n\geq p+\frac{p}{2}$, Theorem~\ref{closedformreduced} and Corollary~\ref{corollaryeigreduced} reveal that
\begin{equation*}
\lambda_j\left(\overline{M}_n^{p,0}\right)=n^{-1}\;g^{0}_p\left(\frac{j\pi}{n}\right),\quad \lambda_j\left(\overline{K}_n^{p,0}\right)=n\;g^{1}_p\left(\frac{j\pi}{n}\right),\quad j=1,\ldots,n,
\end{equation*}
and
\begin{equation*}
u_{i,j}\left(\overline{M}_n^{p,0}\right)=u_{i,j}\left(\overline{K}_n^{p,0}\right)=\sqrt{\frac{2}{n}}\;c_j\sin\left(\frac{j\pi}{n}\left(i-\frac{1}{2}\right)\right),\quad    c_j=\left\{\begin{array}{ll}
    \frac{1}{\sqrt{2}},& j=n, \\
    1,& \text{otherwise},
\end{array}\right.  \quad  i,j=1,\ldots, n.
\end{equation*}
For other degrees $p$ and other boundary conditions, the reduced spline spaces from \cite{hiemstra2021removal} are either optimal spline subspaces, and hence covered by item 1, or the analysis is similar.
    
\item We extend the closed-form property of the eigenvalues and eigenvectors to the multivariate tensor-product setting on a $d$-dimensional box; see Theorem~\ref{extendclosedform}.
\end{itemize}
\end{enumerate}
In all cases, explicit closed-form expressions for the resulting approximations of the spectrum of the Laplace operator, under the addressed boundary conditions, can be immediately deduced; see Corollaries~\ref{coroloryclosedform0},~\ref{coroloryclosedform1},~\ref{coroloryclosedform2}, and~\ref{coroloryclosedformreduced}.
Our main results confirm that the optimal spline subspaces from \cite{manni2022application} and the reduced spline spaces from \cite{hiemstra2021removal} are indeed outlier-free; see Remarks~\ref{rmk:outlierfree0},~\ref{rmk:outlierfree1},~\ref{rmk:outlierfree2}, and~\ref{rmk:outlierfreereduced}. Moreover, it follows that, for degree $p=3$ (resp., $p=2$) and under Dirichlet (resp., Neumann) boundary conditions, the penalization technique in \cite{deng2021boundary}, when the penalty parameter goes to infinity, exactly corresponds to the first (resp., second) optimal spline subspace discretization.
Finally, it is also evident that the GLT symbols of the optimal spline subspaces and the above reduced spline spaces are identical and equal to the symbol of the related full spline space of the same degree.

\subsection{Outline of the paper}
In Section~\ref{sec:preliminaries}, we provide the necessary notations, definitions, and preliminary results crucial for our analysis. We start by introducing the eigenvalue problems of interest and by providing a brief overview of optimal spline subspaces designed for three classical types of boundary conditions (Dirichlet/Neumann/mixed) along with the construction of the corresponding B-spline-like bases. Afterwards,  we collect from the literature some results about the closed form of the eigenvalues and eigenvectors for structured matrices of Toeplitz-minus-Hankel or Toeplitz-plus-Hankel form.  In Section~\ref{sec:closedformoptspaces}, we delve into the matrices resulting from isogeometric Galerkin discretizations based on optimal spline subspaces. This section is organized into three subsections, each dedicated to a specific type of boundary conditions. Adopting a coherent format across these subsections, we initiate the discussion by introducing a matrix notation designed to handle both mass and stiffness matrices simultaneously. Upon establishing the Toeplitz-minus-Hankel or Toeplitz-plus-Hankel structure of this matrix, we deduce a closed-form expression for its eigenvalues and eigenvectors. For mixed boundary conditions, a new class of Hankel matrices is identified and the expression for the associated eigenvalues and eigenvectors is derived. To enhance readability, the detailed proofs of the main results presented in this section are collected in Appendix~\ref{sec:A}. In Section~\ref{sec:otherspaces}, our focus shifts to other outlier-free spline spaces. Firstly, we consider specific reduced spline spaces, similar to the optimal spline subspaces. Following a parallel approach to that in Section~\ref{sec:closedformoptspaces}, we derive closed-form expressions for the eigenvalues and eigenvectors of the corresponding mass and stiffness matrices, which allows us to conclude that these spaces are truly outlier-free. Then, we extend the closed-form expressions to spline discretizations defined on a bounded box of $\mathbb{R}^{d}$, $d\geq1$. Finally, Section~\ref{sec:conclusions} summarizes the results, provides some further comments, and discusses some future work.

\section{Model problems and preliminaries}\label{sec:preliminaries}
In this section, we describe our eigenvalue problems of interest, considering three classical types of boundary conditions and for each of them we define a family of optimal spline subspaces. In \cite{manni2022application}, it was shown that these optimal spline subspaces lead to outlier-free discretizations and a B-spline-like basis was constructed, which we summarize here. As we will show later in the paper, the corresponding discretization matrices possess a Toeplitz-minus-Hankel or Toeplitz-plus-Hankel structure. Therefore, we present some known results about the closed form of the eigenvalues and eigenvectors of such matrices here as well.

\subsection{Eigenvalue problems}
We consider three one-dimensional eigenvalue problems, each corresponding to the Laplace operator, of the form
\begin{equation}\label{pd1}
\left\{
\begin{array}{ll}
-u''=\lambda u,\quad \text{in } (0,1),\\[0.1cm]
\mathbf{BC}.
\end{array}
\right.
\end{equation}
Here, $\mathbf{BC}$ serves as an abbreviation for boundary conditions and represents one of the following.
\begin{itemize}
\item Dirichlet boundary conditions:
\begin{equation}\label{Dbc}
u(0)=u(1)=0;
\end{equation}
\item Neumann boundary conditions:
\begin{equation}\label{Nbc}
u'(0)=u'(1)=0;
\end{equation}
\item Mixed boundary conditions:
\begin{equation}\label{Mbc}
u(0)=u'(1)=0.
\end{equation}
\end{itemize}

It is well established that the families of non-trivial exact solutions $(\lambda_k,u_k)$ of \eqref{pd1} subject to the boundary conditions \eqref{Dbc}--\eqref{Mbc} are given by, respectively,
\begin{align}
&\left\{ \left( \left(k\pi\right)^2,\; \sin(k\pi \;\cdot) \right),\quad  k=1,2,\ldots\, \right\}, \label{exactDbc} \\
&\left\{ \left( \left((k-1)\pi\right)^2,\; \cos((k-1)\pi \;\cdot) \right),\quad  k=1,2,\ldots\, \right\}, \label{exactNbc} \\
& \left\{ \left(\left( \frac{(2k-1)\pi }{2}\right)^2,\; \sin\left(\frac{(2k-1)\pi}{2} \;\cdot\right) \right),\quad  k=1,2,\ldots\, \right\}. \label{exactMbc}
\end{align}

In \cite[Section~3]{ekstrom2018eigenvalues}, it was illustrated how a closed-form expression for the approximate eigenvalues and eigenvectors can be achieved for Galerkin discretizations in standard full spline spaces related to the Dirichlet boundary conditions \eqref{Dbc}. However, the proposed approach is restricted to $p=1,2$. When $p$ is greater than or equal to $3$, the mass and stiffness matrices are not in the same matrix algebra anymore, making the approach unfeasible. More recently, in \cite[Section~5]{deng2021boundary}, it was analytically shown that a closed-form expression for the approximate eigenvalues and eigenvectors can be derived using another isogeometric discretization method, namely the penalization technique. Similar to the limitation in \cite{ekstrom2018eigenvalues}, the exact expression is confined to specific scenarios, namely $p=3$ for Dirichlet boundary conditions \eqref{Dbc} and $p=2$ for Neumann boundary conditions \eqref{Nbc}.

One of the main objectives of this paper is to derive an explicit closed-form expression for the approximate eigenvalues and eigenvectors of the pairs in \eqref{exactDbc}--\eqref{exactMbc} when an optimal spline subspace (or other types of outlier-free spline spaces) is used as a Galerkin discretization space, for any degree $p\geq1$.

\begin{remark} \label{periodic}
Other classical boundary conditions for the eigenvalue problem \eqref{pd1} are {periodic boundary conditions}. In this case, the discretization based on classical periodic B-splines on uniform grids gives rise to a circulant structure both for the stiffness and the mass matrix, and thus results in a full (explicit) knowledge of their spectrum; see, e.g.,~\cite{takacs2016approximation}.
\end{remark}

\subsection{Optimal spline subspaces}
We provide a concise overview of the optimal spline discretizations associated with \eqref{pd1} and presented in \cite{manni2022application}. We start by defining the optimal spline subspaces, followed by the corresponding variational formulation of the discrete systems. Lastly, we describe the construction of a B-spline-like basis for each subspace in terms of cardinal B-splines. For more details on these spaces, we refer the reader to \cite{manni2022application} and references therein.

Let $p,\nel\in\mathbb{N}^{\star}$. Following \cite{manni2022application} and using the same notation, we consider the vector of strictly increasing breakpoints
\begin{equation}\label{orgbreakp}
\bm{\tau}=( \tau_0=0,\tau_1,\ldots,\tau_{\nel-1},\tau_{\nel}=1),
\end{equation}
where $\nel$ is the number of intervals of $[0,1]$. We then introduce the standard spline space of maximal smoothness 
\begin{equation*}
\mathbb{S}_{p,\bm{\tau}} = \left\{ s\in\mathcal{C}^{p-1}([0,1]):\; s|_{\left[ \tau_i, \, \tau_{i+1} \right)}\in \mathbb{P}_{p},\ i=0,\ldots,\nel-1 \right\},
\end{equation*}
where $\mathbb{P}_{p}$ is the space of polynomials of degree less than or equal to $p$. A comprehensive exposition of the space $\mathbb{S}_{p,\bm{\tau}}$ and its B-spline basis  can be found in \cite{de1978practical,Lyche:2018,schumaker2007spline}.

Now, let us fix an integer number $\ell\in\{0,\ldots,p\}$. In the following, we will focus on specific subspaces of $\mathbb{S}_{p,\bm{\tau}}$ designed to selectively capture elements with certain vanishing derivatives at the boundary, precisely up to order~$\ell$:
\begin{align}
\mathbb{S}_{p,\bm{\tau},0}^{\ell} &= \left\{ s\in \mathbb{S}_{p,\bm{\tau}}:\;  s^{(\beta)}(0)=s^{(\beta)}(1)=0,\  0\leq \beta\leq \ell,\ \beta\;\text{even}    \right\},
\label{S0l} \\
\mathbb{S}_{p,\bm{\tau},1}^{\ell} &= \left\{ s\in \mathbb{S}_{p,\bm{\tau}}:\; s^{(\beta)}(0)=s^{(\beta)}(1)=0,\ 0\leq \beta\leq \ell,\ \beta\;\text{odd}    \right\},
\label{S1l} \\
\mathbb{S}_{p,\bm{\tau},2}^{\ell} &= \left\{ s\in \mathbb{S}_{p,\bm{\tau}}:\; s^{(\beta_0)}(0)=s^{(\beta_1)}(1)=0,\ 0\leq \beta_0,\beta_1\leq \ell,\ \beta_0\;\text{even},\  \beta_1\;\text{odd}  \right\}.
\label{S2l}
\end{align}
It can be easily verified that the continuous eigenvectors \eqref{exactDbc}--\eqref{exactMbc} of the problems \eqref{pd1} satisfy the boundary conditions in the spaces $\mathbb{S}_{p,\bm{\tau},\bi}^{\ell}$ for $\bi=0,1,2$, respectively. The optimal spline subspaces of dimension $n$ $\left(n\in\mathbb{N}^{\star}\right)$ associated with the boundary conditions \eqref{Dbc}--\eqref{Mbc} are obtained by taking $\ell=p$ and specific breakpoints. More precisely,
\begin{equation}\label{Sopti}
\mathbb{S}_{p,n,\bi}^{\opt}=\mathbb{S}_{p,{\bm{\tau}}_{p,n,\bi}^{\opt},\bi}^{p},\quad \bi=0,1,2,
\end{equation}
where the breakpoints $\bm{\tau}_{p,n,\bi}^{\opt}$, $\bi=0,1,2$ are specified as
\begin{align*}
\bm{\tau}_{p,n,0}^{\opt} &= \left\{
\begin{array}{ll}
\left(0,\frac{1}{n+1},\frac{2}{n+1},\ldots,\frac{n}{n+1},1 \right), & p \text{ odd}, \\[0.3cm]
\left(0,\frac{1/2}{n+1},\frac{3/2}{n+1},\ldots,\frac{n+1/2}{n+1},1 \right), \ \ & p \text{ even},\\
\end{array}
\right.
\\
\bm{\tau}_{p,n,1}^{\opt} &= \left\{
\begin{array}{ll}
\left(0,\frac{1/2}{n},\frac{3/2}{n},\ldots,\frac{n-1/2}{n},1 \right), \quad\quad & p \text{ odd}, \\[0.3cm]
\left(0,\frac{1}{n},\frac{2}{n},\ldots,\frac{n-1}{n},1 \right), & p \text{ even},\\
\end{array}
\right.
\\
\bm{\tau}_{p,n,2}^{\opt} &= \left\{
\begin{array}{ll}
\left(0,\frac{2}{2n+1},\frac{4}{2n+1},\ldots,\frac{2n}{2n+1},1 \right),\, & p \text{ odd}, \\[0.3cm]
\left(0,\frac{1}{2n+1},\frac{3}{2n+1},\ldots,\frac{2n-1}{2n+1},1 \right), & p \text{ even}.\\
\end{array}
\right. 
\end{align*}
In the Galerkin method, the approximate solutions of \eqref{pd1} with \eqref{Dbc}--\eqref{Mbc} are computed as follows:
\begin{equation}\label{app}
\text{Find}\quad (\lambda_{k,h},u_{k,h})\in\mathbb{R}\times\mathbb{S}_{p,n,\bi}^{\opt} \quad \text{such that}\quad   
K(u_{k,h},v)=\lambda_{k,h}M(u_{k,h},v),\quad \forall v\in\mathbb{S}_{p,n,\bi}^{\opt},
\end{equation}
where 
\begin{equation*}
M(u_{k,h},v)=\displaystyle\int_{0}^{1} u_{k,h}(x) v(x)\;{\dd}x,\quad
K(u_{k,h},v)=\displaystyle\int_{0}^{1} u_{k,h}'(x) v'(x)\;{\dd}x.
\end{equation*}
For well-posedness of the problem, the convergence of \eqref{app}, and the outlier-free property of the spaces \eqref{Sopti}, we refer the reader to \cite[Section~4]{manni2022application}.

Given a fixed $\bi\in\{0,1,2\}$, we have $\dim\left(\mathbb{S}_{p,n,\bi}^{\opt}\right)=n$. Let $B_{\bi}^p=\left\{N^p_{1,\bi},N^p_{2,\bi},\ldots,N^p_{n,\bi}\right\}$ denote a basis of the space $\mathbb{S}_{p,n,\bi}^{\opt}$ (our basis of interest will be specified later in Sections~\ref{sec:DBCBasis}--\ref{sec:MBCBasis}). The problem described by \eqref{app} can be reformulated as a discrete eigenvalue problem,
\begin{equation}\label{systfindimi}
\left(M_n^{p,\bi}\right)^{-1}K_n^{p,\bi}\; \bm{u}_{k,h}=\lambda_{k,h}\bm{u}_{k,h},
\end{equation}
where $\bm{u}_{k,h}$ represents the coefficient vector of $u_{k,h}$ in terms of the basis $B_{\bi}^p$. The matrices $M_n^{p,\bi}$ and  $K_n^{p,\bi}$ are called the mass matrix and the stiffness matrix, respectively, and are defined by
\begin{equation}\label{allmatrices}
M_n^{p,\bi}=\left[\displaystyle\int_0^1 N^p_{i,\bi}(x) N^p_{j,\bi}(x)\;{\dd}x\right]_{i,j=1}^n,\quad K_n^{p,\bi}=\left[\displaystyle\int_0^1 (N^p_{i,\bi})'(x) (N^p_{j,\bi})'(x)\;{\dd}x\right]_{i,j=1}^n.
\end{equation}

We will conclude our summary of the optimal spline subspaces presented in \cite{manni2022application} by introducing a B-spline-like basis for each space. To this end, we need to define the cardinal B-splines and their properties; see, e.g., \cite{de1978practical,Lyche:2018} for more details.
Let $\mathcal{N}_p: \mathbb{R}\longrightarrow\mathbb{R} $ be the cardinal B-spline of degree $p$ recursively defined as
\begin{equation}\label{eq:cardinal-B-spline-1}
\mathcal{N}_0(t)=\left\{
\begin{array}{ll}
1, & t\in[0,1),\\[0.05cm]
0, & \text{otherwise},
\end{array}
\right.
\end{equation}
and
\begin{equation}\label{eq:cardinal-B-spline-2}
\mathcal{N}_p(t)=\frac{t}{p}\mathcal{N}_{p-1}(t)+\frac{p+1-t}{p}\mathcal{N}_{p-1}(t-1),\quad t\in\mathbb{R},\quad p\geq 1.
\end{equation}
The function class $\left(\mathcal{N}_p\right)_{p\geq0}$ possesses the following properties.
\begin{itemize}
\item Smoothness and local support:
\begin{equation}\label{suppandregucb}
\mathcal{N}_p\in \mathcal{C}^{p-1}(\mathbb{R}),\quad \supp\left(\mathcal{N}_p\right)= [0,p+1].
\end{equation}
\item Symmetry:
\begin{equation}\label{symmetrycb}
\mathcal{N}_p^{(r)}\left(\frac{p+1}{2}+t\right)=(-1)^r  \mathcal{N}_p^{(r)}\left(\frac{p+1}{2}-t\right),\quad r\leq p.
\end{equation}
\item The $L^2$ inner products:
\begin{equation}\label{innerprodcb}
\displaystyle\int_{\mathbb{R}}\mathcal{N}_{p_1}^{(r_1)}(t)\mathcal{N}_{p_2}^{(r_2)}(t+\rho)\;{\dd}t =(-1)^{r_1}\mathcal{N}_{p_1+p_2+1}^{(r_1+r_2)}(p_1+1+\rho)
=(-1)^{r_2}\mathcal{N}_{p_1+p_2+1}^{(r_1+r_2)}(p_2+1-\rho),
\end{equation}
for every $\rho\in\mathbb{R}$ and every $r_1\leq p_1$ and $r_2\leq p_2$.
\end{itemize}
We are now ready to discuss the construction of the B-spline-like basis $B_{\bi}^p$ corresponding to $\mathbb{S}_{p,n,\bi}^{\opt}$ for $\bi=0,1,2$ in the following subsections. For each basis we present its extraction formula from cardinal B-splines. For a more in-depth understanding, we refer the reader to \cite{DiVona:2019,manni2022application}.

\subsubsection{Dirichlet boundary conditions}\label{sec:DBCBasis}
Let us consider the first optimal spline subspace $\mathbb{S}_{p,n,0}^{\opt}$ corresponding to the boundary condition \eqref{Dbc} of our eigenvalue problem \eqref{pd1}. The B-spline-like basis of this space is denoted by $B_0^p=\left\{N^{p}_{i,0},\ i=1,\ldots,n\right\}$ and its definition depends on the parity of $p$ as follows.

In the case $p$ odd, the basis is computed as
\begin{equation}\label{basis0podd}
\begin{bmatrix}
N^p_{1,0}\\
N^p_{2,0}\\
\vdots\\
N^p_{n,0}\\
\end{bmatrix}= \begin{bmatrix}
\underbrace{\overbrace{\cdots\bigg\vert L^0_{n} \bigg\vert L^0_{n}\bigg\vert}^{\frac{p+1}{2}}  I_{n}  \overbrace{ \bigg\vert  R^0_{n}   \bigg\vert R^0_{n}    \bigg\vert   \cdots  }^{\frac{p+1}{2}}  }_{n+p+1}
\end{bmatrix}\begin{bmatrix}
N^p_{-p}\\
\vdots\\
N^p_{0}\\
N^p_{1}\\
\vdots\\
N^p_{n}\\
\end{bmatrix},
\end{equation}
where the set $\left\{ N_{i}^p,\ i=-p,\ldots,n  \right\}$
is derived from the cardinal B-spline $\mathcal{N}_p$ in \eqref{eq:cardinal-B-spline-2} as
\begin{equation}\label{bsplines0odd}
N_{i}^p(x)=\mathcal{N}_p\left((n+1)x-i\right),\quad i=-p,\ldots,n,
\end{equation}
and the matrices $L^0_n$ and $R^0_n$ are composed of the identity matrix $I_n$, the exchange matrix $J_n$, and the zero column vector $O_n$ as
\begin{equation*}
L^0_{n}=\begin{bmatrix}
I_{n} \bigg\vert O_{n} \bigg\vert -J_{n} \bigg\vert O_{n}
\end{bmatrix},\quad R^0_{n}=\begin{bmatrix}
O_{n} \bigg\vert -J_{n} \bigg\vert O_{n} \bigg\vert I_{n}
\end{bmatrix}.
\end{equation*}
In the case $p$ even, the basis is computed as
\begin{equation}\label{basis0peven}
\begin{bmatrix}
N^p_{1,0}\\
N^p_{2,0}\\
\vdots\\
N^p_{n,0}\\
\end{bmatrix}= \begin{bmatrix}
\underbrace{\overbrace{\cdots\bigg\vert L^0_{n} \bigg\vert L^0_{n}\bigg\vert}^{\frac{p}{2}+1}  I_{n}  \overbrace{ \bigg\vert  R^0_{n}  \bigg\vert R^0_{n}    \bigg\vert   \cdots  }^{\frac{p}{2}+1}  }_{n+p+2}
\end{bmatrix}\begin{bmatrix}
N^p_{-p}\\
\vdots\\
N^p_{0}\\
N^p_{1}\\
\vdots\\
N^p_{n+1}\\
\end{bmatrix},
\end{equation}
where the set $\left\{ N_{i}^p,\ i=-p,\ldots,n+1  \right\}$
is defined by
\begin{equation}\label{bsplines0even}
N_{i}^p(x)=\mathcal{N}_p\left((n+1)x-\left(i-\frac{1}{2}\right)\right),\quad i=-p,\ldots,n+1.
\end{equation}

\subsubsection{Neumann boundary conditions}\label{sec:NBCBasis}
Now, we turn our attention to the second optimal spline subspace $\mathbb{S}_{p,n,1}^{\opt}$, which is associated with the boundary condition \eqref{Nbc}. The B-spline-like basis of this space is denoted by $B_1^p=\left\{N^{p}_{i,1},\ i=1,\ldots,n\right\}$ and determined as follows, with a dependence on the parity of $p$.

In the case $p$ odd, the basis is computed as
\begin{equation}\label{basis1podd}
\begin{bmatrix}
N^p_{1,1}\\
N^p_{2,1}\\
\vdots\\
N^p_{n,1}\\
\end{bmatrix}= \begin{bmatrix}
\underbrace{\overbrace{\cdots\bigg\vert J_{n} \bigg\vert I_{n} \bigg\vert J_{n}\bigg\vert}^{\frac{p+1}{2}}  I_{n}  \overbrace{ \bigg\vert  J_{n}   \bigg\vert I_n \bigg\vert J_{n}    \bigg\vert   \cdots  }^{\frac{p+1}{2}}  }_{n+p+1}
\end{bmatrix}\begin{bmatrix}
N^p_{-p}\\
\vdots\\
N^p_{0}\\
N^p_{1}\\
\vdots\\
N^p_{n}\\
\end{bmatrix},
\end{equation}
where the set $\left\{ N_{i}^p,\ i=-p,\ldots,n  \right\}$
is given by
\begin{equation}\label{bsplines1odd}
N_{i}^p(x)=\mathcal{N}_p\left(nx-\left(i-\frac{1}{2}\right)\right),\quad i=-p,\ldots,n.
\end{equation}
In the case $p$ even, the basis is computed as
\begin{equation}\label{basis1peven}
\begin{bmatrix}
N^p_{1,1}\\
N^p_{2,1}\\
\vdots\\
N^p_{n,1}\\
\end{bmatrix}= \begin{bmatrix}
\underbrace{\overbrace{\cdots\bigg\vert J_{n} \bigg\vert I_{n} \bigg\vert J_{n}\bigg\vert}^{\frac{p}{2}}  I_{n}  \overbrace{ \bigg\vert  J_{n}   \bigg\vert I_{n}\bigg\vert J_{n}    \bigg\vert    \cdots  }^{\frac{p}{2}}  }_{n+p}
\end{bmatrix}\begin{bmatrix}
N^p_{-p}\\
\vdots\\
N^p_{0}\\
N^p_{1}\\
\vdots\\
N^p_{n-1}\\
\end{bmatrix},
\end{equation}
where the set $\left\{ N_{i}^p,\ i=-p,\ldots,n-1  \right\}$
is defined by
\begin{equation}\label{bsplines1even}
N_{i}^p(x)=\mathcal{N}_p\left(nx-i\right),\quad i=-p,\ldots,n-1.
\end{equation}

\subsubsection{Mixed boundary conditions}\label{sec:MBCBasis}
Finally, let us consider the last optimal spline subspace $\mathbb{S}_{p,n,2}^{\opt}$. The B-spline-like basis of this space is denoted by $B_2^p=\left\{N^{p}_{i,2},\ i=1,\ldots,n\right\}$ and determined as follows; there is again a dependence on the parity of $p$.

In the case $p$ odd, the basis is computed as
\begin{equation}\label{basis2podd}
\begin{bmatrix}
N^p_{1,2}\\
N^p_{2,2}\\
\vdots\\
N^p_{n,2}\\
\end{bmatrix}= \begin{bmatrix}
\underbrace{\overbrace{\cdots\bigg\vert L^2_{n} \bigg\vert -L^2_{n}\bigg\vert L^2_{n}\bigg\vert}^{\frac{p+1}{2}}  I_{n}  \overbrace{ \bigg\vert  R^2_{n}   \bigg\vert-R^2_n\bigg\vert R^2_{n}    \bigg\vert    \cdots  }^{\frac{p+1}{2}}  }_{n+p+1}
\end{bmatrix}\begin{bmatrix}
N^p_{-p}\\
\vdots\\
N^p_{0}\\
N^p_{1}\\
\vdots\\
N^p_{n}\\
\end{bmatrix}, 
\end{equation}
where the set $\left\{ N_{i}^p,\ i=-p,\ldots,n  \right\}$
is defined by
\begin{equation}\label{bsplines2odd}
N_{i}^p(x)=\mathcal{N}_p\left(\frac{2n+1}{2}x-i\right),\quad i=-p,\ldots,n,
\end{equation}
and the matrices $L^2_{n}$ and $R^2_{n}$ are defined by
\begin{equation*}
L^2_{n}=\begin{bmatrix}
-I_{n} \bigg\vert  -J_{n}  \bigg\vert   O_{n}
\end{bmatrix},\quad R^2_{n}=\begin{bmatrix}
J_{n} \bigg\vert   O_{n}  \bigg\vert   -I_{n}
\end{bmatrix}.
\end{equation*}
In the case $p$ even, the basis is computed as
\begin{equation}\label{basis2peven}
\begin{bmatrix}
N^p_{1,2}\\
N^p_{2,2}\\
\vdots\\
N^p_{n,2}\\
\end{bmatrix}= \begin{bmatrix}
\underbrace{\overbrace{\cdots\bigg\vert L^2_{n} \bigg\vert -L^2_{n}\bigg\vert L^2_{n}\bigg\vert}^{\frac{p}{2}+1}  I_{n}  \overbrace{ \bigg\vert  R^2_{n}   \bigg\vert-R^2_n\bigg\vert R^2_{n}    \bigg\vert   \cdots  }^{\frac{p}{2}}  }_{n+p+1}
\end{bmatrix}\begin{bmatrix}
N^p_{-p}\\
\vdots\\
N^p_{0}\\
N^p_{1}\\
\vdots\\
N^p_{n}\\
\end{bmatrix},
\end{equation}
where the set $\left\{ N_{i}^p,\ i=-p,\ldots,n  \right\}$
is given by
\begin{equation}\label{bsplines2even}
N_{i}^p(x)=\mathcal{N}_p\left(\frac{2n+1}{2}x-\left(i-\frac{1}{2}\right)\right),\quad i=-p,\ldots,n. 
\end{equation}

\subsection{Structured matrices of Toeplitz-minus-Hankel or Toeplitz-plus-Hankel form}\label{sec:toeplitz+hankel}
We describe three known results regarding closed-form expressions for eigenvalues and eigenvectors of structured matrices of Toeplitz-minus-Hankel or Toeplitz-plus-Hankel form. Further details and other types of matrices can be found in \cite{deng2021analytical} (see also \cite[Table~1.2]{bolten2023note}).
All theorems presented here are taken from \cite{deng2021analytical}, adjusted to align more effectively with the notation of the GLT symbol presented in \cite{garoni2017generalized}. Additionally, various matrix algebras can be found in \cite{bini1983spectral,bozzo1995use,di1995matrix}, illustrating the connections between these fields.

Suppose a given integer number $n\in\mathbb{N}^{\star}$ and a given function $a$ belonging to the Lebesgue space $L^1([-\pi,\pi])$. The Toeplitz matrix of size $n$ generated by $a$ is defined by
\begin{equation*}
T_n(a)=\left[a_{i-j}\right]_{i,j=1}^n=\begin{bmatrix}
a_0&a_{-1}&a_{-2}&\cdots & \cdots& a_{-(n-1)} \\
a_1&\ddots&\ddots&\ddots & &\vdots \\
a_2&\ddots&\ddots&\ddots&\ddots &\vdots\\
\vdots&\ddots&\ddots&\ddots&\ddots&a_{-2}\\
\vdots& &\ddots&\ddots&\ddots&a_{-1}\\
a_{n-1}&\cdots&\cdots&a_2&a_1&a_{0}\\
\end{bmatrix},
\end{equation*}
where $a_k$ represents the $k$-th Fourier coefficient of  $a$, given by 
\begin{equation*}
a_k=\frac{1}{2\pi}\displaystyle\int_{-\pi}^{\pi}a(\theta)e^{-{\ii}k\theta}\;{\dd}\theta,\quad k\in\mathbb{Z},
\end{equation*}
with $\ii^2={-1}$.

Let $p$ be a positive integer such that $n \geq p+1$. Given the vector $\bm{\alpha} = (\alpha_0, \alpha_1, \ldots, \alpha_p) \in \mathbb{R}^{p+1}$, we define the symmetric Toeplitz matrix of size $n$ associated with $\bm{\alpha}$ as follows
\begin{equation}\label{Toplitzalpha}
T_n^{\bm{\alpha}}=\begin{bmatrix}
\alpha_0&\alpha_1&\cdots&\alpha_p& 0 & & & &\\
\alpha_1&\alpha_0&\alpha_1&\cdots&\alpha_p& & & &\\
\vdots&\alpha_1&\alpha_0&\alpha_1&\cdots&\alpha_p& &&\\
\alpha_p&\cdots&\alpha_1&\alpha_0&\alpha_1&\cdots&\alpha_p&&\\
0 &\ddots&&\ddots&\ddots&\ddots&&\ddots& 0\\
&& \alpha_p&\cdots&\alpha_1&\alpha_0&\alpha_1&\cdots&\alpha_p\\
& & &\alpha_p&\cdots&\alpha_1&\alpha_0&\alpha_1&\vdots\\
&&  & &\alpha_p&\cdots&\alpha_1&\alpha_0&\alpha_1\\
& & & & 0 &\alpha_p&\cdots&\alpha_1&\alpha_0\\
\end{bmatrix}.
\end{equation}
Through straightforward calculations, we can deduce the function that generates this matrix. More precisely, we have
\begin{equation*}
T_n^{\bm{\alpha}}=T_{n}(g^{\bm{\alpha}}_p),
\end{equation*}
where 
\begin{equation}\label{g}
g_p^{\bm{\alpha}}(\theta)=\alpha_0+2\displaystyle\sum_{k=1}^p \alpha_k\cos(k\theta),\quad \forall  \theta \in [0,\pi].
\end{equation}

Before delving into the results, it is essential to introduce two distinct types of Hankel matrices of size $n$ that are directly linked to the vector $\bm{\alpha}$:
\begin{equation}\label{hankelmatricesdif}
H_n^{\bm{\alpha},1}=\begin{bmatrix}
\alpha_1&\alpha_2&\alpha_3&\cdots& \alpha_p&0 & & \\
\alpha_2&\alpha_3&\cdots&\alpha_p& & & & \\
\alpha_3&\cdots&\alpha_p& & & & \\
\vdots&\udots&&&&&&\\
\alpha_p& & & & & & &0 \\
 0& & & & & & &\alpha_p\\
 && & &&&\udots&\vdots\\
 &&& & &\alpha_p&\cdots&\alpha_3\\ 
 &&& &\alpha_p&\cdots&\alpha_3&\alpha_2\\
& &0 & \alpha_p&\cdots&\alpha_3&\alpha_2&\alpha_1\\
\end{bmatrix},\quad  
H_n^{\bm{\alpha},2}=\begin{bmatrix}
\alpha_2&\alpha_3&\alpha_4&\cdots& \alpha_p&0 & & \\
\alpha_3&\alpha_4&\cdots&\alpha_p& & & & \\
\alpha_4&\cdots&\alpha_p& & & & \\
\vdots&\udots&&&&&&\\
\alpha_p& & & & & & &0 \\
 0& & & & & & &\alpha_p\\
 && & &&&\udots&\vdots\\
 &&& & &\alpha_p&\cdots&\alpha_4\\ 
 &&& &\alpha_p&\cdots&\alpha_4&\alpha_3\\
& &0 & \alpha_p&\cdots&\alpha_4&\alpha_3&\alpha_2\\
\end{bmatrix}.
\end{equation}
Note that the initial element of the Hankel matrix $H_n^{\bm{\alpha},1}$ corresponds to the second element of the Toeplitz matrix \eqref{Toplitzalpha}. Similarly, the Hankel matrix $H_n^{\bm{\alpha},2}$ begins with the third element of the same Toeplitz matrix.

Now we have the necessary background to introduce the following theorems; their proofs can be found in~\cite{deng2021analytical}.
\begin{theorem}\label{closed1}
Let $n,p\in\mathbb{N}^{\star}$ such that $n\geq p+1$. Then, the eigenpairs of the matrix $T_n^{\bm{\alpha}}-H_n^{\bm{\alpha},2}$ can be expressed as
\begin{equation*}
\lambda_j\left(T_n^{\bm{\alpha}}-H_n^{\bm{\alpha},2}\right)=g_p^{\bm{\alpha}}\left(\frac{j\pi}{n+1}\right),\quad j=1,\ldots, n,
\end{equation*}
and
\begin{equation*}
u_{i,j}\left(T_n^{\bm{\alpha}}-H_n^{\bm{\alpha},2}\right)=\sqrt{\frac{2}{n+1}}\sin\left(\frac{ij\pi}{n+1}\right),\quad i,j=1,\ldots, n,
\end{equation*}
where $g_p^{\bm{\alpha}}$ is given by \eqref{g}.
\end{theorem}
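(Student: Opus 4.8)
\textbf{Proof proposal for Theorem~\ref{closed1}.}

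The plan is to diagonalize the matrix $T_n^{\bm{\alpha}}-H_n^{\bm{\alpha},2}$ explicitly by verifying that the proposed sine vectors are eigenvectors with the claimed eigenvalues. Let $\bm{s}_j$ denote the vector with components $s_{i,j}=\sin\!\left(\frac{ij\pi}{n+1}\right)$, $i=1,\dots,n$. These are exactly the eigenvectors of the classical second-difference matrix and, more relevantly, they are a common eigenbasis of the whole $\tau$ algebra. First I would recall (or re-derive from the product-to-sum formula) the key identity that for any integer $k$,
\begin{equation*}
\sum_{\text{shift by }k}:\qquad s_{i-k,j}+s_{i+k,j}=2\cos\!\left(\frac{k j\pi}{n+1}\right)s_{i,j},
\end{equation*}
valid for $1\le i\le n$ once one uses the ``reflection'' conventions $s_{0,j}=0$ and $s_{n+1,j}=0$, i.e.\ the odd/antiperiodic extension of the sine. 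This identity is the engine: it says that a symmetric Toeplitz band acts on $\bm{s}_j$ as multiplication by $g_p^{\bm\alpha}\!\left(\frac{j\pi}{n+1}\right)$ \emph{provided} the boundary terms that spill outside the index range $\{1,\dots,n\}$ are correctly accounted for.

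The heart of the argument, and the step I expect to be the main obstacle, is the bookkeeping of those boundary terms. When $T_n^{\bm{\alpha}}$ is applied to $\bm{s}_j$, the rows near the top and bottom are ``missing'' the contributions $\alpha_k s_{i-k,j}$ or $\alpha_k s_{i+k,j}$ with indices $i-k\le 0$ or $i+k\ge n+1$. Using the odd extension about $0$ (so $s_{-m,j}=-s_{m,j}$) and the odd extension about $n+1$ (so $s_{n+1+m,j}=-s_{n+1-m,j}$), each missing term can be rewritten as a $\pm$ copy of an in-range sine value. The claim to be checked is precisely that the collection of these correction terms, after the sign flips, reproduces exactly the negative of the Hankel matrix $H_n^{\bm{\alpha},2}$ acting on $\bm{s}_j$ — so that $(T_n^{\bm\alpha}-H_n^{\bm{\alpha},2})\bm{s}_j = g_p^{\bm\alpha}\!\left(\frac{j\pi}{n+1}\right)\bm{s}_j$ with no leftover. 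Concretely, I would write $\big(T_n^{\bm\alpha}\bm{s}_j\big)_i = g_p^{\bm\alpha}\!\left(\tfrac{j\pi}{n+1}\right)s_{i,j} - \big(\text{top correction}\big)_i - \big(\text{bottom correction}\big)_i$, then identify the top correction with the upper-left triangular block of $H_n^{\bm{\alpha},2}\bm s_j$ (coming from reflection about $0$) and the bottom correction with the lower-right triangular block (reflection about $n+1$), matching the anti-diagonal entries $\alpha_2,\alpha_3,\dots,\alpha_p$ exactly as displayed in \eqref{hankelmatricesdif}. The condition $n\ge p+1$ guarantees the band width $p$ does not cause the two reflections to interact, i.e.\ the top and bottom corrections have disjoint support, which is what makes the identification clean.

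Once the eigen-equation is established for each $j=1,\dots,n$, it remains to note that the vectors $\bm s_j$ are mutually orthogonal (a standard discrete-sine-transform fact) with $\|\bm s_j\|^2=(n+1)/2$, so normalizing gives $u_{i,j}=\sqrt{\tfrac{2}{n+1}}\sin\!\left(\tfrac{ij\pi}{n+1}\right)$; since the matrix is real symmetric and we have produced $n$ orthogonal eigenvectors, these are \emph{all} the eigenpairs, and the eigenvalue formula follows from \eqref{g}. I would also double-check the edge case where $p$ is small relative to the reflection (e.g.\ $n=p+1$), where the two correction blocks just barely fit, to confirm the disjoint-support claim is tight. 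Finally, I would remark that this is exactly the statement that $T_n^{\bm\alpha}-H_n^{\bm\alpha,2}$ lies in the sine-type $\tau$ algebra and is therefore diagonalized by the type-I discrete sine transform, so an alternative (and perhaps cleaner) route is to invoke the known spectral decomposition of that algebra directly and merely verify that the given band-plus-Hankel combination is the algebra element with symbol $g_p^{\bm\alpha}$.
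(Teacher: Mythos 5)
Your proposal is correct. Note that the paper does not prove Theorem~\ref{closed1} itself; it defers to \cite{deng2021analytical} (with an implicit-eigenvalue variant in \cite{bini1983spectral}), and your direct verification is exactly the standard argument used there: extend $s_{i,j}=\sin\bigl(\tfrac{ij\pi}{n+1}\bigr)$ to all integers $i$, apply the product-to-sum identity so that the full (unclipped) band acts as multiplication by $g_p^{\bm\alpha}\bigl(\tfrac{j\pi}{n+1}\bigr)$, and convert the out-of-range terms via the odd reflections about $0$ and $n+1$ into precisely the two triangular corners of $H_n^{\bm{\alpha},2}$ (the reflection about $0$ producing entries $\alpha_{i+m}$, the one about $n+1$ producing $\alpha_{2(n+1)-i-m}$, which explains why the Hankel part starts at $\alpha_2$). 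Your observation that $n\geq p+1$ is what keeps the two corner blocks disjoint is the right and only delicate point, and the orthogonality/normalization step completes the spectral decomposition since the matrix is real symmetric.
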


\begin{theorem}\label{closed3}
Let $n,p\in\mathbb{N}^{\star}$ such that $n\geq p+1$. Then, the eigenpairs of the matrix $T_n^{\bm{\alpha}}+H_n^{\bm{\alpha},1}$ can be expressed as
\begin{equation*}
\lambda_j\left(T_n^{\bm{\alpha}}+H_n^{\bm{\alpha},1}\right)=g_p^{\bm{\alpha}}\left(\frac{(j-1)\pi}{n}\right),\quad j=1,\ldots, n,
\end{equation*}
and
\begin{equation*}
u_{i,j}\left(T_n^{\bm{\alpha}}+H_n^{\bm{\alpha},1}\right)=\sqrt{\frac{2}{n}}\;c_j\cos\left(\frac{(j-1)\pi}{n}\left(i-\frac{1}{2}\right)\right),\quad c_j=\left\{\begin{array}{ll}
    \frac{1}{\sqrt{2}},& j=1, \\
    1,& \text{otherwise},
\end{array}
\right.\quad i,j=1,\ldots, n,
\end{equation*}
where $g_p^{\bm{\alpha}}$ is given by \eqref{g}.
\end{theorem}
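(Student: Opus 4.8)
The statement to be proved is Theorem~\ref{closed3} (the last complete statement in the excerpt): a closed-form for the eigenpairs of $T_n^{\bm{\alpha}}+H_n^{\bm{\alpha},1}$.

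\medskip

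\textbf{Plan of proof.} The plan is to verify directly that the candidate vectors $\bm{u}_j$ with components $u_{i,j}=\sqrt{2/n}\,c_j\cos\!\big(\tfrac{(j-1)\pi}{n}(i-\tfrac12)\big)$ are eigenvectors of $T_n^{\bm\alpha}+H_n^{\bm\alpha,1}$ with eigenvalue $g_p^{\bm\alpha}\big(\tfrac{(j-1)\pi}{n}\big)$, and that they form an orthonormal set (hence a complete one, by counting). The key algebraic device is that these vectors are, up to normalization, the eigenvectors of the well-known cosine-type transform ($\mathrm{DCT}$-II), which simultaneously diagonalizes the commutative algebra of matrices of the form ``symmetric Toeplitz plus Hankel with the Hankel shifted by one''; this is exactly the $\tau$-type algebra referenced before the theorem via \cite{bozzo1995use,bini1983spectral,di1995matrix}. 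I would first fix $\theta_j=\tfrac{(j-1)\pi}{n}$ and set $v_i=\cos(\theta_j(i-\tfrac12))$, then compute the $i$-th component of $(T_n^{\bm\alpha}+H_n^{\bm\alpha,1})\bm v$ and show it equals $g_p^{\bm\alpha}(\theta_j)\,v_i$.

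\medskip

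\textbf{Key steps, in order.} First, using the band structure ($\alpha_k=0$ for $k>p$) and the hypothesis $n\ge p+1$, write
\[
\big((T_n^{\bm\alpha}+H_n^{\bm\alpha,1})\bm v\big)_i=\sum_{k=-p}^{p}\alpha_{|k|}\,v_{i+k}+\sum_{k=1}^{p}\alpha_k\big(v_{\sigma_1(i,k)}+\cdots\big),
\]
where the Hankel part contributes reflected indices. Second, use the cosine addition formula $v_{i+k}+v_{i-k}=2\cos(k\theta_j)\cos(\theta_j(i-\tfrac12))=2\cos(k\theta_j)v_i$ to collapse the Toeplitz part of the interior rows to $g_p^{\bm\alpha}(\theta_j)v_i$. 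Third — the crux — check the boundary rows (the first $p$ and last $p$ rows), where both the Toeplitz band and the Hankel band are truncated: here the Hankel entries $\alpha_k$ are precisely positioned (the $H_n^{\bm\alpha,1}$ block starts at $\alpha_1$, i.e.\ at the second Toeplitz entry) so that the reflected cosine terms $v_{1-m}$ that ``fall off'' the Toeplitz band are exactly reproduced by the Hankel contribution, because $\cos(\theta_j(1-m-\tfrac12))=\cos(\theta_j(m-\tfrac12))=v_m$ — that is, the symmetry $v_{-r+1}=v_r$ of the half-integer cosine grid is what makes the $+H^{\bm\alpha,1}$ (as opposed to $-H^{\bm\alpha,2}$, cf.\ Theorem~\ref{closed1}) the correct correction. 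One does the same reflection argument at the far boundary using $v_{n+r}=v_{n+1-r}$. Fourth, conclude that every candidate vector is an eigenvector with the claimed eigenvalue; then verify orthonormality of $\{\bm u_j\}_{j=1}^n$ via the standard discrete cosine orthogonality relations $\sum_{i=1}^n\cos(\theta_j(i-\tfrac12))\cos(\theta_{j'}(i-\tfrac12))=\tfrac{n}{2}c_j^{-2}\delta_{jj'}$, which also pins down the normalizing constants $c_j$ (the factor $1/\sqrt2$ for $j=1$ coming from the constant vector). Since $n$ orthonormal eigenvectors exhaust $\mathbb{R}^n$, the list is complete and the spectrum is as stated.

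\medskip

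\textbf{Main obstacle.} The routine interior computation is immediate; the real work is the bookkeeping at the two boundary strips, matching the truncated Toeplitz band against the truncated Hankel band index-by-index and invoking the correct reflection symmetry of the half-integer cosine grid. Getting the reflection offsets exactly right — so that the ``spill-over'' of the Toeplitz part is cancelled rather than doubled, which is precisely the difference between needing $+H_n^{\bm\alpha,1}$ here and $-H_n^{\bm\alpha,2}$ in Theorem~\ref{closed1} — is the step most prone to sign and index-shift errors, so I would carry it out carefully for a generic boundary row $i\in\{1,\dots,p\}$, treating the terms with $k\ge i$ separately from those with $k<i$, and then invoke the centrosymmetry of the matrix and of $\bm v$ to get the last $p$ rows for free. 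Since the theorem is quoted from \cite{deng2021analytical}, an acceptable alternative is simply to cite that reference; but the direct verification above is short and self-contained, so I would include it.
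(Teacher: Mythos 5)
Your proposal is correct. Note, however, that the paper does not prove Theorem~\ref{closed3} at all: it is quoted from \cite{deng2021analytical} (Theorem~2.4 there, up to normalization of the eigenvectors), and the paper's ``proof'' consists of that citation plus the remark that the matrix lies in the algebra $\tau_n(1,1)$. Your self-contained verification is the standard argument and is sound: the interior rows collapse to $g_p^{\bm{\alpha}}(\theta_j)v_i$ via $v_{i-k}+v_{i+k}=2\cos(k\theta_j)v_i$; at the top boundary rows $i\le p$ the Toeplitz truncation removes exactly $\sum_{k=i}^{p}\alpha_k v_{i-k}$, and since $v_{i-k}=v_{k-i+1}$ by the half-integer reflection $v_{1-m}=v_m$, this is precisely $\sum_{k=i}^{p}\alpha_k v_{k-i+1}=(H_n^{\bm{\alpha},1}\bm{v})_i$ (and analogously at the bottom via $v_{n+m}=v_{n+1-m}$, the two corners not interacting because $n\ge p+1$); completeness then follows from the DCT-II orthogonality relations, which also fix $c_1=1/\sqrt{2}$. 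One small wording quibble: the Hankel block does not ``cancel a spill-over'' but rather restores the terms that the finite Toeplitz band is missing relative to the bi-infinite convolution --- but the mechanism you identify, including why the sign is $+H_n^{\bm{\alpha},1}$ here versus $-H_n^{\bm{\alpha},2}$ in Theorem~\ref{closed1}, is exactly right.
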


\begin{theorem}\label{closed2}
Let $n,p\in\mathbb{N}^{\star}$ such that $n\geq p+1$. Then, the eigenpairs of the matrix $T_n^{\bm{\alpha}}-H_n^{\bm{\alpha},1}$ can be expressed as
\begin{equation*}
\lambda_j\left(T_n^{\bm{\alpha}}-H_n^{\bm{\alpha},1}\right)=g_p^{\bm{\alpha}}\left(\frac{j\pi}{n}\right),\quad j=1,\ldots, n,
\end{equation*}
and
\begin{equation*}
u_{i,j}\left(T_n^{\bm{\alpha}}-H_n^{\bm{\alpha},1}\right)=\sqrt{\frac{2}{n}}\;c_j\sin\left(\frac{j\pi}{n}\left(i-\frac{1}{2}\right)\right),\quad  c_j=\left\{\begin{array}{ll}
    \frac{1}{\sqrt{2}},& j=n, \\
    1,& \text{otherwise},
\end{array}
\right. \quad i,j=1,\ldots, n,
\end{equation*}
where $g_p^{\bm{\alpha}}$ is given by \eqref{g}.
\end{theorem}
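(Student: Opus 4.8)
The plan is to verify directly that the claimed vectors are eigenvectors and that they form a complete orthogonal system, so that, $T_n^{\bm{\alpha}}-H_n^{\bm{\alpha},1}$ being real symmetric, they exhaust the spectrum. Fix $j\in\{1,\dots,n\}$, set $\theta_j=\tfrac{j\pi}{n}$, and extend the candidate eigenvector to the bi-infinite sequence $v_\ell=\sin\!\bigl(\theta_j(\ell-\tfrac12)\bigr)$, $\ell\in\mathbb{Z}$. Two elementary reflection identities will carry the argument: $v_{1-\ell}=-v_\ell$ (anti-symmetry about $\ell=\tfrac12$), and, because $\theta_j n=j\pi$, also $v_{2n+1-\ell}=-v_\ell$ (anti-symmetry about $\ell=n+\tfrac12$).

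First I would record the ``symbol identity'': since $g_p^{\bm{\alpha}}(\theta)=\alpha_0+2\sum_{k=1}^p\alpha_k\cos(k\theta)$, expanding $v_\ell$ into exponentials yields $\sum_{k=-p}^{p}\alpha_{|k|}\,v_{\ell-k}=g_p^{\bm{\alpha}}(\theta_j)\,v_\ell$ for every $\ell\in\mathbb{Z}$. For $i\in\{1,\dots,n\}$ the $i$-th entry of $T_n^{\bm{\alpha}}\bm{u}_j$ is exactly this sum truncated to columns $1,\dots,n$, hence it equals $g_p^{\bm{\alpha}}(\theta_j)v_i$ minus a ``left defect'' $\sum_{\ell\le 0}\alpha_{i-\ell}v_\ell$ (present only for $i\le p$) and a ``right defect'' $\sum_{\ell\ge n+1}\alpha_{\ell-i}v_\ell$ (present only for $i\ge n-p+1$). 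Applying $v_{1-\ell}=-v_\ell$ to the left defect and $v_{2n+1-\ell}=-v_\ell$ to the right defect, and re-indexing, turns each defect into minus the corresponding corner block of $H_n^{\bm{\alpha},1}$ acting on $\bm{u}_j$; the hypothesis $n\geq p+1$ is precisely what guarantees that the top-left band ($h_k=\alpha_k$, $k\le p$) and the bottom-right band of $H_n^{\bm{\alpha},1}$ do not overlap, so no entry is counted twice. Combining, $(T_n^{\bm{\alpha}}-H_n^{\bm{\alpha},1})\bm{u}_j=g_p^{\bm{\alpha}}(\theta_j)\bm{u}_j$.

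Finally I would settle completeness and normalization. The vectors $\bm{u}_j$, $j=1,\dots,n$, are (up to scaling) the columns of a discrete sine transform, hence mutually orthogonal; a direct computation gives $\|\bm{u}_j\|_2^2=\tfrac n2$ for $j<n$ and $\|\bm{u}_n\|_2^2=n$ (using $\sin(\pi(i-\tfrac12))=(-1)^{i-1}$), which is exactly the origin of the factor $c_j$. Since we have produced an orthogonal basis of eigenvectors of the symmetric matrix $T_n^{\bm{\alpha}}-H_n^{\bm{\alpha},1}$ together with the asserted eigenvalues, the theorem follows. I expect the only delicate points to be the index bookkeeping that matches the two truncation defects with the two corner blocks of $H_n^{\bm{\alpha},1}$ (where $n\geq p+1$ enters) and the separate treatment of the endpoint mode $j=n$, where $\theta_n=\pi$; one may alternatively bypass the orthogonality computation altogether by invoking the known spectral decomposition of the relevant trigonometric ($\tau$-)matrix algebra from \cite{bozzo1995use,di1995matrix}.
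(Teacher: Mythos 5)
Your proof is correct. Note first that the paper does not prove this theorem at all: it is imported verbatim (up to eigenvector normalization) from Deng's work, cited as \cite[Theorem~2.2]{deng2021analytical}, and the surrounding remarks only observe that $T_n^{\bm{\alpha}}-H_n^{\bm{\alpha},1}$ lives in the algebra $\tau_n(-1,-1)$ diagonalized by the stated sine vectors. Your direct verification is therefore a genuinely self-contained alternative, and every step checks out: the extension $v_\ell=\sin(\theta_j(\ell-\tfrac12))$ with the two reflection identities $v_{1-\ell}=-v_\ell$ and $v_{2n+1-\ell}=-v_\ell$ (the latter using $2n\theta_j=2j\pi$) is exactly the right device; the truncation defects $-\sum_{\ell\ge1}\alpha_{i+\ell-1}v_\ell$ and $-\sum_{\ell\le n}\alpha_{2n+1-i-\ell}v_\ell$ reproduce, with the correct sign, the two corner blocks of $H_n^{\bm{\alpha},1}$, so that $(T_n^{\bm{\alpha}}-H_n^{\bm{\alpha},1})\bm{u}_j=g_p^{\bm{\alpha}}(\theta_j)\bm{u}_j$; and you correctly identify $n\ge p+1$ as the non-overlap condition ($p+1<2n+1-p$) that makes each Hankel entry a single $\alpha$ rather than a sum. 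The normalization computation ($\|\bm{u}_j\|_2^2=n/2$ for $j<n$, $=n$ for $j=n$ via $\sin(\pi(i-\tfrac12))=(-1)^{i-1}$) and the completeness argument via orthogonality of a symmetric matrix's eigenvectors are also right; as you note, $j=n$ needs separate treatment only for the norm, not for the eigenvector identity. What the paper's route buys is uniformity — all four boundary cases are handled at once by the $\tau(\varepsilon,\varphi)$ algebra machinery of Appendix~B — while your computation buys transparency about exactly where the hypothesis $n\ge p+1$ and the specific Hankel corrections enter.
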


\begin{remark}
Theorems~\ref{closed1},~\ref{closed3}, and~\ref{closed2} precisely mirror those in \cite[Theorems~2.1,~2.4, and~2.2]{deng2021analytical}, differing only in the normalization of the eigenvectors. Furthermore, an alternative version of Theorem~\ref{closed1} can be found in \cite[Proposition~2.2]{bini1983spectral}, wherein the eigenvalues are provided implicitly.
\end{remark}

\begin{remark}
From Appendix~\ref{sec:B}, it is clear that the matrices in Theorems~\ref{closed1}, \ref{closed3}, and~\ref{closed2} belong to certain matrix algebras, namely
\begin{equation*}
T_n^{\bm{\alpha}}-H_n^{\bm{\alpha},2}\in\tau_n(0,0),\quad
T_n^{\bm{\alpha}}+H_n^{\bm{\alpha},1}\in\tau_n(1,1),
\quad T_n^{\bm{\alpha}}-H_n^{\bm{\alpha},1}\in\tau_n(-1,-1).
\end{equation*}
\end{remark}

\section{Eigenpairs of the optimal spline discretization matrices}\label{sec:closedformoptspaces}
Our objective is to establish closed-form expressions for the eigenvalues and eigenvectors of the matrices $M^{p,\bi}_n$ and $K^{p,\bi}_n$, $\bi=0,1,2$, defined in \eqref{allmatrices} and arising from the discretization in the spaces $\mathbb{S}_{p,n,\bi}^{\opt}$ related to the boundary conditions \eqref{Dbc}--\eqref{Mbc}. For the sake of unification, we introduce a general matrix $X^{p,r,\bi}$ as follows.

\begin{definition}\label{difmatrixX}
Let $p,n\in \mathbb{N}^{\star}$ and $r\in\mathbb{N}$ such that $r \leq p$.
Let $B_{\bi}^p=\left\{N^{p}_{i,\bi},\ i=1,\ldots,n\right\}$ be the basis constructed in Sections~\ref{sec:DBCBasis}--\ref{sec:MBCBasis} for $\bi=0,1,2$, respectively.
We consider the matrix
$X^{p,r,\bi}=\left( X^{p,r,\bi}_{i,j}\right)_{1\leq i,j\leq n}$, whose elements are given by
\begin{equation*}
X^{p,r,\bi}_{i,j} = \displaystyle\int_0^1 (N^{p}_{i,\bi})^{(r)}(x) (N^{p}_{j,\bi})^{(r)}(x)\;{\dd}x,\quad  i,j=1,\ldots,n,
\end{equation*}
for $\bi=0,1,2$.
In particular, in the cases $r=0,1$, we have $X^{p,0,\bi}=M_n^{p,\bi}$ and $X^{p,1,\bi}=K_n^{p,\bi}$.
\end{definition}

In this section, by leveraging the properties \eqref{suppandregucb}--\eqref{innerprodcb} of cardinal B-splines, we show that the matrices in Definition~\ref{difmatrixX} exhibit a Toeplitz-minus-Hankel or Toeplitz-plus-Hankel structure, for all the values of $p$. Then, the results from Section~\ref{sec:toeplitz+hankel} lead to closed-form expressions for the eigenvalues and eigenvectors of our matrices of interest.
The section is divided into three subsections, each corresponding to one of the boundary conditions \eqref{Dbc}--\eqref{Mbc}. 
Since all three boundary cases can be treated using the same technique, we primarily detail the Dirichlet case. However, to enhance readability, we delegate the technical proofs to Appendix~\ref{sec:A}. As a side result, for the mixed boundary conditions \eqref{Mbc}, we identify a new Hankel structure, not covered in \cite{deng2021analytical}.

\subsection{The Dirichlet case}\label{sec:closedformdiri}
Let us consider the discretization of the eigenvalue problem \eqref{pd1} in the space $\mathbb{S}_{p,n,0}^{\opt}$ defined in \eqref{Sopti} and related to the boundary conditions \eqref{Dbc}.
We aim to show that the matrix $X^{p,r,0}$ in Definition~\ref{difmatrixX} has a Toeplitz-minus-Hankel form.
Before going to the main result, two essential lemmas are in order. The first lemma is a key ingredient in the proof and the second one contributes to its reduction.
\begin{lemma}\label{integcal}
Let $a,b,\rho\in\mathbb{R}$, $p\in\mathbb{N}^{\star}$, and $r\in\mathbb{N}$ such that $r \leq p$. Then,
\begin{equation*}
\displaystyle\int_a^b \mathcal{N}_p^{(r)}(t) \mathcal{N}_p^{(r)}(t+\rho) \;{\dd}t=\displaystyle\int_{p+1-b-\rho}^{p+1-a-\rho} \mathcal{N}_p^{(r)}(t) \mathcal{N}_p^{(r)}(t+\rho) \;{\dd}t.
\end{equation*}
\end{lemma}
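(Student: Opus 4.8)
The plan is to prove the identity by a single change of variables combined with the symmetry property \eqref{symmetrycb} of the cardinal B-spline. First I would rewrite \eqref{symmetrycb} in the more convenient reflected form
\[
\mathcal{N}_p^{(r)}(s) = (-1)^r \mathcal{N}_p^{(r)}(p+1-s), \qquad s\in\mathbb{R},\ r\le p,
\]
which is exactly \eqref{symmetrycb} after the substitution $s = \tfrac{p+1}{2}+t$. Since $r\le p$ and $p\ge 1$, the function $\mathcal{N}_p^{(r)}$ is well defined and continuous by \eqref{suppandregucb}, so all the integrals below make sense for arbitrary real $a,b,\rho$.

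Next I would perform the substitution $t = p+1-s-\rho$ (equivalently $s = p+1-t-\rho$) in the integral on the left-hand side. Under this map $\dd t = -\dd s$, the endpoint $t=a$ corresponds to $s = p+1-a-\rho$ and $t=b$ to $s = p+1-b-\rho$; after reversing the orientation of the interval the lower limit becomes $p+1-b-\rho$ and the upper limit $p+1-a-\rho$, matching exactly the right-hand side. The integrand transforms into $\mathcal{N}_p^{(r)}(p+1-s-\rho)\,\mathcal{N}_p^{(r)}(p+1-s)$.

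Finally I would apply the reflected symmetry to each factor separately: $\mathcal{N}_p^{(r)}(p+1-s) = (-1)^r \mathcal{N}_p^{(r)}(s)$, and, viewing $s+\rho$ as the argument, $\mathcal{N}_p^{(r)}(p+1-(s+\rho)) = (-1)^r \mathcal{N}_p^{(r)}(s+\rho)$. Multiplying these two relations produces an overall factor $(-1)^{2r}=1$, so the transformed integrand collapses back to $\mathcal{N}_p^{(r)}(s)\,\mathcal{N}_p^{(r)}(s+\rho)$, which gives the claimed equality. There is no genuine obstacle in this argument; the only points requiring a little care are bookkeeping the integration limits after the orientation flip and applying the symmetry consistently to both B-spline factors, making sure the shift $\rho$ is carried along correctly inside the second factor.
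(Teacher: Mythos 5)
Your proposal is correct and follows essentially the same route as the paper: apply the symmetry property \eqref{symmetrycb} in its reflected form to both factors and perform the change of variables $t\mapsto p+1-t-\rho$, with the two factors $(-1)^r$ cancelling. The only cosmetic imprecision is the claim that $\mathcal{N}_p^{(r)}$ is continuous for all $r\le p$ (for $r=p$ it is only piecewise constant), but this does not affect the validity of the integral identity.
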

\begin{proof}
Fix $a,b,\rho\in\mathbb{R}$. By the symmetry property of cardinal B-splines \eqref{symmetrycb} we have
\begin{equation*}
\mathcal{N}_p^{(r)}(t)=(-1)^r\mathcal{N}_p^{(r)}(p+1-t),
\end{equation*}
and 
\begin{equation*}
\mathcal{N}_p^{(r)}(t+\rho)=(-1)^r\mathcal{N}_p^{(r)}(p+1-t-\rho).
\end{equation*}
Integrating the product of these functions and using a change of variables ($t\leftarrow p+1-t-\rho$) gives the stated result.
\end{proof}

\begin{lemma}\label{anti-sy}
Let $p,n\in\mathbb{N}^{\star}$ and $r\in\mathbb{N}$ such that $r \leq p$. Then, the matrix $X^{p,r,0}$ is both symmetric and centrosymmetric.
\end{lemma}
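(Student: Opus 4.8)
The plan is to establish the two symmetry properties of $X^{p,r,0}$ directly from the structure of the basis functions $N^p_{i,0}$, combining Lemma~\ref{integcal} with the extraction formulas \eqref{basis0podd}--\eqref{bsplines0even}.

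\textbf{Symmetry.} This is immediate from the definition in Definition~\ref{difmatrixX}: since the integrand $(N^p_{i,0})^{(r)}(x)(N^p_{j,0})^{(r)}(x)$ is symmetric under exchange of $i$ and $j$, we have $X^{p,r,0}_{i,j}=X^{p,r,0}_{j,i}$, and thus $X^{p,r,0}=(X^{p,r,0})^T$.

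\textbf{Centrosymmetry.} Recall that a matrix $A$ of size $n$ is centrosymmetric when $J_n A J_n = A$, equivalently $A_{i,j}=A_{n+1-i,n+1-j}$ for all $i,j$. The key observation is that the B-spline-like basis of $\mathbb{S}_{p,n,0}^{\opt}$ is symmetric with respect to the midpoint $x=\tfrac12$ of the interval $[0,1]$: from the extraction formulas \eqref{basis0podd}--\eqref{bsplines0even} together with the symmetry \eqref{symmetrycb} of the cardinal B-spline $\mathcal{N}_p$ and the way the exchange matrix $J_n$ enters the matrices $L^0_n$ and $R^0_n$, one obtains
\begin{equation*}
N^p_{i,0}(x) = N^p_{n+1-i,0}(1-x), \quad i=1,\ldots,n, \quad x\in[0,1].
\end{equation*}
I would verify this reflection identity by checking the two parity cases separately: in each case the grid $\bm{\tau}^{\opt}_{p,n,0}$ is symmetric about $\tfrac12$, the underlying translates $N^p_i$ of $\mathcal{N}_p$ satisfy a reflection relation inherited from \eqref{symmetrycb}, and the block-Toeplitz-like coefficient matrix in \eqref{basis0podd} (resp.\ \eqref{basis0peven}) is persymmetric up to the sign pattern carried by the $-J_n$ blocks, which precisely accounts for the index reversal $i\mapsto n+1-i$. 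Differentiating the reflection identity $r$ times gives $(N^p_{i,0})^{(r)}(x)=(-1)^r (N^p_{n+1-i,0})^{(r)}(1-x)$. Substituting into the integral defining $X^{p,r,0}_{i,j}$ and applying the change of variables $x\leftarrow 1-x$, the two factors of $(-1)^r$ cancel and we get $X^{p,r,0}_{i,j}=X^{p,r,0}_{n+1-i,n+1-j}$, i.e.\ centrosymmetry.

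\textbf{Main obstacle.} The routine part is the change of variables; the delicate part is justifying the reflection identity $N^p_{i,0}(x)=N^p_{n+1-i,0}(1-x)$ cleanly from the extraction matrices, keeping careful track of the sign pattern in the $L^0_n$, $R^0_n$ blocks and the half-integer shifts in \eqref{bsplines0even} for even $p$. This is exactly where Lemma~\ref{integcal} can be invoked as an alternative route: rather than proving the reflection identity for the basis functions, one can work at the level of the matrix entries, write each $X^{p,r,0}_{i,j}$ as a finite sum of cardinal B-spline inner products $\int_{a}^{b}\mathcal{N}_p^{(r)}(t)\mathcal{N}_p^{(r)}(t+\rho)\,\dd t$ over the relevant subintervals, and use Lemma~\ref{integcal} to match the contribution indexed by $(i,j)$ with that indexed by $(n+1-i,n+1-j)$. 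I would present whichever of the two is shorter, most likely the basis-reflection argument since it also serves later in the paper, and relegate the bookkeeping to Appendix~\ref{sec:A}.
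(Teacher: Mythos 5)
Your proposal is correct and follows essentially the same route as the paper: symmetry is read off directly from Definition~\ref{difmatrixX}, and centrosymmetry is deduced from the reflection identity $N^p_{i,0}(x)=N^p_{n+1-i,0}(1-x)$, which the paper likewise obtains from the symmetry of the underlying cardinal B-spline translates combined with the extraction formulas \eqref{basis0podd} and \eqref{basis0peven}. The only difference is presentational: the paper states the reflection identity for the translates $N^p_i$ first (unifying the parity cases via $p_0$) and leaves the change of variables implicit, whereas you spell out the $(-1)^r$ cancellation explicitly.
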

\begin{proof} 
From Definition~\ref{difmatrixX} it is clear that $X^{p,r,0}$ is a symmetric matrix. Moreover, from the properties of cardinal B-splines \eqref{suppandregucb}--\eqref{innerprodcb} we deduce
\begin{equation*}
N^{p}_{i}(x)=N^{p}_{n+1-p_0-p-i}(1-x), \quad i=-p,\ldots,n+1-p_0, \quad p_0=p-2\left\lfloor \frac{p}{2}\right\rfloor, \quad x\in[0,1],
\end{equation*}
where $N^{p}_{i}$ is defined in \eqref{bsplines0odd} and \eqref{bsplines0even}.
Note that $p_0$ allows us to unify the cases of even and odd degree.
Then, from the constructions \eqref{basis0podd} and \eqref{basis0peven} it follows
\begin{equation*}
N^{p}_{i,0}(x)=N^{p}_{n+1-i,0}(1-x), \quad i=1,\ldots,n, \quad x\in[0,1],
\end{equation*}
which implies that $X^{p,r,0}$ is a centrosymmetric matrix.
\end{proof}

\begin{remark}\label{Diimport:centrosymalw}
The symmetry and centrosymmetry of the matrix $X^{p,r,0}$ hold without any restriction on the size $n$. Therefore, the insights given in \cite{cantoni1976eigenvalues} become particularly valuable for studying the eigenvectors, eigenvalues, and the development of fast solvers.
\end{remark}

In the next theorem, we establish that the matrix $X^{p,r,0}$ takes the form of $T_n^{\bm{\alpha}}-H_n^{\bm{\alpha},2}$ for a specific vector $\bm{\alpha}$, where $T_n^{\bm{\alpha}}$ and $H_n^{\bm{\alpha},2}$ are defined in \eqref{Toplitzalpha} and \eqref{hankelmatricesdif}, respectively. From Theorem~\ref{closed1}, it is evident that a closed-form expression for the eigenvalues and eigenvectors requires at least $n \geq p+1$. However, in our case, we need to assume  $n \geq C(p) \geq p+1$ and we aim to determine the smallest possible value for $C(p)$ that gives us the structure $T_n^{\bm{\alpha}}-H_n^{\bm{\alpha},2}$ without any perturbation. 

\begin{theorem}\label{closedformdiri}
Let $p,n\in\mathbb{N}^{\star}$ and $r\in\mathbb{N}$ such that $n\geq \max\left\{p+1,p+\left\lfloor\frac{p}{2}\right\rfloor-1\right\}$ and $r \leq p$. Then,
\begin{equation}\label{whatweneedtoproof0}
X^{p,r,0}=(n+1)^{2r-1}\left(T_n^{\bm{\alpha}}-H_n^{\bm{\alpha},2}\right),
\end{equation}
where the elements of the vector $\bm{\alpha}$ are given by
\begin{equation}\label{alphaparam}
\alpha_k=(-1)^r\mathcal{N}_{2p+1}^{(2r)}(p+1-k),\quad k=0,\ldots,p.
\end{equation}
\end{theorem}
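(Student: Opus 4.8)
# Proof Proposal for Theorem~\ref{closedformdiri}

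The plan is to compute the entries of $X^{p,r,0}$ explicitly from the definition and recognize the resulting matrix as $(n+1)^{2r-1}(T_n^{\bm\alpha}-H_n^{\bm\alpha,2})$. The starting point is the expansion of the basis functions $N^p_{i,0}$ in terms of the shifted cardinal B-splines $N^p_k$ of \eqref{bsplines0odd}--\eqref{bsplines0even}, as dictated by the extraction matrices in \eqref{basis0podd} and \eqref{basis0peven}. Writing each $N^p_{i,0}$ as a signed sum of a few $N^p_k$'s (with signs coming from the $\pm J_n$ blocks in $L^0_n$, $R^0_n$), the entry $X^{p,r,0}_{i,j}$ becomes a finite linear combination of integrals $\int_0^1 (N^p_{k})^{(r)}(N^p_{l})^{(r)}$. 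First I would reduce each such integral to an inner product of cardinal B-splines: by the scaling $N^p_k(x)=\mathcal N_p((n+1)x-\sigma_k)$ (with $\sigma_k=k$ or $k-\tfrac12$), a change of variables gives $\int_0^1(N^p_k)^{(r)}(N^p_l)^{(r)} = (n+1)^{2r-1}\int_{\text{(sub-interval)}}\mathcal N_p^{(r)}(t)\,\mathcal N_p^{(r)}(t+(\sigma_k-\sigma_l))\,\dd t$, which explains the prefactor $(n+1)^{2r-1}$.

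The next step is to show that, \emph{in the interior}, the integration range can be taken to be all of $\mathbb R$. By the support property \eqref{suppandregucb}, $\mathcal N_p^{(r)}(t)\mathcal N_p^{(r)}(t+\rho)$ is supported in a bounded set; when the relevant indices are far enough from the boundary (which is where the hypothesis $n\ge \max\{p+1, p+\lfloor p/2\rfloor-1\}$ enters — it guarantees the overlapping supports stay clear of $0$ and $1$), the clipped integral equals the full-line integral $\int_{\mathbb R}\mathcal N_p^{(r)}(t)\mathcal N_p^{(r)}(t+\rho)\,\dd t$, which by \eqref{innerprodcb} equals $(-1)^r\mathcal N_{2p+1}^{(2r)}(p+1+\rho)$. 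Since $\rho=\sigma_k-\sigma_l$ is an integer equal to $k-l$ in all parity cases, the interior contribution of the Toeplitz part is exactly $\alpha_{k-l}$ with $\alpha_m=(-1)^r\mathcal N_{2p+1}^{(2r)}(p+1-|m|)$, vanishing for $|m|>p$ because $\mathcal N_{2p+1}$ is supported on $[0,2p+2]$. This already produces $T_n^{\bm\alpha}$ away from the corners; the cross-terms coming from the reflected copies ($-J_n$ blocks) produce the Hankel contribution. Here Lemma~\ref{integcal} is the tool that converts an integral of a reflected product back into a standard cardinal-B-spline inner product, pinning down the Hankel entries as $-\alpha_{k+l-\text{const}}$, i.e.\ exactly $-H_n^{\bm\alpha,2}$.

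The delicate part — and what I expect to be the main obstacle — is the bookkeeping near the two boundaries: one must check that the boundary rows/columns of $X^{p,r,0}$ coincide with those of $(n+1)^{2r-1}(T_n^{\bm\alpha}-H_n^{\bm\alpha,2})$ \emph{on the nose}, with no perturbation terms. This is where the precise form of the extraction matrices $L^0_n,R^0_n$ (the placement of the zero columns $O_n$ and the single $-J_n$) and the exact threshold $C(p)=\max\{p+1,\,p+\lfloor p/2\rfloor-1\}$ matter: the multiplicities $\tfrac{p+1}{2}$ (odd $p$) or $\tfrac p2+1$ (even $p$) of the $L^0_n$/$R^0_n$ blocks, together with the shift $\sigma_k-\sigma_l=k-l$, have to telescope so that the reflected-plus-anti-reflected images of the near-boundary B-splines reproduce precisely the Hankel pattern $H_n^{\bm\alpha,2}$ (which starts from $\alpha_2$, not $\alpha_1$) and cancel everything else. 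Centrosymmetry (Lemma~\ref{anti-sy}) halves the work: it suffices to verify the top-left corner block, of size roughly $p\times p$, and the interior; the bottom-right corner then follows automatically. I would organize the boundary check as a short case analysis on $p$ even/odd, tracking which shifted B-splines $N^p_k$ with $k\le 0$ (resp.\ $k\ge n+1-p_0$) are folded back into the index range $1,\dots,n$ and with which sign, then matching coefficients against \eqref{Toplitzalpha} and \eqref{hankelmatricesdif} entry by entry. Since the paper defers these computations to Appendix~\ref{sec:A}, I would state the interior computation in the main text and relegate the corner verification there.
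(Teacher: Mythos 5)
Your plan is correct and follows essentially the same route as the paper's proof in Appendix~A.1: simplify the extraction matrices under $n\geq p+1$ so each $N^p_{i,0}$ is a signed sum of at most two shifted cardinal B-splines, rescale to get the factor $(n+1)^{2r-1}$, use \eqref{suppandregucb} and \eqref{innerprodcb} to identify the interior Toeplitz block, fold the reflected terms via Lemma~\ref{integcal} to produce $-H_n^{\bm{\alpha},2}$, invoke $n\geq p+\lfloor p/2\rfloor-1$ to kill the spurious far-corner integrals, and finish by symmetry and centrosymmetry (Lemma~\ref{anti-sy}). The only cosmetic difference is that the paper unifies the even/odd-$p$ cases through the parameters $q$ and $p_0$ and organizes the boundary check as three cases in the column index $j$ over full rows $1\leq i\leq\lfloor p/2\rfloor$, rather than an even/odd split over a corner block, but the content is the same.
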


\begin{proof}
For ease of reading, the detailed proof is moved to Section~\ref{sec:A1} (see Appendix~\ref{sec:A}) and we only sketch the idea here. Assuming $n \geq p + 1$, we start by providing a simplified  definition of the basis $B_0^p$, derived from \eqref{basis0podd} and \eqref{basis0peven}. This allows us to obtain explicit expressions for the elements of the matrix $X^{p,r,0}$ by leveraging the properties of cardinal B-splines \eqref{suppandregucb}--\eqref{innerprodcb} and utilizing Lemma~\ref{integcal}. We do this in two steps.
In the first step, we show that the central part of the matrix corresponds to a Toeplitz form, 
\begin{equation*}
X^{p,r,0}_{i,j}=(n+1)^{2r-1}(-1)^{r}\mathcal{N}_{2p+1}^{(2r)}(p+1+i-j),\quad i,j=\left\lfloor\frac{p}{2}\right\rfloor+1,\ldots,n-\left\lfloor\frac{p}{2}\right\rfloor.
\end{equation*}
In the second step, under the stated conditions on $n$ and $p$, we show that the first block of size $\left(\left\lfloor\frac{p}{2}\right\rfloor,n\right)$ corresponds to a Toeplitz-minus-Hankel form,
\begin{equation*}
X^{p,r,0}_{i,j}=(n+1)^{2r-1}\left((-1)^r\mathcal{N}_{2p+1}^{(2r)}(p+1+i-j)-(-1)^r\mathcal{N}_{2p+1}^{(2r)}(p+1+i+j) \right),\quad i=1,\ldots,\left\lfloor\frac{p}{2}\right\rfloor,\quad j=i,\ldots,n.
\end{equation*}
Exploiting the symmetry and centrosymmetry of $X^{p,r,0}$ (see Lemma~\ref{anti-sy}) completes the proof.
\end{proof}
\begin{remark}\label{generalizationX}
As a generalization of $X^{p,r,0}$, one can consider two indices $r_1, r_2\in\mathbb{N}$ instead of just $r$ in Definition~\ref{difmatrixX}. Then, the same analysis can be followed to derive a more diverse range of Toeplitz-minus/plus-Hankel matrices that depend on the pair $(r_1, r_2)$. For brevity, we omit this analysis.
\end{remark}

The next corollary follows directly from Theorems~\ref{closed1} and~\ref{closedformdiri}, taking into account that the function $g_p^r$ in \eqref{g_r} is equal to $g_p^{\bm{\alpha}}$ in \eqref{g}, with the elements of $\bm{\alpha}$ specified in \eqref{alphaparam}.
\begin{corollary}\label{corollaryeigdiri}
Let $p,n\in\mathbb{N}^{\star}$ and $r\in\mathbb{N}$ such that $n\geq \max\left\{p+1,p+\left\lfloor\frac{p}{2}\right\rfloor-1\right\}$ and $r\leq p$. Then, the eigenpairs of the matrix $X^{p,r,0}$ can be expressed as
\begin{equation*}
\lambda_j\left(X^{p,r,0}\right)=(n+1)^{2r-1}\;g^{r}_p\left(\frac{j\pi}{n+1}\right),\quad j=1,\ldots,n,
\end{equation*}
and
\begin{equation*}
u_{i,j}\left(X^{p,r,0}\right)=\sqrt{\frac{2}{n+1}}\sin\left(\frac{ij\pi}{n+1}\right),\quad i,j=1,\ldots, n,
\end{equation*}
where $g_p^r$ is given by
\begin{equation}\label{g_r}
g_p^r(\theta)=
 (-1)^{r}\mathcal{N}_{2p+1}^{(2r)}(p+1)+2(-1)^{r}\displaystyle\sum_{k=1}^p\mathcal{N}^{(2r)}_{2p+1}(p+1-k)\cos(k\theta),\quad \theta\in[0,\pi], \quad r\leq p.\\
\end{equation} 
\end{corollary}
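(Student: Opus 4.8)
The plan is to derive \eqref{closedformdiri}'s matrix identity \eqref{whatweneedtoproof0} directly from the B-spline-like basis \eqref{basis0podd}--\eqref{basis0peven}, and then invoke Theorem~\ref{closed1}. First I would collapse the block construction of $B_0^p$ into a single explicit formula: the $i$-th basis function $N^p_{i,0}$ is a signed combination of the cardinal B-splines $N^p_k$ in \eqref{bsplines0odd}/\eqref{bsplines0even}, where the interior indices contribute with $+1$ and the "reflected" indices (coming from the $-J_n$ blocks in $L^0_n$, $R^0_n$) contribute with $-1$. The key point is that when $n$ is large enough, the left reflections only affect the first $\lfloor p/2 \rfloor$ rows and the right reflections only the last $\lfloor p/2 \rfloor$ rows, so the bulk of the matrix is unperturbed. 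This is where the hypothesis $n \geq \max\{p+1, p+\lfloor p/2\rfloor-1\}$ enters: it guarantees these two reflected bands do not overlap and do not reach past the opposite boundary.

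Next, using the $L^2$ inner-product identity \eqref{innerprodcb}, a generic entry $\int_0^1 (N^p_i)^{(r)}(N^p_j)^{(r)}$ (after the change of scale $x \mapsto (n+1)x$, which produces the factor $(n+1)^{2r-1}$) equals $(-1)^r \mathcal{N}_{2p+1}^{(2r)}(p+1+i-j)$, i.e. a Toeplitz entry with symbol coefficients $\alpha_k$ as in \eqref{alphaparam}; here the support property \eqref{suppandregucb} means $\alpha_k=0$ for $|k|>p$. For the interior rows this gives exactly $T_n^{\bm{\alpha}}$, as recorded in the first step of the sketch. For the first $\lfloor p/2\rfloor$ rows, the extra reflected terms contribute cross-products between $N^p_i$ and the reflection of $N^p_j$; by Lemma~\ref{integcal} (the integral reflection identity) each such cross-product equals $-(-1)^r \mathcal{N}_{2p+1}^{(2r)}(p+1+i+j)$, which is precisely the $(i,j)$-entry of $-H_n^{\bm{\alpha},2}$. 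So the top-left $\lfloor p/2\rfloor \times n$ block matches $T_n^{\bm\alpha}-H_n^{\bm\alpha,2}$. Symmetry and centrosymmetry of $X^{p,r,0}$ (Lemma~\ref{anti-sy}), together with the fact that $T_n^{\bm\alpha}-H_n^{\bm\alpha,2}$ is itself symmetric and centrosymmetric, then propagate the identity to the bottom rows and hence to all of $X^{p,r,0}$. Finally, Corollary~\ref{corollaryeigdiri} follows by substituting the identity \eqref{whatweneedtoproof0} into Theorem~\ref{closed1}, observing that $g_p^{\bm\alpha}$ of \eqref{g} with $\bm\alpha$ from \eqref{alphaparam} coincides with $g_p^r$ of \eqref{g_r}, and noting $\lambda_j\left((n+1)^{2r-1}(T_n^{\bm\alpha}-H_n^{\bm\alpha,2})\right)=(n+1)^{2r-1}\lambda_j(T_n^{\bm\alpha}-H_n^{\bm\alpha,2})$ while eigenvectors are scale-invariant.

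The main obstacle I anticipate is the bookkeeping in the second step: carefully tracking which reflected B-spline indices appear in row $i$ (for $i\le \lfloor p/2\rfloor$), verifying that each reflected index lands in the correct position and with the correct sign so that the cross-terms assemble exactly into the anti-diagonal Hankel pattern of $H_n^{\bm\alpha,2}$ (which starts from $\alpha_2$, not $\alpha_1$ — so one must check that no $\alpha_0$ or $\alpha_1$ spuriously appears, i.e. that the reflection never produces the argument $p+1$ or $p$), and confirming that no reflected term survives in rows $\lfloor p/2\rfloor < i \le n-\lfloor p/2\rfloor$. The size condition $n\geq p+\lfloor p/2\rfloor-1$ is exactly what rules out the pathological overlap, and checking its sharpness (that a smaller $n$ genuinely perturbs the structure) is the delicate quantitative part. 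Once the entrywise identity is in hand, the rest is an immediate application of the already-quoted structured-matrix theorem, so I would relegate all the index arithmetic to the appendix as the authors do and keep the main-text argument at the level of the two-step sketch above.
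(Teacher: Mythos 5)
Your proposal is correct and follows essentially the same route as the paper: it reconstructs Theorem~\ref{closedformdiri} by the same two-step entrywise computation (Toeplitz interior via \eqref{innerprodcb}, Toeplitz-minus-Hankel boundary rows via Lemma~\ref{integcal}, then propagation by the symmetry and centrosymmetry of Lemma~\ref{anti-sy}) and then invokes Theorem~\ref{closed1} after identifying $g_p^{\bm\alpha}$ with $g_p^r$. The paper's actual proof of the corollary is exactly this one-line combination of Theorems~\ref{closed1} and~\ref{closedformdiri}, with the index bookkeeping you anticipate relegated to Appendix~\ref{sec:A1}.
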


We now state a corollary regarding the closed form of eigenvalues and eigenvectors of the discretized Laplace operator \eqref{systfindimi} with the Dirichlet boundary conditions \eqref{Dbc}. We recall that the matrices $M_n^{p,0}$ and $K_n^{p,0}$ defined in \eqref{allmatrices} represent a particular case of $X^{p,r,0}$ ($r=0,1$). 
\begin{corollary}\label{coroloryclosedform0}
Let $p,n\in\mathbb{N}^{\ast}$ such that $n\geq \max\left\{p+1,p+\left\lfloor\frac{p}{2}\right\rfloor-1\right\}$. Then, the eigenpairs of the matrix $L_n^{p,0}=\left(M_n^{p,0}\right)^{-1}K_n^{p,0}$ can be expressed as
\begin{equation*}
\lambda_j\left(L_n^{p,0}\right)=(n+1)^{2}\;\dfrac{g^{1}_p\left(\frac{j\pi}{n+1}\right)}{g^{0}_p\left(\frac{j\pi}{n+1}\right)},\quad j=1,\ldots,n,
\end{equation*}
and
\begin{equation*}
u_{i,j}\left(L_n^{p,0}\right)=\sqrt{\frac{2}{n+1}}\sin\left(\frac{ij\pi}{n+1}\right),\quad  i,j=1,\ldots, n,
\end{equation*}
where $g_p^{r}$ is given by \eqref{g_r} for $r=0,1$.
\end{corollary}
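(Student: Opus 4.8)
The plan is to obtain the eigenpairs of $L_n^{p,0}=\left(M_n^{p,0}\right)^{-1}K_n^{p,0}$ directly from the already–established closed forms for the mass and stiffness matrices separately. The key observation is that Corollary~\ref{corollaryeigdiri}, specialized to $r=0$ and $r=1$, gives not only the eigenvalues of $M_n^{p,0}$ and $K_n^{p,0}$ but the \emph{same} orthonormal eigenvector matrix for both, namely the discrete sine transform matrix $S=\left[\sqrt{\tfrac{2}{n+1}}\sin\!\left(\tfrac{ij\pi}{n+1}\right)\right]_{i,j=1}^n$. Thus $M_n^{p,0}$ and $K_n^{p,0}$ are simultaneously diagonalized by $S$, which is exactly the statement that they lie in the same $\tau$-algebra $\tau_n(0,0)$ (cf.\ the Remark following Theorem~\ref{closed2}).

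Concretely, I would first invoke Theorem~\ref{closedformdiri} with $r=0$ and $r=1$ to write $M_n^{p,0}=(n+1)^{-1}\left(T_n^{\bm{\alpha}^{(0)}}-H_n^{\bm{\alpha}^{(0)},2}\right)$ and $K_n^{p,0}=(n+1)\left(T_n^{\bm{\alpha}^{(1)}}-H_n^{\bm{\alpha}^{(1)},2}\right)$, with $\alpha^{(r)}_k=(-1)^r\mathcal{N}_{2p+1}^{(2r)}(p+1-k)$; the hypothesis $n\geq\max\{p+1,p+\lfloor p/2\rfloor-1\}$ is precisely what is needed for both representations to hold without perturbation. Then, by Theorem~\ref{closed1} (or equivalently Corollary~\ref{corollaryeigdiri}), $S^{\mathsf T}M_n^{p,0}S=(n+1)^{-1}D_M$ and $S^{\mathsf T}K_n^{p,0}S=(n+1)D_K$, where $D_M=\diag_j g_p^0\!\left(\tfrac{j\pi}{n+1}\right)$ and $D_K=\diag_j g_p^1\!\left(\tfrac{j\pi}{n+1}\right)$; here I use that $S$ is orthogonal and symmetric. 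Since $g_p^0$ is (up to the positive factor $(n+1)^{-1}$) the symbol of the mass matrix and $M_n^{p,0}$ is symmetric positive definite, all entries $g_p^0\!\left(\tfrac{j\pi}{n+1}\right)$ are strictly positive, so $D_M$ is invertible.

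It then follows that $L_n^{p,0}=\left(M_n^{p,0}\right)^{-1}K_n^{p,0}=S\,(n+1)^{-1}D_M^{-1}S^{\mathsf T}\,S(n+1)D_K S^{\mathsf T}=S\left((n+1)^2 D_M^{-1}D_K\right)S^{\mathsf T}$, which is a similarity (indeed an orthogonal diagonalization of the generalized eigenproblem), giving at once $\lambda_j(L_n^{p,0})=(n+1)^2\, g_p^1\!\left(\tfrac{j\pi}{n+1}\right)/g_p^0\!\left(\tfrac{j\pi}{n+1}\right)$ and eigenvectors equal to the columns of $S$, i.e.\ $u_{i,j}(L_n^{p,0})=\sqrt{\tfrac{2}{n+1}}\sin\!\left(\tfrac{ij\pi}{n+1}\right)$. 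I would close by noting that the identification $g_p^{\bm{\alpha}}=g_p^r$ for the relevant $\bm{\alpha}$ was already recorded before Corollary~\ref{corollaryeigdiri}.

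There is essentially no hard part here, since all the work is done in Theorem~\ref{closedformdiri} and Corollary~\ref{corollaryeigdiri}; the only genuine point to verify is that $g_p^0\!\left(\tfrac{j\pi}{n+1}\right)\neq 0$ for $j=1,\dots,n$ so that the quotient is well defined and the similarity above is valid. This is immediate from positive definiteness of the mass matrix $M_n^{p,0}$ (equivalently, from the fact that $g_p^0$ is the symbol of a Gram matrix of a linearly independent spline basis, hence positive on $(0,\pi)$, with the endpoint values also positive under the stated size assumption). Everything else is a one-line consequence of simultaneous diagonalizability.
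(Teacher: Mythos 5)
Your proposal is correct and follows essentially the same route as the paper: both deduce the result from Corollary~\ref{corollaryeigdiri} with $r=0,1$, exploiting that $M_n^{p,0}$ and $K_n^{p,0}$ are simultaneously diagonalized by the discrete sine transform matrix. The only cosmetic difference is in justifying $g_p^0\!\left(\tfrac{j\pi}{n+1}\right)>0$: you argue via positive definiteness of the Gram (mass) matrix, while the paper cites the explicit bound $\left(\tfrac{4}{\pi^2}\right)^{p+1}\leq g_p^0(\theta)\leq 1$ from the literature; both are valid.
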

\begin{proof}
The results follow directly from Corollary \ref{corollaryeigdiri} by considering the cases $r=0$ and $r=1$. Additionally, it was shown in \cite{garoni2014spectrum} that
\begin{equation*}
\left(\frac{4}{\pi^2}\right)^{p+1}\leq g^0_p(\theta)\leq 1,\quad \theta \in[0,\pi],
\end{equation*}
ensuring the validity of the division by $g^0_p\left(\frac{j\pi}{n+1}\right)$, $j=1,\ldots,n$.
\end{proof}

\begin{remark}\label{rmk:symbols}
The functions $g^{1}_p$ and $g^{0}_p$ are the so-called (GLT) symbols of the stiffness and mass matrices resulting from the Galerkin discretization in the full spline space $\mathbb{S}_{p,\bm{\tau}}$, where $\bm{\tau}$ corresponds to a uniform grid.
These symbols are independent of the boundary conditions.
There is an extensive literature on their construction and on the analysis of their properties; see, e.g., \cite{donatelli2016spectral,ekstrom2018eigenvalues,garoni2014spectrum} and references therein.
The above results also show that $g^{1}_p$ and $g^{0}_p$ are the (GLT) symbols of the same matrices when the discretization is performed in  optimal spline subspaces (or in reduced spline spaces; see Section~\ref{sec:reducedspaces}) of the same degree.
\end{remark}

\begin{remark}\label{rmk:outlierfree0}
The explicit closed-form expressions, described in Corollary~\ref{coroloryclosedform0}, for the approximated spectrum of the Laplace operator with Dirichlet boundary conditions give an algebraic confirmation that the optimal spline subspaces $\mathbb{S}_{p,n,0}^{\opt}$ lead to outlier-free discretizations. Indeed, it was shown in \cite[Theorem~1]{ekstrom2018eigenvalues} that a uniform sampling of the function
\begin{equation*}
e_p(\theta)= \dfrac{g^{1}_p(\theta)}{g^{0}_p(\theta)},\ \ \theta \in [0,\pi]
\end{equation*}
provides an approximation of the foreseen spectrum, with no outliers; see also \cite{garoni2019symbol}.
In particular, by \cite[Lemma~1]{ekstrom2018eigenvalues} we have for $p\geq 1$ and $\theta\in [0,\pi]$,
\begin{equation}\label{error-bound}
0\leq\dfrac{e_p(\theta)-\theta^2}{\theta^2}\leq \dfrac{4\pi(\pi-\theta)}{(2\pi-\theta)^2}\left(\dfrac{\theta}{2\pi-\theta}\right)^{2p}+5\left(\dfrac{\theta}{2\pi+\theta}\right)^{2p}.
\end{equation}
By Corollary~\ref{coroloryclosedform0} and setting $\theta_j= \frac{j\pi}{n+1}$, $j=1, \ldots, n$,
we get
\begin{equation*}
\dfrac{\lambda_j\left(L_n^{p,0}\right)-(j\pi)^2}{(j\pi)^2}=
\dfrac{(n+1)^2}{(j\pi)^2}(e_p(\theta_j)-(\theta_j)^2)=\dfrac{e_p(\theta_j)-(\theta_j)^2}{(\theta_j)^2}.
\end{equation*}
Thus, for all $j=1,\ldots,n$, the inequalities in \eqref{error-bound} show that $\lambda_j\left(L_n^{p,0}\right)$ is an approximation of the $j$-th eigenvalue of the Laplace operator with Dirichlet boundary conditions, see \eqref{exactDbc}, whose relative error converges to zero as $p$ increases. Thus, the approximation is outlier-free.
\end{remark}

\subsection{The Neumann case}\label{sec:closedformneum}
After addressing the Dirichlet boundary conditions \eqref{Dbc} in Section~\ref{sec:closedformdiri}, our attention shifts to the Neumann boundary conditions \eqref{Nbc}. Although the proof of the main result and the analysis in this section is similar to the Dirichlet case, we still include it with the necessary details in Appendix~\ref{sec:A} for two reasons. Firstly, this serves as an illustration how to handle cases not explicitly covered in this work (see Remark~\ref{generalizationX}). Secondly, and more importantly, the analysis of the reduced spline space proposed in Section~\ref{sec:reducedspaces} can be viewed as a small variation of the Neumann scenario. Consequently, the results obtained here will straightforwardly imply the results of the reduced spline space.

Let us consider the discretization of the eigenvalue problem \eqref{pd1} in the space $\mathbb{S}_{p,n,1}^{\opt}$ defined in \eqref{Sopti} and related to the boundary conditions \eqref{Nbc}. The matrix $X^{p,r,1}$ in Definition~\ref{difmatrixX} exhibits the structure of $T_n^{\bm{\alpha}}+H_n^{\bm{\alpha},1}$ for a specific vector $\bm{\alpha}$, where $T_n^{\bm{\alpha}}$ and $H_n^{\bm{\alpha},1}$ are given by \eqref{Toplitzalpha} and \eqref{hankelmatricesdif}, respectively. 
Following a proof akin to that of Lemma~\ref{anti-sy}, it can be shown that $X^{p,r,1}$ is centrosymmetric. This property will be exploited in the proof of the next theorem, which shows the Toeplitz-plus-Hankel form of $X^{p,r,1}$.

\begin{theorem}\label{closedformneum}
Let $p,n\in\mathbb{N}^{\star}$ and $r\in\mathbb{N}$ such that $n\geq \max\left\{2p-\left\lfloor \frac{p}{2}\right\rfloor,2p-2\left\lfloor\frac{p}{2}\right\rfloor+1\right\}$ and $r\leq p$. Then,
\begin{equation}\label{whatweneedtoproof1}
X^{p,r,1}=n^{2r-1}\left(T_n^{\bm{\alpha}}+H_n^{\bm{\alpha},1}\right),
\end{equation}
where the elements of the vector $\bm{\alpha}$ are given by \eqref{alphaparam}.
\end{theorem}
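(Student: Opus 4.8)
The plan is to mirror the structure of the proof of Theorem~\ref{closedformdiri} (given in Appendix~\ref{sec:A1}), exploiting the parallel between the Neumann basis \eqref{basis1podd}--\eqref{basis1peven} and the Dirichlet basis \eqref{basis0podd}--\eqref{basis0peven}. The key structural difference is that in the Neumann construction the boundary is handled by blocks of the form $J_n\,|\,I_n\,|\,J_n$ rather than $L_n^0 = [\,I_n\,|\,O_n\,|\,{-}J_n\,|\,O_n\,]$, so the reflected contributions enter with a \emph{plus} sign and are shifted differently, which accounts for the appearance of $H_n^{\bm{\alpha},1}$ instead of $H_n^{\bm{\alpha},2}$. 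First I would record a simplified form of the basis $B_1^p$ valid when $n$ is large enough that the left- and right-boundary modifications do not interact, so that $N^p_{i,1}$ equals $N^p_i$ in the interior and a finite linear combination of (reflected) cardinal B-splines near each endpoint. Concretely, for $i$ in the interior $N^p_{i,1}(x)=\mathcal{N}_p(nx-(i-\tfrac12))$ (odd $p$) or $\mathcal{N}_p(nx-i)$ (even $p$), while for $i$ close to $1$ or $n$ a reflection term $\mathcal{N}_p(nx+(\text{shift}))$ is added with a $+$ sign coming from the $J_n$ blocks.

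Next I would compute $X^{p,r,1}_{i,j}=\int_0^1 (N^p_{i,1})^{(r)}(N^p_{j,1})^{(r)}$ in two steps, exactly as in the Dirichlet case. Step one: for indices in the central band $i,j \in \{2p-\lfloor p/2\rfloor+1 ,\dots, n-(2p-\lfloor p/2\rfloor)\}$ (or whatever the precise interior range turns out to be) the basis functions are genuine cardinal B-splines, so the change of variables $t=nx-\cdots$ together with the inner product formula \eqref{innerprodcb} gives $X^{p,r,1}_{i,j}=n^{2r-1}(-1)^r\mathcal{N}_{2p+1}^{(2r)}(p+1+i-j)$, which is the pure Toeplitz part $n^{2r-1}T_n(g_p^{\bm\alpha})$ with $\alpha_k$ as in \eqref{alphaparam}. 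Step two: for $i$ in the first $2p-2\lfloor p/2\rfloor$ rows (near the left endpoint) the reflection term contributes an extra integral of the form $\int_0^1 \mathcal{N}_p^{(r)}(nx-(i-\tfrac12))\,\mathcal{N}_p^{(r)}(nx+(j-\tfrac12))\,\dd x$ (and its symmetric partner), and here I would invoke Lemma~\ref{integcal} to rewrite the tail integral over $[0,1]$ as a full-line integral over $\mathbb{R}$ up to a correction that vanishes under the size hypothesis $n\geq \max\{2p-\lfloor p/2\rfloor, 2p-2\lfloor p/2\rfloor+1\}$; the surviving full-line integral is again evaluated by \eqref{innerprodcb}, producing the term $n^{2r-1}(-1)^r\mathcal{N}_{2p+1}^{(2r)}(p+1+i+j-1)$, i.e.\ exactly the entries of $n^{2r-1}H_n^{\bm\alpha,1}$ (note the shift by $-1$ relative to $H_n^{\bm\alpha,2}$, which is the whole point). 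The $+$ sign is inherited from the $J_n$ blocks, as opposed to the $-J_n$ in $L_n^0,R_n^0$.

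Finally, having matched the first $2p-2\lfloor p/2\rfloor$ rows with $n^{2r-1}(T_n^{\bm\alpha}+H_n^{\bm\alpha,1})$ and the central band with $n^{2r-1}T_n^{\bm\alpha}$, I would invoke the centrosymmetry of $X^{p,r,1}$ (proved as in Lemma~\ref{anti-sy}, using the reflection identity $N^p_{i,1}(x)=N^p_{n+1-i,1}(1-x)$ that follows from the symmetric placement of the $J_n\,|\,I_n\,|\,J_n$ blocks) together with the centrosymmetry of $T_n^{\bm\alpha}+H_n^{\bm\alpha,1}$ to extend the identity to the last $2p-2\lfloor p/2\rfloor$ rows, and then symmetry closes the remaining entries. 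This yields \eqref{whatweneedtoproof1} with $\bm\alpha$ as in \eqref{alphaparam}.

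I expect the main obstacle to be the bookkeeping in step two: getting the shift in $H_n^{\bm\alpha,1}$ exactly right (the ``$+1$ vs.\ $+2$'' in the Hankel anti-diagonal index), tracking the signs through the $J_n$ blocks for both parities of $p$, and pinning down the sharp constant $C(p)=\max\{2p-\lfloor p/2\rfloor,2p-2\lfloor p/2\rfloor+1\}$ that guarantees the left- and right-boundary perturbations neither overlap each other nor leak into the central Toeplitz band — this is where Lemma~\ref{integcal} must be applied with care so that no spurious boundary remainder survives. The parity split (odd vs.\ even $p$ use different shifts $i-\tfrac12$ vs.\ $i$ in \eqref{bsplines1odd}--\eqref{bsplines1even} and a different number of boundary blocks, $\tfrac{p+1}{2}$ vs.\ $\tfrac{p}{2}$) roughly doubles the case analysis but introduces no new idea; the factor $p_0=p-2\lfloor p/2\rfloor$ should again unify the two cases in the final formula.
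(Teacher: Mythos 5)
Your proposal follows essentially the same route as the paper's proof in Appendix~\ref{sec:A2}: simplify the basis via the $J_n\,|\,I_n\,|\,J_n$ blocks, split into a central Toeplitz band and boundary rows where Lemma~\ref{integcal} plus \eqref{innerprodcb} produce the $+\,(-1)^r\mathcal{N}_{2p+1}^{(2r)}(p+1+i+j-1)$ Hankel contribution, and close with symmetry and centrosymmetry. The only slip is your count of boundary rows (the modified block has $-q=p-\left\lfloor\frac{p}{2}\right\rfloor$ rows, not $2p-2\left\lfloor\frac{p}{2}\right\rfloor$), which you already flag as bookkeeping to be pinned down and which does not affect the argument.
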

\begin{proof}
A similar proof strategy can be employed to the one for Theorem~\ref{closedformdiri} in the Dirichlet case. The detailed steps can be found in Section~\ref{sec:A2} (see Appendix~\ref{sec:A}).
\end{proof}

By combining Theorems~\ref{closed3} and~\ref{closedformneum} we directly obtain a closed-form expression for the eigenvalues and eigenvectors of the matrix $X^{p,r,1}$.
\begin{corollary}\label{corollaryeigneum}
Let $p,n\in\mathbb{N}^{\star}$ and $r\in\mathbb{N}$ such that $n\geq \max\left\{2p-\left\lfloor \frac{p}{2}\right\rfloor,2p-2\left\lfloor\frac{p}{2}\right\rfloor+1\right\}$ and $r\leq p$. Then, the eigenpairs of the matrix $X^{p,r,1}$ can be expressed as
\begin{equation*}
\lambda_j\left(X^{p,r,1}\right)=n^{2r-1}\;g^{r}_p\left(\frac{(j-1)\pi}{n}\right),\quad j=1,\ldots,n,
\end{equation*}
and
\begin{equation*}
u_{i,j}\left(X^{p,r,1}\right)=\sqrt{\frac{2}{n}}\;c_j\cos\left(\frac{(j-1)\pi}{n}\left(i-\frac{1}{2}\right)\right),\quad c_j=\left\{\begin{array}{ll}
    \frac{1}{\sqrt{2}},& j=1, \\
    1,& \text{otherwise},
\end{array}\quad i,j=1,\ldots, n,
\right.
\end{equation*}
where $g_p^{r}$ is given by \eqref{g_r}.
\end{corollary}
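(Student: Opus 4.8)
The plan is to mimic exactly the argument used for the Dirichlet case, so that the closed-form eigenpairs follow by combining the structural result (Theorem~\ref{closedformneum}) with the ready-made spectral decomposition of Toeplitz-plus-Hankel matrices (Theorem~\ref{closed3}). First I would invoke Theorem~\ref{closedformneum}, which under the hypothesis $n\geq \max\{2p-\lfloor p/2\rfloor,\,2p-2\lfloor p/2\rfloor+1\}$ gives the identity $X^{p,r,1}=n^{2r-1}\bigl(T_n^{\bm\alpha}+H_n^{\bm\alpha,1}\bigr)$ with $\alpha_k=(-1)^r\mathcal N_{2p+1}^{(2r)}(p+1-k)$. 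Since multiplying a matrix by the nonzero scalar $n^{2r-1}$ scales its eigenvalues by the same factor and leaves its eigenvectors unchanged, it suffices to read off the eigenpairs of $T_n^{\bm\alpha}+H_n^{\bm\alpha,1}$.

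Next I would apply Theorem~\ref{closed3}: its hypothesis $n\geq p+1$ is implied by the stronger assumption in Theorem~\ref{closedformneum} (for $p\geq 1$ one checks $2p-\lfloor p/2\rfloor\geq p+1$), so it is legitimate to use. Theorem~\ref{closed3} yields $\lambda_j(T_n^{\bm\alpha}+H_n^{\bm\alpha,1})=g_p^{\bm\alpha}\bigl(\tfrac{(j-1)\pi}{n}\bigr)$ and $u_{i,j}(T_n^{\bm\alpha}+H_n^{\bm\alpha,1})=\sqrt{2/n}\,c_j\cos\bigl(\tfrac{(j-1)\pi}{n}(i-\tfrac12)\bigr)$ with $c_1=1/\sqrt2$ and $c_j=1$ otherwise. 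Multiplying the eigenvalues by $n^{2r-1}$ gives the claimed formula once I identify $g_p^{\bm\alpha}$ with $g_p^r$; this last identification is immediate from the definition \eqref{g} of $g_p^{\bm\alpha}$ and the definition \eqref{g_r} of $g_p^r$, since substituting $\alpha_k=(-1)^r\mathcal N_{2p+1}^{(2r)}(p+1-k)$ into \eqref{g} (noting $\alpha_0=(-1)^r\mathcal N_{2p+1}^{(2r)}(p+1)$) reproduces \eqref{g_r} term by term. The eigenvectors are unaffected by the scalar, so they are exactly as stated.

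There is genuinely no obstacle at the level of this corollary: it is a two-line deduction from the two cited theorems together with a bookkeeping check that the size hypothesis of Theorem~\ref{closedformneum} is at least as strong as that of Theorem~\ref{closed3}. The only point deserving a word of care is verifying the inequality $\max\{2p-\lfloor p/2\rfloor,\,2p-2\lfloor p/2\rfloor+1\}\geq p+1$ for all $p\geq 1$, which one does by splitting into $p$ even and $p$ odd; this guarantees that all invocations are within their stated ranges. The real work — proving the Toeplitz-plus-Hankel structure with the sharp threshold on $n$ — is deferred to the proof of Theorem~\ref{closedformneum} in Appendix~\ref{sec:A}, where the argument parallels the Dirichlet analysis: reduce the basis description coming from \eqref{basis1podd}--\eqref{bsplines1even}, use the cardinal B-spline inner-product identity \eqref{innerprodcb} and Lemma~\ref{integcal} to evaluate the central Toeplitz block, then handle the $\lfloor p/2\rfloor$ boundary rows to produce the additive Hankel correction, and finally exploit centrosymmetry of $X^{p,r,1}$ to cover the symmetric bottom-right corner.
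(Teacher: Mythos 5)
Your proposal is correct and follows exactly the paper's route: the corollary is obtained by combining the structural identity of Theorem~\ref{closedformneum} with the spectral decomposition of Theorem~\ref{closed3}, identifying $g_p^{\bm{\alpha}}$ with $g_p^r$ via \eqref{alphaparam}, and accounting for the scalar $n^{2r-1}$. Your extra check that the size hypothesis implies $n\geq p+1$ is a harmless and sensible addition.
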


Thanks to Corollary~\ref{corollaryeigneum} and the definition of $X^{p,r,1}$ ($r=0,1$), we conclude with the following result on the eigenpairs of the system \eqref{systfindimi} related to the basis $B_1^p$.
\begin{corollary}\label{coroloryclosedform1}
Let $p,n\in\mathbb{N}^{\ast}$ such that $n\geq \max\left\{2p-\left\lfloor \frac{p}{2}\right\rfloor,2p-2\left\lfloor\frac{p}{2}\right\rfloor+1\right\}$. Then, the eigenpairs of the matrix $L_n^{p,1}=\left(M_n^{p,1}\right)^{-1}K_n^{p,1}$ can be expressed as
\begin{equation*}
\lambda_j\left(L_n^{p,1}\right)=n^{2}\;\frac{g^{1}_p\left(\frac{(j-1)\pi}{n}\right)}{g^{0}_p\left(\frac{(j-1)\pi}{n}\right)},\quad j=1,\ldots,n,
\end{equation*}
and
\begin{equation*}
u_{i,j}\left(L_n^{p,1}\right)=\sqrt{\frac{2}{n}}\;c_j\cos\left(\frac{(j-1)\pi}{n}\left(i-\frac{1}{2}\right)\right),\quad c_j=\left\{\begin{array}{ll}
    \frac{1}{\sqrt{2}},& j=1, \\
    1,& \text{otherwise},
\end{array}\quad i,j=1,\ldots, n,
\right.
\end{equation*}
where $g_p^{r}$ is given by \eqref{g_r} for $r=0,1$.
\end{corollary}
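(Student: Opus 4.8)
The plan is to derive the result of Corollary~\ref{coroloryclosedform1} as an immediate consequence of Corollary~\ref{corollaryeigneum}, exactly as was done in the Dirichlet case (Corollary~\ref{coroloryclosedform0}). First I would recall that, by definition of the general matrix $X^{p,r,1}$ in Definition~\ref{difmatrixX}, the special cases $r=0$ and $r=1$ yield precisely the mass matrix $M_n^{p,1}$ and the stiffness matrix $K_n^{p,1}$, respectively. Applying Corollary~\ref{corollaryeigneum} with $r=0$ gives $\lambda_j(M_n^{p,1})=n^{-1}g_p^0\bigl(\frac{(j-1)\pi}{n}\bigr)$ with eigenvectors $u_{i,j}=\sqrt{2/n}\,c_j\cos\bigl(\frac{(j-1)\pi}{n}(i-\tfrac12)\bigr)$, and applying it with $r=1$ gives $\lambda_j(K_n^{p,1})=n\,g_p^1\bigl(\frac{(j-1)\pi}{n}\bigr)$ with the \emph{same} eigenvectors. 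The crucial point, already encoded in Corollary~\ref{corollaryeigneum}, is that the eigenvector matrix is independent of $r$: it is the (orthogonal) DCT-II–type transform associated with the $\tau$ algebra $\tau_n(1,1)$. Hence $M_n^{p,1}$ and $K_n^{p,1}$ are \emph{simultaneously} diagonalized by this single orthogonal matrix $Q=\bigl[\sqrt{2/n}\,c_j\cos(\frac{(j-1)\pi}{n}(i-\tfrac12))\bigr]_{i,j}$.

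Next I would exploit simultaneous diagonalization to handle the generalized eigenvalue problem $L_n^{p,1}=(M_n^{p,1})^{-1}K_n^{p,1}$. Writing $M_n^{p,1}=Q D_M Q^{\mathsf T}$ and $K_n^{p,1}=Q D_K Q^{\mathsf T}$ with $D_M=\mathrm{diag}\bigl(n^{-1}g_p^0(\theta_j^{(1)})\bigr)$, $D_K=\mathrm{diag}\bigl(n\,g_p^1(\theta_j^{(1)})\bigr)$ and $\theta_j^{(1)}=\frac{(j-1)\pi}{n}$, we get $L_n^{p,1}=Q D_M^{-1} D_K Q^{\mathsf T}$, so the eigenvalues of $L_n^{p,1}$ are the ratios $\lambda_j(L_n^{p,1}) = n^2\, g_p^1(\theta_j^{(1)})/g_p^0(\theta_j^{(1)})$, $j=1,\ldots,n$, with eigenvectors given by the columns of $Q$, i.e.\ exactly the stated expression. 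One small but genuine point that must be addressed here — and the only place where anything has to be checked rather than merely substituted — is the invertibility of $M_n^{p,1}$, i.e.\ that $D_M$ has no zero entry. This follows from the positivity bound on the mass symbol $g_p^0$; I would invoke, as in the proof of Corollary~\ref{coroloryclosedform0}, the estimate from \cite{garoni2014spectrum} that $\bigl(4/\pi^2\bigr)^{p+1}\le g_p^0(\theta)\le 1$ for all $\theta\in[0,\pi]$, which in particular guarantees $g_p^0(\theta_j^{(1)})>0$ for every $j$, so the division is legitimate. (This also re-confirms $M_n^{p,1}$ is symmetric positive definite, consistent with well-posedness of the Galerkin problem \eqref{app}.)

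There is really no hard part: the entire content has been pushed into Theorem~\ref{closedformneum} (the Toeplitz-plus-Hankel structure of $X^{p,r,1}$) and Theorem~\ref{closed3} (the closed-form eigenpairs of $\tau_n(1,1)$ matrices). The only things to get right are bookkeeping items: verifying that the hypothesis $n\ge\max\{2p-\lfloor p/2\rfloor,\,2p-2\lfloor p/2\rfloor+1\}$ of Corollary~\ref{corollaryeigneum} is the one carried over verbatim (it is, since both $r=0$ and $r=1$ satisfy $r\le p$ for $p\ge1$ and require the same size bound), keeping the argument of $g_p^0$ and $g_p^1$ consistent as $\frac{(j-1)\pi}{n}$, and tracking the scaling factors $n^{2r-1}$ so that the ratio produces $n^{1}/n^{-1}=n^2$ in front. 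I would therefore present the proof in two lines: invoke Corollary~\ref{corollaryeigneum} for $r=0,1$, note the common eigenvector system simultaneously diagonalizes $M_n^{p,1}$ and $K_n^{p,1}$, and cite the bound on $g_p^0$ from \cite{garoni2014spectrum} to justify forming the quotient — mirroring the proof of Corollary~\ref{coroloryclosedform0} essentially word for word.
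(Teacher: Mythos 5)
Your proposal is correct and follows essentially the same route as the paper: the paper's proof of Corollary~\ref{coroloryclosedform1} simply refers back to the argument for Corollary~\ref{coroloryclosedform0}, i.e.\ apply Corollary~\ref{corollaryeigneum} with $r=0,1$, use the common eigenvector system to form the quotient, and invoke the bound $\left(4/\pi^2\right)^{p+1}\leq g_p^0(\theta)\leq 1$ from \cite{garoni2014spectrum} to justify dividing by $g_p^0$. Your explicit spelling-out of the simultaneous diagonalization $L_n^{p,1}=Q D_M^{-1}D_K Q^{\mathsf T}$ is just a more detailed rendering of the same argument.
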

\begin{proof}
Since the functions $g_p^0$ and $g_p^1$ are also given by \eqref{g_r}, the argument of the proof is the same as that of Corollary~\ref{coroloryclosedform0}.
\end{proof}

\begin{remark}\label{rmk:outlierfree1}
Corollary~\ref{coroloryclosedform1} gives an algebraic confirmation that
the optimal spline subspaces $\mathbb{S}_{p,n,1}^{\opt}$ lead to outlier-free discretizations for the spectrum of the Laplace operator with Neumann boundary conditions.
This follows from \eqref{exactNbc} in the same way as described in Remark~\ref{rmk:outlierfree0}. In particular, by setting $\theta_j= \frac{(j-1)\pi}{n}$, $j=2, \ldots, n$,
we get
\begin{equation*}
\dfrac{\lambda_j\left(L_n^{p,1}\right)-((j-1)\pi)^2}{((j-1)\pi)^2}=
\dfrac{n^2}{((j-1)\pi)^2}(e_p(\theta_j)-(\theta_j)^2)=\dfrac{e_p(\theta_j)-(\theta_j)^2}{(\theta_j)^2}.
\end{equation*}
Moreover, $\lambda_1\left(L_n^{p,1}\right)$ agrees with the exact value of the first eigenvalue in \eqref{exactNbc} because $g_p^1(0)=0$.
\end{remark}

\subsection{The mixed case}\label{sec:closedformmixed}
The mixed boundary conditions \eqref{Mbc} represent a combination of a Dirichlet boundary condition and a Neumann boundary condition. Since, by nature, these boundary conditions are not symmetric, the mass and stiffness matrices arising from this case are not centrosymmetric. However, these matrices can be described by combining the matrices of the previously studied boundaries and this gives rise to a new class of matrices. We also establish closed-form expressions for the eigenvalues and eigenvectors of this type of matrices. The same reasoning can be applied if we permute the mixed boundary conditions.

We define a new Hankel matrix, which combines the Hankel matrices of the Dirichlet case ($H_n^{\bm{\alpha},2}$) and the Neumann case ($H_n^{\bm{\alpha},1}$):
\begin{equation*}
H_n^{\bm{\alpha},2,1}=\begin{bmatrix}
-\alpha_2&-\alpha_3&-\alpha_4&\cdots& -\alpha_p&0 & & \\
-\alpha_3&-\alpha_4&\cdots&-\alpha_p& & & & \\
-\alpha_4&\cdots&-\alpha_p& & & & \\
\vdots&\udots&&&&&&\\
-\alpha_p& & & & & & & 0\\
 0& & & & & & &\alpha_p\\
 && & &&&\udots&\vdots\\
 &&& & &\alpha_p&\cdots&\alpha_3\\ 
 &&& &\alpha_p&\cdots&\alpha_3&\alpha_2\\
& &0 & \alpha_p&\cdots&\alpha_3&\alpha_2&\alpha_1\\
\end{bmatrix}.
\end{equation*}
The next theorem is the key to the closed-form expression of the eigenpairs considered in this section. It can be proved using the same technique as done in \cite[Theorem~2.1]{deng2021analytical}; for brevity, we omit the details.
\begin{theorem}\label{closedformpropmixd}
Let $n,p\in\mathbb{N}^{\star}$ such that $n\geq p+1$. Then, the eigenpairs of the matrix $T_n^{\bm{\alpha}}+H_n^{\bm{\alpha},2,1}$ can be expressed as 
\begin{equation*}
\lambda_j\left(T_n^{\bm{\alpha}}+H_n^{\bm{\alpha},2,1}\right)=g_p^{\bm{\alpha}}\left(\frac{(2j-1)\pi}{2n+1}\right),\quad j=1,\ldots, n,
\end{equation*}
and
\begin{equation*}
u_{i,j}\left(T_n^{\bm{\alpha}}+H_n^{\bm{\alpha},2,1}\right)=\sqrt{\frac{4}{2n+1}}\sin\left(\frac{i(2j-1)\pi}{2n+1}\right),\quad i,j=1,\ldots, n,
\end{equation*}
where $g_p^{\bm{\alpha}}$ is given by \eqref{g}.   
\end{theorem}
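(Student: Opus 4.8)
The plan is to diagonalize $T_n^{\bm{\alpha}}+H_n^{\bm{\alpha},2,1}$ explicitly by guessing the eigenvectors and verifying the eigenrelation entrywise. The candidate eigenvectors are $\bm{v}_j$ with components $v_{i,j}=\sin\!\left(\frac{i(2j-1)\pi}{2n+1}\right)$, $i=1,\ldots,n$; up to the normalization constant $\sqrt{4/(2n+1)}$ these are the discrete-sine-type vectors naturally associated with the ``quarter-wave'' boundary behaviour $u(0)=u'(1)=0$ of the mixed problem. First I would fix $j$, write $\theta_j=\frac{(2j-1)\pi}{2n+1}$, and compute the $i$-th component of $(T_n^{\bm{\alpha}}+H_n^{\bm{\alpha},2,1})\bm{v}_j$. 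The Toeplitz part contributes $\sum_{k=-p}^{p}\alpha_{|k|}\sin((i-k)\theta_j)$, which by the product-to-sum identity $\sin((i-k)\theta)=\sin(i\theta)\cos(k\theta)-\cos(i\theta)\sin(k\theta)$ and evenness of $\alpha_{|k|}$ collapses to $g_p^{\bm{\alpha}}(\theta_j)\sin(i\theta_j)$ in the ``bulk'' rows (those far from both boundaries), exactly as for ordinary symmetric Toeplitz matrices. The whole content of the theorem is that the Hankel correction $H_n^{\bm{\alpha},2,1}$ precisely cancels the boundary defects at \emph{both} ends.

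The key steps, in order: (1) handle the bulk rows and confirm the symbol value $g_p^{\bm{\alpha}}(\theta_j)$ appears; (2) analyze the top rows $i=1,\ldots,p$, where the Toeplitz sum is missing the terms with index $\leq 0$ — here one uses the odd reflection $\sin((i-k)\theta_j)=-\sin((k-i)\theta_j)$ to re-express the missing contributions, and checks that the upper-left block of $H_n^{\bm{\alpha},2,1}$ (the one with entries $-\alpha_2,-\alpha_3,\ldots$) supplies exactly the term needed so that the defect telescopes to zero; the key arithmetic fact is that $\sin((i-k)\theta_j)$ for $k$ in the ``wrapped'' range matches $-\sin((\text{reflected index})\theta_j)$ because $\sin(m\theta_j)$ satisfies an antisymmetry about the index $0$, which is automatic; (3) handle the bottom rows $i=n-p+1,\ldots,n$, where the missing Toeplitz terms have index $>n$ and the lower-right block of $H_n^{\bm{\alpha},2,1}$ (entries $+\alpha_p,\ldots,+\alpha_1$) must cancel them — this time the relevant identity is $\sin((2n+1-m)\theta_j)=\sin(m\theta_j)$, which holds because $(2n+1)\theta_j=(2j-1)\pi$ is an odd multiple of $\pi$, so $\sin((2n+1)\theta_j-m\theta_j)=\sin((2j-1)\pi)\cos(m\theta_j)-\cos((2j-1)\pi)\sin(m\theta_j)=\sin(m\theta_j)$. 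This sign pattern ($-$ at the top, $+$ at the bottom) is precisely the mixture of the Dirichlet and Neumann Hankel blocks, and it is dictated by the two different reflection identities at the two ends. (4) Finally, the $n$ chosen frequencies $\theta_1,\ldots,\theta_n$ are distinct in $(0,\pi)$, the vectors $\bm{v}_j$ are pairwise orthogonal (this is the completeness/orthogonality of the mixed discrete sine transform, which can be checked directly or cited), so we obtain a full set of $n$ eigenpairs; dividing by $\|\bm{v}_j\|_2=\sqrt{(2n+1)/4}$ gives the stated normalization.

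The condition $n\geq p+1$ enters exactly as in Theorem~\ref{closed1}: it guarantees that the top-boundary band (width $p$) and the bottom-boundary band (width $p$) do not overlap, so each row is either a bulk row or affected by only one of the two Hankel blocks, and the zero entries displayed in the definitions of $T_n^{\bm{\alpha}}$, $H_n^{\bm{\alpha},2,1}$ are genuinely zero. I expect the main obstacle to be the careful bookkeeping of indices in steps (2) and (3): one has to match, row by row, the finitely many ``wrapped'' Toeplitz terms that fall outside $\{1,\ldots,n\}$ against the nonzero entries of the two anti-triangular Hankel blocks, and verify the cancellation respects the signs. Since the Dirichlet case ($T_n^{\bm{\alpha}}-H_n^{\bm{\alpha},2}$, Theorem~\ref{closed1}) already does this cancellation at the $u(0)=0$ end with frequencies $j\pi/(n+1)$, and the Neumann case does it at the $u'(1)=0$ end, the proof is essentially the observation that the two independent cancellations can be superposed once the common frequency set $\theta_j=(2j-1)\pi/(2n+1)$ is chosen to be simultaneously compatible with a ``$\sin$, odd at $0$'' condition and a ``symmetric-about-$n+\tfrac12$'' condition — which is exactly the content of $(2n+1)\theta_j\in\pi(2\mathbb{Z}+1)$. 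Hence the cleanest writeup cites \cite[Theorem~2.1]{deng2021analytical} for the mechanics and only records the modified boundary identities, as the statement already indicates.
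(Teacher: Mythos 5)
Your proposal is correct and matches the paper's intended argument: the paper itself omits the proof of this theorem, pointing only to the direct trigonometric verification of \cite[Theorem~2.1]{deng2021analytical}, which is exactly what you carry out — the two reflection identities $\sin(-m\theta_j)=-\sin(m\theta_j)$ at the left end and $\sin((2n+1-m)\theta_j)=\sin(m\theta_j)$ at the right end (the latter because $(2n+1)\theta_j$ is an odd multiple of $\pi$) account precisely for the $-/+$ sign pattern of the two anti-triangular Hankel blocks. One small caveat: for $p+1\le n\le 2p-2$ a single row near the boundary can receive corrections from \emph{both} Hankel blocks (only individual matrix entries, not whole rows, are guaranteed to belong to a single block under $n\ge p+1$), but since the two cancellations act on disjoint sets of missing Toeplitz terms this does not affect your verification.
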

\begin{remark}
From Appendix~\ref{sec:B} it is clear that the matrix $T_n^{\bm{\alpha}}+H_n^{\bm{\alpha},2,1}$ falls within the matrix algebra $\tau_n(0,1)$. This algebra is  referred to as the algebra $\Gamma$ in \cite{di1995matrix} and initially introduced in \cite{bapat1991hypergroups}.
\end{remark}

Now, with all the necessary prerequisites, we present the main result on the Toeplitz-plus-Hankel form of the matrix $X^{p,r,2}$ in Definition~\ref{difmatrixX}.
\begin{theorem}\label{closedformmixed}
Let $p,n\in\mathbb{N}^{\star}$ and $r\in\mathbb{N}$ such that $n\geq \max\left\{p+1,p+\left\lfloor\frac{p}{2}\right\rfloor\right\}$ and $r\leq p$. Then,
\begin{equation*}
X^{p,r,2}=\left(\frac{2n+1}{2}\right)^{2r-1}\left(T_n^{\bm{\alpha}}+H_n^{\bm{\alpha},2,1}\right),
\end{equation*}
where the elements of the vector $\bm{\alpha}$ are given by \eqref{alphaparam}.
\end{theorem}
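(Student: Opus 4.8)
The plan is to mimic the structure of the Dirichlet case (Theorem~\ref{closedformdiri}) and the Neumann case (Theorem~\ref{closedformneum}), since the mixed boundary conditions \eqref{Mbc} are a hybrid: Dirichlet at $0$ and Neumann at $1$. First I would write down a simplified version of the basis $B_2^p$ from \eqref{basis2podd} and \eqref{basis2peven}, unifying the even/odd cases via the parameter $p_0 = p - 2\lfloor p/2\rfloor$ as was done in the proof of Lemma~\ref{anti-sy}; the key observation is that $N^p_{i,2}$ for indices in the "interior" is simply a translated cardinal B-spline $\mathcal{N}_p((\tfrac{2n+1}{2})x - \text{shift})$, so that by the inner-product property \eqref{innerprodcb}, the central block of $X^{p,r,2}$ is exactly Toeplitz:
\begin{equation*}
X^{p,r,2}_{i,j} = \left(\tfrac{2n+1}{2}\right)^{2r-1}(-1)^r \mathcal{N}_{2p+1}^{(2r)}(p+1+i-j)
\end{equation*}
for $i,j$ away from both boundaries, say $\lfloor p/2\rfloor + 1 \le i,j \le n - \lfloor p/2\rfloor$. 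This is literally the same computation as the first step in the Dirichlet proof, because the scaling factor $\tfrac{2n+1}{2}$ plays the role of $n+1$ there; Lemma~\ref{integcal} is the main tool. The condition $n \ge p+1$ guarantees this interior block is nonempty and that the left-boundary and right-boundary perturbations do not interact.

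Second, I would analyze the boundary blocks separately. Near $x=0$ the basis is built from the blocks $L^2_n = [-I_n \mid -J_n \mid O_n]$ arranged with alternating signs $L^2_n, -L^2_n, L^2_n, \ldots$, exactly mirroring the Dirichlet boundary treatment but with the reflection matrix $-J_n$ instead of $-J_n$ appearing with a sign pattern that, after taking inner products, produces the term $-(-1)^r \mathcal{N}_{2p+1}^{(2r)}(p+1+i+j)$ — i.e. the $H_n^{\bm\alpha,2}$-type contribution, because the Dirichlet condition at $0$ forces odd-reflection about $0$ which, combined with the cardinal B-spline symmetry \eqref{symmetrycb}, shifts the reflected index by one extra unit (hence $H^{\bm\alpha,2}$ not $H^{\bm\alpha,1}$). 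Near $x=1$ the blocks $R^2_n = [J_n \mid O_n \mid -I_n]$ with pattern $R^2_n, -R^2_n, R^2_n, \ldots$ encode the Neumann condition, and the same computation yields the $+H_n^{\bm\alpha,1}$-type contribution reflected about the index $n$. Concretely I would show:
\begin{equation*}
X^{p,r,2}_{i,j} = \left(\tfrac{2n+1}{2}\right)^{2r-1}\Big((-1)^r\mathcal{N}_{2p+1}^{(2r)}(p+1+i-j) - (-1)^r\mathcal{N}_{2p+1}^{(2r)}(p+1+i+j)\Big)
\end{equation*}
for $1 \le i \le \lfloor p/2\rfloor$, $j \ge i$, and the mirrored statement near $j=n$ with a $+$ sign. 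Assembling the Toeplitz part $T_n^{\bm\alpha}$ with $\alpha_k = (-1)^r\mathcal{N}_{2p+1}^{(2r)}(p+1-k)$ and the two Hankel corners gives precisely $T_n^{\bm\alpha} + H_n^{\bm\alpha,2,1}$, where the upper-left corner carries the minus signs (the $-\alpha_k$ entries of $H_n^{\bm\alpha,2,1}$) from the Dirichlet side and the lower-right corner carries the plus signs from the Neumann side.

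Third, I would verify that the sharpened lower bound $n \ge \max\{p+1,\, p + \lfloor p/2\rfloor\}$ is exactly what is needed so that the "$0$" entries in \eqref{Toplitzalpha} and in $H_n^{\bm\alpha,2,1}$ are genuinely zero, i.e. that the support overlaps producing the Hankel corrections reach no farther than allowed and that the left and right boundary corrections do not collide — this is the bookkeeping analogue of the condition $n \ge p + \lfloor p/2\rfloor - 1$ in the Dirichlet case, shifted because the mixed space has a slightly different boundary footprint on the Neumann side (compare the Neumann bound $n \ge 2p - \lfloor p/2\rfloor$). \textbf{The main obstacle} I expect is precisely this last step: unlike the Dirichlet and Neumann cases, the matrix $X^{p,r,2}$ is \emph{not} centrosymmetric (as noted before Theorem~\ref{closedformpropmixd}), so I cannot use a symmetry shortcut to deduce the right-boundary block from the left-boundary block — both corners must be computed independently, and I must carefully track the asymmetric sign conventions in $L^2_n$ versus $R^2_n$ and the asymmetric index shifts coming from Dirichlet-at-$0$ versus Neumann-at-$1$. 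Once the structure $X^{p,r,2} = (\tfrac{2n+1}{2})^{2r-1}(T_n^{\bm\alpha}+H_n^{\bm\alpha,2,1})$ is established, the closed-form eigenpairs follow immediately from Theorem~\ref{closedformpropmixd}.
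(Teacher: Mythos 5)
Your plan reproduces the paper's own proof essentially step for step: a central Toeplitz block computed via \eqref{innerprodcb} and Lemma~\ref{integcal}, a Dirichlet-type Toeplitz-minus-Hankel corner at the left boundary, a Neumann-type Toeplitz-plus-Hankel corner at the right boundary, and the explicit recognition that the loss of centrosymmetry forces both corners to be treated independently (the paper's ``additional step''), after which Theorem~\ref{closedformpropmixd} yields the eigenpairs. The only minor imprecision is the stated extent of the interior block --- the Neumann side's footprint is $\left\lfloor\frac{p}{2}\right\rfloor+p_0$ wide, so the Toeplitz block ends at column $n+q=n-\left\lceil\frac{p}{2}\right\rceil$ rather than $n-\left\lfloor\frac{p}{2}\right\rfloor$ --- a bookkeeping detail you already flag as the asymmetry to be tracked in your third step.
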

\begin{proof}
We can again employ a similar proof strategy as in the previous boundary cases (Theorems~\ref{closedformdiri} and~\ref{closedformneum}).
However, due to the lack of centrosymmetry of the matrix $X^{p,r,2}$, we need to do some extra work in our proof. More precisely, we divide the second step into two parts: the first part is akin to the Dirichlet case, while the second part resembles the Neumann case. The detailed steps are provided in Section~\ref{sec:A3} (see Appendix~\ref{sec:A}). 
\end{proof}

Since the structure considered in Theorem~\ref{closedformmixed} is a specific case of Theorem~\ref{closedformpropmixd}, we directly arrive at the following result.
\begin{corollary}\label{corollaryeigmixed}
Let $p,n\in\mathbb{N}^{\star}$ and $r\in\mathbb{N}$ such that $n\geq \max\left\{p+1,p+\left\lfloor\frac{p}{2}\right\rfloor\right\}$ and $r\leq p$. Then, the eigenpairs of the matrix $X^{p,r,2}$ can be expressed as
\begin{equation*}
\lambda_j\left(X^{p,r,2}\right)=\left(\frac{2n+1}{2}\right)^{2r-1}g_p^{r}\left(\frac{(2j-1)\pi}{2n+1}\right),\quad j=1,\ldots, n,
\end{equation*}
and
\begin{equation*}
u_{i,j}\left(X^{p,r,2}\right)=\sqrt{\frac{4}{2n+1}}\sin\left(\frac{i(2j-1)\pi}{2n+1}\right),\quad i,j=1,\ldots, n,
\end{equation*}
where $g_p^{r}$ is given by \eqref{g_r}.
\end{corollary}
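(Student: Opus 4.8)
The plan is to follow the same two-step strategy already used for Theorems~\ref{closedformdiri} and~\ref{closedformneum}, but to carefully track the asymmetry coming from the mixed boundary conditions. First I would write down the simplified form of the basis $B_2^p$ derived from \eqref{basis2podd} and \eqref{basis2peven}, separating the "bulk" basis functions (those that are plain translates $N^p_i$ of the cardinal B-spline on the grid with spacing $\tfrac{2}{2n+1}$, for indices $i$ away from both endpoints) from the "boundary-corrected" ones near $x=0$ and near $x=1$. The entries $X^{p,r,2}_{i,j}=\int_0^1 (N^p_{i,2})^{(r)}(N^p_{j,2})^{(r)}$ are then finite linear combinations of integrals of products of derivatives of shifted cardinal B-splines, which by the inner-product identity \eqref{innerprodcb} reduce to evaluations $(-1)^r\mathcal{N}^{(2r)}_{2p+1}(p+1+\text{shift})$. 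The scaling factor $\left(\tfrac{2n+1}{2}\right)^{2r-1}$ comes from the chain rule: each derivative contributes a factor $\tfrac{2n+1}{2}$ and the change of variables in the integral contributes $\left(\tfrac{2n+1}{2}\right)^{-1}$.

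Step one: show that in the interior block, say for $i,j$ in a central range of indices (roughly $\lfloor p/2\rfloor+1 \le i,j \le n-\lfloor p/2\rfloor$ or the appropriate analogue), the boundary corrections do not overlap with the supports involved, so $X^{p,r,2}_{i,j} = \left(\tfrac{2n+1}{2}\right)^{2r-1}(-1)^r \mathcal{N}^{(2r)}_{2p+1}(p+1+i-j)$, which is exactly the Toeplitz part $T_n^{\bm\alpha}$ with $\alpha_k$ as in \eqref{alphaparam}. Step two: handle the two boundary strips. Near $x=0$ the reflection built into $L^2_n$ (with its $-I_n$, $-J_n$ pattern) produces, via Lemma~\ref{integcal} (reflection $t\mapsto p+1-t-\rho$) and \eqref{innerprodcb}, an extra term $-(-1)^r\mathcal{N}^{(2r)}_{2p+1}(p+1+i+j-1)$ or similar, matching the $-\alpha_k$ pattern in the upper-left corner of $H_n^{\bm\alpha,2,1}$; near $x=1$ the reflection built into $R^2_n$ produces an extra term with the opposite sign, matching the $+\alpha_k$ pattern in the lower-right corner of $H_n^{\bm\alpha,2,1}$. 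Because the two boundaries are treated differently (Dirichlet-type at $0$, Neumann-type at $1$), one cannot simply invoke centrosymmetry as in the Dirichlet and Neumann cases; instead each corner must be computed directly, and one must check that under the hypothesis $n\ge\max\{p+1,\,p+\lfloor p/2\rfloor\}$ the two boundary strips are disjoint and each interacts only with the nearer boundary, so that no cross-terms or higher-order perturbations appear.

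The main obstacle I anticipate is exactly this bookkeeping of index ranges and sign conventions at the two asymmetric corners: one has to verify that the lower bound $n\ge p+\lfloor p/2\rfloor$ is sharp enough that the support of each boundary-corrected basis function $N^p_{i,2}$ overlaps at most one of the two mirrored copies, and to get the shift arguments of $\mathcal{N}_{2p+1}^{(2r)}$ exactly right so that the Hankel anti-diagonals land in the prescribed positions of $H_n^{\bm\alpha,2,1}$ — in particular confirming that the "$-$" block occupies precisely the first $\lfloor p/2\rfloor$ or so rows/columns associated with the Dirichlet end and the "$+$" block the last ones associated with the Neumann end, with the correct transition. The parity split (even vs.\ odd $p$), encoded via $p_0=p-2\lfloor p/2\rfloor$ as in the proof of Lemma~\ref{anti-sy}, also has to be unified carefully, since the grids and index sets in \eqref{basis2podd}–\eqref{basis2peven} differ slightly.

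Once the identity $X^{p,r,2}=\left(\tfrac{2n+1}{2}\right)^{2r-1}(T_n^{\bm\alpha}+H_n^{\bm\alpha,2,1})$ is established, the closed-form eigenpairs follow immediately: Theorem~\ref{closedformpropmixd} gives $\lambda_j(T_n^{\bm\alpha}+H_n^{\bm\alpha,2,1})=g_p^{\bm\alpha}\!\left(\tfrac{(2j-1)\pi}{2n+1}\right)$ with eigenvectors $u_{i,j}=\sqrt{\tfrac{4}{2n+1}}\sin\!\left(\tfrac{i(2j-1)\pi}{2n+1}\right)$, and since $g_p^{\bm\alpha}=g_p^r$ when $\bm\alpha$ is chosen as in \eqref{alphaparam} (the same observation used for Corollaries~\ref{corollaryeigdiri} and~\ref{corollaryeigneum}), multiplying the eigenvalues by the scalar $\left(\tfrac{2n+1}{2}\right)^{2r-1}$ and leaving the eigenvectors unchanged yields Corollary~\ref{corollaryeigmixed} verbatim. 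The proof of Theorem~\ref{closedformmixed} is therefore essentially the corner computation described above, which is why the detailed version is deferred to Section~\ref{sec:A3}.
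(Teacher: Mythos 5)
Your proposal is correct and follows essentially the same route as the paper: establish $X^{p,r,2}=\left(\frac{2n+1}{2}\right)^{2r-1}\left(T_n^{\bm{\alpha}}+H_n^{\bm{\alpha},2,1}\right)$ by a Toeplitz interior block plus two separately computed asymmetric corner blocks (Dirichlet-like at $x=0$, Neumann-like at $x=1$, with no centrosymmetry shortcut), then apply Theorem~\ref{closedformpropmixd} together with $g_p^{\bm{\alpha}}=g_p^r$ for $\bm{\alpha}$ as in \eqref{alphaparam}. The only caveat is the shift you wrote for the $x=0$ corner: the Dirichlet-type Hankel contribution there is $-(-1)^r\mathcal{N}_{2p+1}^{(2r)}(p+1+i+j)$ (starting from $\alpha_2$), not $p+1+i+j-1$, consistent with the upper-left block of $H_n^{\bm{\alpha},2,1}$ that you correctly identify as the target.
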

The closed form of the eigenpairs of the system \eqref{systfindimi} related to the basis $B_2^p$ follows from the above corollary, taking $r=0,1$.
\begin{corollary}\label{coroloryclosedform2}
Let $p,n\in\mathbb{N}^{\ast}$ such that $n\geq \max\left\{p+\left\lfloor\frac{p}{2}\right\rfloor,p+1\right\}$. Then, the eigenpairs of the matrix
$L_n^{p,2}=\left(M_n^{p,2}\right)^{-1}K_n^{p,2}$ can be expressed as
\begin{equation*}
\lambda_j\left(L_n^{p,2}\right)=\left(\frac{2n+1}{2}\right)^{2}\frac{g_p^{1}\left(\frac{(2j-1)\pi}{2n+1}\right)}{g_p^{0}\left(\frac{(2j-1)\pi}{2n+1}\right)},\quad j=1,\ldots, n,
\end{equation*}
and
\begin{equation*}
u_{i,j}\left(L_n^{p,2}\right)=\sqrt{\frac{4}{2n+1}}\sin\left(\frac{i(2j-1)\pi}{2n+1}\right),\quad  i,j=1,\ldots, n,
\end{equation*}
where $g_p^{r}$ is given by \eqref{g_r} for $r=0,1$.
\end{corollary}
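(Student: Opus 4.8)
The plan is to mirror the proof of Corollary~\ref{coroloryclosedform0}, now using the mixed-case ingredients already established. First I would recall that $M_n^{p,2}=X^{p,0,2}$ and $K_n^{p,2}=X^{p,1,2}$, so both matrices are covered by Corollary~\ref{corollaryeigmixed} (itself obtained by combining Theorem~\ref{closedformmixed} with Theorem~\ref{closedformpropmixd}) under the hypothesis $n\geq\max\{p+1,p+\lfloor p/2\rfloor\}$. The crucial observation is that Corollary~\ref{corollaryeigmixed} provides, for \emph{both} $r=0$ and $r=1$, the same family of eigenvectors
\[
u_{i,j}=\sqrt{\tfrac{4}{2n+1}}\,\sin\!\Big(\tfrac{i(2j-1)\pi}{2n+1}\Big),\qquad i,j=1,\ldots,n,
\]
with only the eigenvalues depending on $r$. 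Collecting these vectors as the columns of a matrix $Q$, one checks that $Q$ is real orthogonal — it is the discrete sine-type transform associated with the algebra $\tau_n(0,1)$ — so that $Q^{-1}=Q^{T}$, and $M_n^{p,2}$ and $K_n^{p,2}$ are simultaneously diagonalized by $Q$.

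Then I would write the spectral decompositions $M_n^{p,2}=Q D_0 Q^{T}$ and $K_n^{p,2}=Q D_1 Q^{T}$, where, setting $\theta_j=\tfrac{(2j-1)\pi}{2n+1}$, the diagonal entries of $D_0$ are $\big(\tfrac{2n+1}{2}\big)^{-1} g_p^{0}(\theta_j)$ and those of $D_1$ are $\big(\tfrac{2n+1}{2}\big)\, g_p^{1}(\theta_j)$, for $j=1,\ldots,n$. Since the two decompositions share the same orthogonal factor $Q$, the product collapses to
\[
L_n^{p,2}=\big(M_n^{p,2}\big)^{-1}K_n^{p,2}=Q\,D_0^{-1}D_1\,Q^{T},
\]
from which one reads off immediately that $\lambda_j\big(L_n^{p,2}\big)=\big(\tfrac{2n+1}{2}\big)^{2}\,g_p^{1}(\theta_j)/g_p^{0}(\theta_j)$ and that the eigenvectors are exactly the $u_{i,j}$ above.

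The only point that genuinely requires verification is the invertibility of $D_0$, i.e.\ that $g_p^{0}(\theta_j)\neq 0$ for $j=1,\ldots,n$. But the sample points $\theta_j$ all lie in $(0,\pi)$, and, as recalled in the proof of Corollary~\ref{coroloryclosedform0}, $(4/\pi^2)^{p+1}\le g_p^{0}(\theta)\le 1$ for all $\theta\in[0,\pi]$ (see \cite{garoni2014spectrum}); hence the division by $g_p^{0}(\theta_j)$ is legitimate and $L_n^{p,2}$ is well defined. I do not expect any substantial obstacle here: the real work — the Toeplitz-plus-Hankel structure of $X^{p,r,2}$ and the eigenpairs of $\tau_n(0,1)$ matrices — has already been carried out in Theorems~\ref{closedformmixed} and~\ref{closedformpropmixd}, and this corollary is merely the $r=0,1$ specialization together with the positivity of the mass symbol $g_p^{0}$.
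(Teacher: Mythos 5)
Your proposal is correct and follows essentially the same route as the paper: the corollary is obtained by specializing Corollary~\ref{corollaryeigmixed} to $r=0,1$, noting that $M_n^{p,2}$ and $K_n^{p,2}$ are simultaneously diagonalized by the same orthogonal matrix $Q_n(0,1)$, and invoking the lower bound $(4/\pi^2)^{p+1}\le g_p^{0}(\theta)$ from \cite{garoni2014spectrum} to justify the division — exactly the argument the paper uses for Corollary~\ref{coroloryclosedform0} and then reuses here. Your write-up merely makes the simultaneous-diagonalization step more explicit than the paper's one-line justification.
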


\begin{remark}\label{rmk:outlierfree2}
Corollary~\ref{coroloryclosedform2} gives an algebraic confirmation that
the optimal spline subspaces $\mathbb{S}_{p,n,2}^{\opt}$ lead to outlier-free discretizations for the spectrum of the Laplace operator with mixed boundary conditions.
This follows from \eqref{exactMbc} in the same way as described in Remark~\ref{rmk:outlierfree0}. In particular, by setting $\theta_j= \frac{(2j-1)\pi}{2n+1}$, $j=1, \ldots, n$,
we get
\begin{equation*}
\dfrac{4\lambda_j\left(L_n^{p,1}\right)-((2j-1)\pi)^2}{((2j-1)\pi)^2}=
\dfrac{(2n+1)^2}{((2j-1)\pi)^2}(e_p(\theta_j)-(\theta_j)^2)=\dfrac{e_p(\theta_j)-(\theta_j)^2}{(\theta_j)^2}.
\end{equation*}
\end{remark}

\section{Other outlier-free spline spaces}\label{sec:otherspaces}
Upon completing the derivation of closed-form expressions for the eigenpairs of the discretization matrices in optimal spline subspaces, 
we now aim to illustrate that a similar analysis can be seamlessly applied to other (univariate) reduced spline spaces, known in the outlier-free IGA literature. This reinforces the flexibility and applicability of our methodology for addressing various scenarios. Furthermore, we expand all our findings from the univariate to the multivariate tensor-product setting. 
For simplicity of presentation, in this section, we concentrate exclusively on Dirichlet boundary conditions but similar results hold for other boundary conditions as well.

\subsection{Reduced spline spaces}\label{sec:reducedspaces}
Here, we address the reduced spline spaces considered in \cite{hiemstra2021removal} (see also \cite{manni2022application,sogn2019robust,takacs2016approximation}). We start by defining these spaces in the Dirichlet case \eqref{Dbc} and we recall the B-spline-like basis from \cite{manni2022application}. Subsequently, we present the main result, which directly stems from the analysis provided for the optimal spline subspace addressed in Section~\ref{sec:closedformneum}.

Let us approximate the eigenvalue problem \eqref{pd1} under the Dirichlet boundary conditions \eqref{Dbc} using the reduced spline space $\overline{\mathbb{S}}_{p,n,0}$, given by 
\begin{equation*}
\overline{\mathbb{S}}_{p,n,0}=\mathbb{S}_{p,\overline{\bm{\tau}},0}^{p-1},
\end{equation*}
where $\mathbb{S}_{p,\overline{\bm{\tau}},0}^{p-1}$ is defined in \eqref{S0l}, taking $\ell=p-1$, and $\overline{\bm{\tau}}$ corresponds to the uniform grid
\begin{equation*}
\overline{\bm{\tau}}=\left(0,\frac{1}{n},\frac{2}{n},\ldots,\frac{n-1}{n},1 \right).
\end{equation*}
It has been observed numerically in \cite{hiemstra2021removal} that these spaces are outlier-free. Here we confirm these observations theoretically and actually provide closed-form expressions for the discrete eigenvalues and eigenvectors of the matrices arising from such discretizations.

We first remark that $\mathbb{S}_{p,\overline{\bm{\tau}},0}^{p}=\mathbb{S}_{p,\overline{\bm{\tau}},0}^{p-1}$ and $\overline{\mathbb{S}}_{p,n+1,0}=\mathbb{S}_{p,n,0}^{\opt}$ for $p$ odd;
see also \cite[Remark~4.1]{manni2022application}.
Therefore, the results from Section~\ref{sec:closedformdiri} are valid here for $p$ odd and  it suffices to restrict the analysis to the case of $p$ even. Hereafter, we assume that $p$ is even in this section, and we have $\dim\left(\overline{\mathbb{S}}_{p,n,0}\right)=n$.
From \cite{manni2022application}, the B-spline-like basis $\overline{B}_0^p=\left\{\overline{N}^{p}_{i,0},\ i=1,\ldots,n\right\}$ is computed as
\begin{equation}\label{basisrpeven}
\begin{bmatrix}
\overline{N}^p_{1,0}\\
\overline{N}^p_{2,0}\\
\vdots\\
\overline{N}^p_{n,0}\\
\end{bmatrix}= \begin{bmatrix}
\underbrace{\overbrace{\cdots\bigg\vert \overline{L}_{n} \bigg\vert \overline{L}_{n}\bigg\vert}^{\frac{p}{2}}  I_{n}  \overbrace{ \bigg\vert  \overline{R}_{n}  \bigg\vert \overline{R}_{n}     \bigg\vert   \cdots  }^{\frac{p}{2}}  }_{n+p}
\end{bmatrix}\begin{bmatrix}
N^p_{-p}\\
\vdots\\
N^p_{0}\\
N^p_{1}\\
\vdots\\
N^p_{n-1}\\
\end{bmatrix},
\end{equation}
where the set $\left\{ N_{i}^p,\ i=-p,\ldots,n-1  \right\}$
is derived from the cardinal B-spline $\mathcal{N}_p$ in \eqref{eq:cardinal-B-spline-2} as
\begin{equation*}
N_{i}^p(x)=\mathcal{N}_p\left(nx-i\right),\quad i=-p,\ldots,n-1,
\end{equation*}
and the matrices $ \overline{L}_{n}$ and $\overline{R}_{n}$ are defined by
\begin{equation*}
\overline{L}_{n}=\begin{bmatrix}
I_{n}   \bigg\vert   -J_{n}
\end{bmatrix},\quad \overline{R}_{n}=\begin{bmatrix}
-J_{n}  \bigg\vert   I_{n}
\end{bmatrix}.
\end{equation*}

Let $\overline{M}_n^{p,0}$ and $\overline{K}_n^{p,0}$ denote the mass matrix and the stiffness matrix, respectively, related to the basis $\overline{B}_0^p$.
For the sake of unification, similar to Definition~\ref{difmatrixX}, we introduce a general matrix $\overline{X}^{p,r,0}$.
\begin{definition}\label{difmatrixreduced}
Let $p,n\in \mathbb{N}^{\star}$ and $r\in\mathbb{N}$ such that $p$ even and $r \leq p$.
Let $\overline{B}_0^p=\left\{\overline{N}^{p}_{i,0},\ i=1,\ldots,n\right\}$ be the basis constructed in \eqref{basisrpeven}.
We consider the matrix
$\overline{X}^{p,r,0}=\left( \overline{X}^{p,r,0}_{i,j}\right)_{1\leq i,j\leq n}$, whose elements are given by
\begin{equation*}
\overline{X}^{p,r,0}_{i,j} = \displaystyle\int_0^1 (\overline{N}^{p}_{i,0})^{(r)}(x)(\overline{N}^{p}_{j,0})^{(r)}(x)\;{\dd}x,\quad  i,j=1,\ldots,n.
\end{equation*}
In particular, in the cases $r=0,1$, we have $\overline{X}^{p,r,0}=\overline{M}_n^{p,0}$ and $\overline{X}^{p,r,0}=\overline{K}_n^{p,0}$.
\end{definition}

Drawing upon the proof of Lemma~\ref{anti-sy}, one can check that the matrix $\overline{X}^{p,r,0}$ is centrosymmetric without any restriction on the size $n$.
In the next theorem, we establish that $\overline{X}^{p,r,0}$ takes the form of $T_n^{\bm{\alpha}}-H_n^{\bm{\alpha},1}$, where $T_n^{\bm{\alpha}}$  and $H_n^{\bm{\alpha},1}$  are defined by \eqref{Toplitzalpha} and \eqref{hankelmatricesdif}, respectively. Then, by applying Theorem~\ref{closed2}, we obtain a closed-form expression for the eigenvalues and eigenfunctions.
\begin{theorem}\label{closedformreduced}
Let $p,n\in\mathbb{N}^{\star}$ and $r\in\mathbb{N}$ such that $p$ even, $n\geq p+\frac{p}{2}$, and $r\leq p$. Then,
\begin{equation}\label{redfinwhat}
\overline{X}^{p,r,0}=n^{2r-1}\left(T_n^{\bm{\alpha}}-H_n^{\bm{\alpha},1}\right),
\end{equation}
where the elements of the vector $\bm{\alpha}$ are given by \eqref{alphaparam}.
\end{theorem}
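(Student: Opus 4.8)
\textbf{Proof plan for Theorem~\ref{closedformreduced}.}
The plan is to mimic the structure of the Neumann case (Theorem~\ref{closedformneum}), since the reduced space $\overline{\mathbb{S}}_{p,n,0}$ with $p$ even is built from the very same cardinal B-splines $N_i^p(x)=\mathcal N_p(nx-i)$ that appear in the Neumann basis \eqref{basis1peven}, only with the boundary reflection matrices $\overline L_n=[I_n\,|\,-J_n]$ and $\overline R_n=[-J_n\,|\,I_n]$ replacing the Neumann ones $[J_n\,|\,I_n\,|\,J_n]$. First I would expand \eqref{basisrpeven} to obtain a simplified, explicit description of $\overline N^p_{i,0}$ as a signed combination of translated cardinal B-splines: for an interior index $i$ (precisely $\lfloor p/2\rfloor<i\le n-\lfloor p/2\rfloor$, using $n\ge p+\tfrac p2$) one has $\overline N^p_{i,0}=N^p_{i-1}$, while for $i\le\lfloor p/2\rfloor$ there is an extra reflected contribution coming from the $\overline L_n$ blocks, of the form $\overline N^p_{i,0}=N^p_{i-1}-\sum_{\text{reflected terms}}N^p_{-i-1+\text{shift}}$, and symmetrically near the right end.

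Next I would compute $\overline X^{p,r,0}_{i,j}=\int_0^1(\overline N^p_{i,0})^{(r)}(\overline N^p_{j,0})^{(r)}$ using the cardinal B-spline inner-product identity \eqref{innerprodcb} together with Lemma~\ref{integcal} to convert integrals over $[0,1]$ into integrals over $\mathbb R$ (legitimate because the overlapping supports, once $n\ge p+\tfrac p2$, stay inside the appropriate range). This should be done in two steps exactly as in Theorems~\ref{closedformdiri} and~\ref{closedformneum}. Step one: for both $i,j$ in the interior range, the reflected terms do not overlap, so $\overline X^{p,r,0}_{i,j}=n^{2r-1}(-1)^r\mathcal N_{2p+1}^{(2r)}(p+1+i-j)=n^{2r-1}\alpha_{|i-j|}$, giving the pure Toeplitz part $T_n^{\bm\alpha}$. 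Step two: for $i\le\lfloor p/2\rfloor$ (and $j\ge i$) the reflected term in $\overline N^p_{i,0}$ contributes, via \eqref{innerprodcb} and the sign $-J_n$, a Hankel-type term $-n^{2r-1}(-1)^r\mathcal N_{2p+1}^{(2r)}(p-1+i+j)=-n^{2r-1}\alpha_{i+j-2}$, i.e. exactly the $(i,j)$ entry of $-n^{2r-1}H_n^{\bm\alpha,1}$ (recall $H_n^{\bm\alpha,1}$ starts with $\alpha_1$, so its $(i,j)$ entry is $\alpha_{i+j-1}$ near the top-left corner — one must track the index shift carefully here, as the reduced-space reflection is offset by one half-integer compared to the Neumann case, which is precisely what turns the $+H^{\bm\alpha,1}$ of Neumann into $-H^{\bm\alpha,1}$). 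Then centrosymmetry of $\overline X^{p,r,0}$ (noted before the theorem) extends the description to the bottom-right block, and the condition $n\ge p+\tfrac p2$ guarantees the top-left and bottom-right reflected contributions do not interfere. This yields \eqref{redfinwhat}, and applying Theorem~\ref{closed2} with $g_p^{\bm\alpha}=g_p^r$ from \eqref{g} and \eqref{alphaparam} then gives the eigenvalues $n^{2r-1}g_p^r(j\pi/n)$ and eigenvectors stated in Corollary~\ref{corollaryeigreduced}.

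The main obstacle I anticipate is the bookkeeping of index shifts in step two: the reduced basis near the boundary reflects with a different offset than the optimal Neumann basis (no central $I_n$ flanked by $J_n$ on both sides, but $\overline L_n=[I_n|-J_n]$ repeated $p/2$ times), so one must verify meticulously that the reflected translates $N^p_{-i+c}$ pair with $N^p_{j-1}$ to produce $\mathcal N_{2p+1}^{(2r)}$ evaluated at exactly $p-1+i+j$ (not $p+i+j$ or $p-2+i+j$), which is what makes the resulting Hankel matrix $H_n^{\bm\alpha,1}$ (entries $\alpha_{i+j-1}$) rather than $H_n^{\bm\alpha,2}$, and with a minus sign. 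A secondary point to check is that the sharp size bound $n\ge p+\tfrac p2$ (rather than the naive $n\ge p+1$) is exactly what is needed for the two boundary corrections to remain disjoint and for no spurious Hankel perturbation to appear in the interior; this is an elementary but careful support-overlap count using $\supp(\mathcal N_p)=[0,p+1]$ from \eqref{suppandregucb}.
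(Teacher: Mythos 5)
Your plan is the paper's proof: expand \eqref{basisrpeven} into the piecewise form $\overline N^p_{i,0}=N^p_{q+i-1}$ in the interior and $\overline N^p_{i,0}=N^p_{q+i-1}-N^p_{q-k_i}$ near the boundary (with $q=-p/2$), observe that this is the Neumann expression \eqref{difexplbasis1} with the reflected term carrying a minus sign, and then rerun the two-step Toeplitz/Hankel computation of Theorem~\ref{closedformneum} using \eqref{innerprodcb}, Lemma~\ref{integcal}, centrosymmetry, and the support count that makes $n\ge p+\tfrac p2$ sufficient. The paper literally reduces to the Neumann proof by flipping the sign of the two cross terms in \eqref{exprelmX1}, whereas you redo the boundary computation directly, but that is the same method.

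One concrete slip in the step you yourself flag as delicate: your claimed Hankel contribution $-(-1)^r\mathcal N_{2p+1}^{(2r)}(p-1+i+j)=-\alpha_{i+j-2}$ is inconsistent with your (correct) statement that the $(i,j)$ entry of $-H_n^{\bm\alpha,1}$ near the top-left corner is $-\alpha_{i+j-1}$. The correct boundary entry, obtained from the Neumann computation with the sign flipped, is
\begin{equation*}
\overline X^{p,r,0}_{i,j}=n^{2r-1}\Bigl((-1)^r\mathcal N_{2p+1}^{(2r)}(p+1+i-j)-(-1)^r\mathcal N_{2p+1}^{(2r)}(p+1+(i+j-1))\Bigr)
=n^{2r-1}\bigl(\alpha_{|i-j|}-\alpha_{i+j-1}\bigr),
\end{equation*}
so the cardinal B-spline in the Hankel term is evaluated at $p+i+j$, not $p-1+i+j$. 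With that correction the index bookkeeping closes and the rest of your argument goes through; a second, harmless, imprecision is that the interior basis function is $N^p_{i-1-p/2}$ rather than $N^p_{i-1}$, which only shifts the integration bounds you need to check and not the resulting Toeplitz entries.
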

\begin{proof}
Using the assumption $n\geq p+1$, the expression \eqref{basisrpeven} simplifies to 
\begin{equation*}
\begin{bmatrix}
\overline{N}^p_{1,0}\\
\overline{N}^p_{2,0}\\
\vdots\\
\overline{N}^p_{n,0}\\
\end{bmatrix} = 
\begin{bmatrix}
\underbrace{\overbrace{\; -J_{n} \;\bigg\vert}^{ \frac{p}{2}}  \; I_{n}  \; \overbrace{   \bigg\vert \; -J_{n}  \;   }^{ \frac{p}{2}}  }_{n+p}
\end{bmatrix}
\begin{bmatrix}
N^p_{-p}\\
\vdots\\
N^p_{0}\\
N^p_{1}\\
\vdots\\
N^p_{n-1}\\
\end{bmatrix},
\end{equation*}
and we obtain
\begin{equation}\label{difexplbasisrd}
\overline{N}^p_{i,0}=\left\{
\begin{array}{ll}
 -N^p_{q-i}+N^p_{q+i-1},& i\in\left\{1,\ldots,-q\right\},\\[0.1cm]
 N^p_{q+i-1},& i\in\left\{-q+1,\ldots,n+q\right\},\\[0.1cm]
N^p_{q+i-1}-N^p_{q+2n-i},& i\in\left\{n+q+1,\ldots,n\right\},
\end{array}
 \right.
\end{equation}
where 
\begin{equation*}
q=\left\lfloor\frac{p}{2}\right\rfloor-p=-\frac{p}{2}.
\end{equation*}

Taking into account that $p$ is even and $\left\lfloor\frac{p}{2}\right\rfloor=\frac{p}{2}=-q$, we observe a strong similarity between \eqref{difexplbasisrd} and \eqref{difexplbasis1} in the proof of Theorem~\ref{sec:closedformneum} (see Section~\ref{sec:A2}).
This similarity implies that we can follow exactly the same proof, with the only difference that for the reduced spline space we need to replace the general form of the basis functions in \eqref{generalformbasis1} with
\begin{equation*}
\overline{N}_{i,0}^p=N^p_{q+i-1}-N^p_{q-k_i}.
\end{equation*}
Therefore, the general form of the elements of $\overline{X}^{p,r,0}$ is obtained by replacing the sign `$+$' in the two terms of \eqref{exprelmX1} that have only $k_i$ or $k_j$ by `$-$'.
Then, we can simply follow the same steps as in the proof of Theorem~\ref{sec:closedformneum}, under the assumption $n\geq p+\frac{p}{2}$, and we arrive at the identity \eqref{redfinwhat}.
\end{proof}

By combining Theorems~\ref{closed2} and~\ref{closedformreduced} we directly deduce a closed-form expression for the eigenvalues and eigenvectors of the matrix $\overline{X}^{p,r,0}$.
\begin{corollary}\label{corollaryeigreduced}
Let $p,n\in\mathbb{N}^{\star}$ and $r\in\mathbb{N}$ such that $p$ even, $n\geq p+\frac{p}{2}$, and $r\leq p$. Then, the eigenpairs of the matrix $\overline{X}^{p,r,0}$ can be expressed as 
\begin{equation*}
\lambda_j\left(\overline{X}^{p,r,0}\right)=n^{2r-1}g_p^{r}\left(\frac{j\pi}{n}\right),\quad j=1,\ldots, n,
\end{equation*}
and
\begin{equation*}
u_{i,j}\left(\overline{X}^{p,r,0}\right)=\sqrt{\frac{2}{n}}\;c_j\sin\left(\frac{j\pi}{n}\left(i-\frac{1}{2}\right)\right),\quad   c_j=\left\{\begin{array}{ll}
    \frac{1}{\sqrt{2}},& j=n, \\
    1,& \text{otherwise},
\end{array}
\right.\quad  i,j=1,\ldots, n,
\end{equation*}
where $g_p^{r}$ is given by \eqref{g_r}. 
\end{corollary}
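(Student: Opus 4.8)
The plan is to obtain the statement as an immediate consequence of the structural identity already established in Theorem~\ref{closedformreduced} together with the closed-form spectral result for $T_n^{\bm{\alpha}}-H_n^{\bm{\alpha},1}$ in Theorem~\ref{closed2}. First I would invoke Theorem~\ref{closedformreduced}: under the hypotheses $p$ even, $n\geq p+\frac{p}{2}$, and $r\leq p$, it gives $\overline{X}^{p,r,0}=n^{2r-1}\bigl(T_n^{\bm{\alpha}}-H_n^{\bm{\alpha},1}\bigr)$ with $\alpha_k=(-1)^r\mathcal{N}_{2p+1}^{(2r)}(p+1-k)$, $k=0,\ldots,p$, as in \eqref{alphaparam} and \eqref{redfinwhat}. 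Since $p$ is even and positive, we have $p\geq 2$, hence $p+\frac{p}{2}=\frac{3p}{2}\geq p+1$, so the size hypothesis $n\geq p+1$ required by Theorem~\ref{closed2} is automatically satisfied.

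Next I would apply Theorem~\ref{closed2} to the matrix $T_n^{\bm{\alpha}}-H_n^{\bm{\alpha},1}$: its eigenvalues are $g_p^{\bm{\alpha}}\bigl(\frac{j\pi}{n}\bigr)$, $j=1,\ldots,n$, with eigenvector components $\sqrt{2/n}\,c_j\sin\bigl(\frac{j\pi}{n}(i-\frac12)\bigr)$, where $c_n=1/\sqrt2$ and $c_j=1$ otherwise. Multiplying a matrix by the positive scalar $n^{2r-1}$ scales every eigenvalue by that factor and leaves the eigenvectors unchanged, so $\lambda_j(\overline{X}^{p,r,0})=n^{2r-1}g_p^{\bm{\alpha}}\bigl(\frac{j\pi}{n}\bigr)$ and $u_{i,j}(\overline{X}^{p,r,0})$ has the claimed form. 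It then remains only to identify $g_p^{\bm{\alpha}}$ with $g_p^{r}$: substituting $\alpha_0=(-1)^r\mathcal{N}_{2p+1}^{(2r)}(p+1)$ and $\alpha_k=(-1)^r\mathcal{N}_{2p+1}^{(2r)}(p+1-k)$ into $g_p^{\bm{\alpha}}(\theta)=\alpha_0+2\sum_{k=1}^p\alpha_k\cos(k\theta)$ from \eqref{g} reproduces precisely the expression \eqref{g_r} for $g_p^{r}$.

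There is essentially no genuine obstacle here: the entire analytical effort — establishing the Toeplitz-minus-Hankel decomposition \eqref{redfinwhat} under the sharp size condition $n\geq p+\frac{p}{2}$ — has already been carried out in Theorem~\ref{closedformreduced}, and the spectral computation is imported verbatim from Theorem~\ref{closed2}. The only points requiring (routine) care are checking the compatibility of the two size hypotheses, as done above, observing that the scaling factor $n^{2r-1}$ is strictly positive so the eigenpair correspondence is preserved, and performing the direct substitution that matches $g_p^{\bm{\alpha}}$ to $g_p^{r}$. Hence the corollary follows.
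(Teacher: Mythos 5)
Your proposal is correct and matches the paper's own derivation: the corollary is obtained there exactly by combining Theorem~\ref{closedformreduced} with Theorem~\ref{closed2} and identifying $g_p^{\bm{\alpha}}$ with $g_p^{r}$ via \eqref{alphaparam} and \eqref{g}. Your additional checks (that $p$ even and positive gives $n\geq \frac{3p}{2}\geq p+1$, and that the positive scalar $n^{2r-1}$ preserves eigenvectors) are routine but sound.
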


A simplified version of Corollary~\ref{corollaryeigreduced} (considering $r=0,1$) reveals the expression for eigenvalues and eigenvectors of the discretized Laplace eigenvalue problem \eqref{pd1} with Dirichlet boundary conditions \eqref{Dbc}.
\begin{corollary}\label{coroloryclosedformreduced}
Let $p,n\in\mathbb{N}^{\star}$ such that $p$ even and $n\geq p+\frac{p}{2}$. Then, the eigenpairs of the matrix
$\overline{L}_n^{p,0}=\left(\overline{M}_n^{p,0}\right)^{-1}\overline{K}_n^{p,0}$ can be expressed as
\begin{equation*}
\lambda_j\left(\overline{L}_n^{p,0}\right)=n^{2}\frac{g_p^{1}\left(\frac{j\pi}{n}\right)}{g_p^{0}\left(\frac{j\pi}{n}\right)},\quad j=1,\ldots, n,
\end{equation*}
and
\begin{equation*}
u_{i,j}\left(\overline{L}_n^{p,0}\right)=\sqrt{\frac{2}{n}}\;c_j\sin\left(\frac{j\pi}{n}\left(i-\frac{1}{2}\right)\right),\quad  c_j=\left\{\begin{array}{ll}
    \frac{1}{\sqrt{2}},& j=n, \\
    1,& \text{otherwise},
\end{array}
\right.\quad  i,j=1,\ldots, n,
\end{equation*}
where $g_p^{r}$ is given by \eqref{g_r} for $r=0,1$.
\end{corollary}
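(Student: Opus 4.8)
The plan is to obtain the statement as a direct corollary of Corollary~\ref{corollaryeigreduced}, in exactly the way Corollary~\ref{coroloryclosedform0} was derived from Corollary~\ref{corollaryeigdiri}. First I would instantiate Corollary~\ref{corollaryeigreduced} at $r=0$ and at $r=1$, recalling from Definition~\ref{difmatrixreduced} that $\overline{M}_n^{p,0}=\overline{X}^{p,0,0}$ and $\overline{K}_n^{p,0}=\overline{X}^{p,1,0}$. The crucial observation is that the eigenvector formula furnished by Corollary~\ref{corollaryeigreduced} does not depend on $r$: writing $U=[u_{i,j}]_{i,j=1}^n$ with $u_{i,j}=\sqrt{2/n}\,c_j\sin\!\big(\tfrac{j\pi}{n}(i-\tfrac12)\big)$, the \emph{same} matrix $U$ simultaneously diagonalizes $\overline{M}_n^{p,0}$ and $\overline{K}_n^{p,0}$. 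Moreover $U$ is real orthogonal, $U^\top U=I_n$; this is the content of Theorem~\ref{closed2}, and the weights $c_j$ (with $c_n=1/\sqrt2$) are chosen precisely to normalize the discrete sine system.

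Next I would write, using Corollary~\ref{corollaryeigreduced},
\[
\overline{M}_n^{p,0}=U\,\diag_{j}\!\big(n^{-1}g_p^0(\tfrac{j\pi}{n})\big)\,U^\top,\qquad
\overline{K}_n^{p,0}=U\,\diag_{j}\!\big(n\,g_p^1(\tfrac{j\pi}{n})\big)\,U^\top .
\]
To invert the mass matrix I would invoke the bound $\big(\tfrac{4}{\pi^2}\big)^{p+1}\le g_p^0(\theta)\le 1$ for $\theta\in[0,\pi]$ established in \cite{garoni2014spectrum} (and already used in the proof of Corollary~\ref{coroloryclosedform0}); this guarantees $g_p^0(\tfrac{j\pi}{n})>0$ for every $j$, hence $\overline{M}_n^{p,0}$ is nonsingular and $(\overline{M}_n^{p,0})^{-1}=U\,\diag_{j}\!\big(n\,g_p^0(\tfrac{j\pi}{n})^{-1}\big)\,U^\top$. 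Multiplying the two factorizations and using $U^\top U=I_n$ then yields
\[
\overline{L}_n^{p,0}=(\overline{M}_n^{p,0})^{-1}\overline{K}_n^{p,0}
=U\,\diag_{j}\!\Big(n^2\,\tfrac{g_p^1(j\pi/n)}{g_p^0(j\pi/n)}\Big)\,U^\top ,
\]
from which the claimed eigenvalues are read off the diagonal and the claimed eigenvectors are the columns of $U$.

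There is essentially no obstacle here beyond bookkeeping: all the substantive work, namely the Toeplitz-minus-Hankel identity \eqref{redfinwhat} and the resulting simultaneous spectral decomposition, is already carried out in Theorem~\ref{closedformreduced} and Corollary~\ref{corollaryeigreduced}. The only two points deserving a word of justification are the $r$-independence of the eigenvector matrix $U$ (so that the generalized eigenvalues are genuine pointwise ratios of the mass and stiffness eigenvalues) and the strict positivity of $g_p^0$ on $[0,\pi]$ (so that the division is legitimate); both follow immediately from results recalled in the excerpt. It is also worth noting in passing that, because $U$ is orthogonal, $\overline{L}_n^{p,0}$ is itself symmetric, consistent with the structure of the problem.
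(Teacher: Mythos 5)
Your proposal is correct and follows essentially the same route as the paper: the paper obtains this corollary directly from Corollary~\ref{corollaryeigreduced} with $r=0,1$, using the common ($r$-independent) orthogonal eigenvector matrix and the positivity bound $\left(\frac{4}{\pi^2}\right)^{p+1}\leq g_p^0(\theta)\leq 1$ to justify inverting the mass matrix, exactly as in the proof of Corollary~\ref{coroloryclosedform0}. Nothing is missing.
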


\begin{remark}\label{rmk:outlierfreereduced}
The explicit closed-form expressions, described in Corollary~\ref{coroloryclosedformreduced}, for the approximated spectrum of the Laplace operator with Dirichlet boundary conditions show that also the reduced spline spaces $\overline{\mathbb{S}}_{p,n,0}$ ($p$ even) lead to outlier-free discretizations.
This follows from \eqref{exactDbc}, using the same line of arguments as in Remark~\ref{rmk:outlierfree0}. Indeed, by setting $\theta_j= \frac{j\pi}{n}$, $j=1, \ldots, n$, we get
\begin{equation*}
\dfrac{\lambda_j\left(\overline L_n^{p,0}\right)-(j\pi)^2}{(j\pi)^2}=
\dfrac{n^2}{(j\pi)^2}(e_p(\theta_j)-(\theta_j)^2)=\dfrac{e_p(\theta_j)-(\theta_j)^2}{(\theta_j)^2}.
\end{equation*}
Thus, for all $j=1,\ldots,n$, from \eqref{error-bound} we deduce that the relative error between $\lambda_j\left(\overline L_n^{p,0}\right)$ and the $j$-th eigenvalue of the Laplace operator with Dirichlet boundary conditions converges to zero as $p$ increases, resulting in an outlier-free approximation.
\end{remark}

\begin{remark}
From Corollaries~\ref{coroloryclosedformreduced} and~\ref{coroloryclosedform0} we see that, when considering the spaces $\overline{\mathbb{S}}_{p,n+1,0}$ and $\mathbb{S}_{p,n,0}^{\opt}$ with $p$ even, they both yield $n$ identical eigenvalues for the discretized Laplace operator -- note that the former space has an additional $(n+1)$-st eigenvalue. However, the corresponding eigenvectors differ consistently.
For the reduced spline space, there is a shift of $-1/2$ compared to the optimal spline subspace. This shift of $-1/2$ is not introduced in the construction of the basis of the reduced spline space \eqref{basisrpeven} but is present in the basis of the optimal spline subspace \eqref{bsplines0even}.
\end{remark}

\subsection{Multidimensional extension}\label{sec:multidimclosedform}
Now, we expand the closed-form expressions for eigenvalues and eigenvectors from the $1$-dimensional to the $d$-dimensional context. We start by introducing the $d$-dimensional eigenvalue problem and outline the approximate variational formulation.
We focus on the tensor product of optimal spline subspaces suited for the Dirichlet boundary conditions \eqref{Dbc}. Similar results can be obtained for the other previously mentioned boundary conditions and spaces, including the reduced spline spaces, and we refer the reader to the general setting presented in \cite{ekstrom2018eigenvalues}.

Let $d\in\mathbb{N}^{\star}$, we consider the following $d-$dimensional Laplace eigenvalue problem:
\begin{equation}\label{pd2}
\left\{
\begin{array}{ll}
-\Delta  u=\lambda u,\quad (0,1)^d,\\[0.1cm]
u=0,\quad \partial\left( (0,1)^d \right).
\end{array}
\right.
\end{equation}
The non-trivial exact solutions $(\lambda_{\bm{k}},u_{\bm{k}})$ of \eqref{pd2} are given by
\begin{equation*}
\lambda_{\bm{k}}=\displaystyle\sum_{s=1}^{d}\left(k_s\pi\right)^2,\quad u_{\bm{k}}(\bm{x})=\displaystyle\prod_{s=1}^d \sin\left( k_s \pi x_s\right),
\end{equation*}
for $\bm{k}=(k_1,\ldots,k_d)\in(\mathbb{N}^{\star})^d$ and $\bm{x}=(x_1,\ldots,x_d)\in [0,1]^d$.

Given $\bm{n}=(n_1,\ldots,n_d)\in(\mathbb{N}^{\star})^d$ and $\bm{p}=(p_1,\ldots,p_d)\in(\mathbb{N}^{\star})^d$, let us consider the tensor-product space $\bigotimes_{s=1}^{d} \mathbb{S}_{p_s,n_s,0}^{\opt}$
in the context of the Galerkin method to discretize \eqref{pd2}. 
From Section~\ref{sec:DBCBasis}, it is clear that
\begin{equation*}
N_{\bm{i},0}^{\bm{p}}=N_{i_1,0}^{p_1}\otimes\dots\otimes N_{i_d,0}^{p_d},\quad \bm{i}=\bm{1},\ldots,\bm{n},
\end{equation*}
forms a basis for this space, where $\bm{i}=(i_1,\ldots,i_d)\in(\mathbb{N}^{\star})^d$ is a multi-index varying between $\bm{1}=(1,\ldots,1)$ and $\bm{n}=(n_1,\ldots,n_d)$.
The approximate solutions of \eqref{pd2} can be computed as follows: 
\begin{equation}\label{multvar}
\text{Find}\quad (\lambda_{\bm{k},\bm{h}},u_{\bm{k},\bm{h}})\in\mathbb{R}\times \bigotimes_{s=1}^{d} \mathbb{S}_{p_s,n_s,0}^{\opt}
\quad \text{such that}\quad   
K( u_{\bm{k},\bm{h}},v)=\lambda_{\bm{k},\bm{h}}M(u_{\bm{k},\bm{h}},v),\quad \forall v\in  \bigotimes_{s=1}^{d} \mathbb{S}_{p_s,n_s,0}^{\opt},
\end{equation}
where
\begin{equation*}
M( u_{\bm{k},\bm{h}},v)=\displaystyle\int_{(0,1)^d} u_{\bm{k},\bm{h}} (\bm{x})  v (\bm{x})\;{\dd}\bm{x},\quad
K( u_{\bm{k},\bm{h}},v)=\displaystyle\int_{(0,1)^d}\nabla u_{\bm{k},\bm{h}} (\bm{x}) \cdot \nabla v (\bm{x})\;{\dd}\bm{x}.
\end{equation*}
The problem described by \eqref{multvar} can be reformulated as a discrete eigenvalue problem,
\begin{equation}\label{ddimsystemdis}
\left( M^{\bm{p},0}_{\bm{n}} \right)^{-1}K^{\bm{p},0}_{\bm{n}}\; \bm{u}_{\bm{k},\bm{h}}=\lambda_{\bm{k},\bm{h}} \bm{u}_{\bm{k},\bm{h}}.
\end{equation}
The matrices $M^{\bm{p},0}_{\bm{n}}$ and $K^{\bm{p},0}_{\bm{n}}$ are called the mass matrix and the stiffness matrix, respectively, and are defined by
\begin{equation*}
M^{\bm{p},0}_{\bm{n}}=\left[  \displaystyle\int_{(0,1)^d}  N_{\bm{i},0}^{\bm{p}}(\bm{x}) N_{\bm{j},0}^{\bm{p}}(\bm{x}) \;{\dd}\bm{x}\right]_{\bm{i},\bm{j}=\bm{1}}^{\bm{n}},\quad
K^{\bm{p},0}_{\bm{n}}=\left[  \displaystyle\int_{(0,1)^d} \nabla N_{\bm{i},0}^{\bm{p}}(\bm{x}) \cdot \nabla N_{\bm{j},0}^{\bm{p}}(\bm{x}) \;{\dd}\bm{x}\right]_{\bm{i},\bm{j}=\bm{1}}^{\bm{n}}.
\end{equation*}
Note that
\begin{equation}\label{generalmatricesdmim-1}
M^{\bm{p},0}_{\bm{n}}=\bigotimes_{s=1}^d M_{n_s}^{p_s,0},\quad
K^{\bm{p},0}_{\bm{n}}=\displaystyle\sum_{r=1}^d\left( \bigotimes_{s=1}^{r-1}M_{n_s}^{p_s,0}\right)\otimes K_{n_r}^{p_r,0}\otimes \left( \bigotimes_{s=r+1}^d M_{n_s}^{p_s,0}  \right),
\end{equation}
and
\begin{equation}\label{generalmatricesdmim-2}
L^{\bm{p},0}_{\bm{n}}=\left( M^{\bm{p},0}_{\bm{n}} \right)^{-1}K^{\bm{p},0}_{\bm{n}} =\displaystyle\sum_{r=1}^d\left( \bigotimes_{s=1}^{r-1}I_{n_s}\right)\otimes \left( \left( M_{n_r}^{p_r,0} \right)^{-1} K_{n_r}^{p_r,0}\right)\otimes \left( \bigotimes_{s=r+1}^d I_{n_s}  \right).
\end{equation}
The matrices $K_{n_r}^{p_r,0}$ and $M_{n_r}^{p_r,0}$ for a fixed $r\in\{1,\ldots,d\}$ are precisely those defined in \eqref{allmatrices} and studied in detail in Section~\ref{sec:closedformdiri}. For more details on the above constructions and arguments of this section, we refer the reader to \cite[Section~6.1]{ekstrom2018eigenvalues}.

In the next theorem we provide a closed-form expression for the eigenvalues and eigenvectors of the system \eqref{ddimsystemdis}, which is a direct consequence of Corollary~\ref{corollaryeigdiri}.
\begin{theorem}\label{extendclosedform}
Let $d\in\mathbb{N}^{\star}$ and $\bm{p},\bm{n}\in\left(\mathbb{N}^{\star}\right)^d$ such that $n_s\geq \max\left\{p_s+1,p_s+\left\lfloor\frac{p_s}{2}\right\rfloor-1\right\}$ for all $s\in\{1,\ldots, d\}$. Then, the eigenpairs of the matrices $M^{\bm{p},0}_{\bm{n}}$, $L^{\bm{p},0}_{\bm{n}}$, and $K^{\bm{p},0}_{\bm{n}}$ can be expressed as
\begin{align*}    
\lambda_{\bm{j}}\left(M^{\bm{p},0}_{\bm{n}}\right)&=\prod _{s=1}^d (n_s+1)^{-1}\left[ g^0_{p_s}\left(\frac{j_s\pi}{n_s+1}\right)\right],\\
\lambda_{\bm{j}}\left(L^{\bm{p},0}_{\bm{n}}\right)&= \displaystyle\sum_{r=1}^d (n_r+1)^2 \left[ \frac{g^1_{p_r}}{g^0_{p_r}}\left(\frac{j_r\pi}{n_r+1}\right)\right],\\
\lambda_{\bm{j}}\left(K^{\bm{p},0}_{\bm{n}}\right)&=\lambda_{\bm{j}}\left(M^{\bm{p},0}_{\bm{n}}\right) \lambda_{\bm{j}}\left(L^{\bm{p},0}_{\bm{n}}\right),
\end{align*}
and 
\begin{equation*}
\bm{u}_{\bm{j}}\left(M^{\bm{p},0}_{\bm{n}}\right)=\bm{u}_{\bm{j}}\left(L^{\bm{p},0}_{\bm{n}}\right)=\bm{u}_{\bm{j}}\left(K^{\bm{p},0}_{\bm{n}}\right)=\bigotimes_{s=1}^d\left[\sqrt{\frac{2}{n_s+1}}\sin\left(\frac{ij_s\pi}{n_s+1}\right)\right]_{i=1}^{n_s},
\end{equation*}
for $\bm{j}=\bm{1},\ldots,\bm{n}$. The function $g_p^{r}$ with $r=0,1$ is defined in \eqref{g_r}.
\end{theorem}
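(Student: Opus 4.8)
The plan is to reduce everything to the univariate result and pure tensor-product linear algebra. The crucial observation, already embedded in Corollary~\ref{corollaryeigdiri}, is that for each fixed direction $s$ the eigenvector formula is \emph{independent of $r$}: taking $r=0$ and $r=1$ shows that $M_{n_s}^{p_s,0}=X^{p_s,0,0}$ and $K_{n_s}^{p_s,0}=X^{p_s,1,0}$ are simultaneously diagonalized by the single orthogonal (and symmetric) sine-transform matrix $Q_{n_s}=\big[\sqrt{\tfrac{2}{n_s+1}}\sin(\tfrac{ij\pi}{n_s+1})\big]_{i,j=1}^{n_s}$, with $Q_{n_s}^{\top}M_{n_s}^{p_s,0}Q_{n_s}=\mathrm{diag}_j\big((n_s+1)^{-1}g^0_{p_s}(\tfrac{j\pi}{n_s+1})\big)$ and $Q_{n_s}^{\top}K_{n_s}^{p_s,0}Q_{n_s}=\mathrm{diag}_j\big((n_s+1)\,g^1_{p_s}(\tfrac{j\pi}{n_s+1})\big)$. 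The hypothesis $n_s\geq\max\{p_s+1,p_s+\lfloor p_s/2\rfloor-1\}$ for every $s$ is precisely what is needed to invoke Corollary~\ref{corollaryeigdiri} in each direction. Since $g^0_{p_s}>0$ (by the bound recalled in Corollary~\ref{coroloryclosedform0}), the mass factor $M_{n_s}^{p_s,0}$ is positive definite, so $L_{n_s}^{p_s,0}=(M_{n_s}^{p_s,0})^{-1}K_{n_s}^{p_s,0}$ is well defined and is diagonalized by the same $Q_{n_s}$ with eigenvalues the quotients $(n_s+1)^2\tfrac{g^1_{p_s}}{g^0_{p_s}}(\tfrac{j\pi}{n_s+1})$.

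Next I would set $Q=\bigotimes_{s=1}^d Q_{n_s}$, which is orthogonal of size $\prod_s n_s$ because a Kronecker product of orthogonal matrices is orthogonal, and whose columns, indexed by the multi-index $\bm j=(j_1,\dots,j_d)$, are exactly $\bigotimes_{s=1}^d\big[\sqrt{\tfrac{2}{n_s+1}}\sin(\tfrac{ij_s\pi}{n_s+1})\big]_{i=1}^{n_s}$. Applying the mixed-product rule $(\bigotimes_s A_s)(\bigotimes_s B_s)=\bigotimes_s(A_sB_s)$ to the Kronecker representation of $M^{\bm p,0}_{\bm n}$ in \eqref{generalmatricesdmim-1} gives $Q^{\top}M^{\bm p,0}_{\bm n}Q=\bigotimes_{s=1}^d\big(Q_{n_s}^{\top}M_{n_s}^{p_s,0}Q_{n_s}\big)$, a diagonal matrix whose $\bm j$-th entry is $\prod_{s=1}^d(n_s+1)^{-1}g^0_{p_s}(\tfrac{j_s\pi}{n_s+1})$; this is the stated $\lambda_{\bm j}(M^{\bm p,0}_{\bm n})$. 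For $L^{\bm p,0}_{\bm n}$ I would apply the congruence $Q^{\top}(\cdot)Q$ to each summand of \eqref{generalmatricesdmim-2}: the $r$-th summand becomes $\big(\bigotimes_{s<r}I_{n_s}\big)\otimes\big(Q_{n_r}^{\top}L_{n_r}^{p_r,0}Q_{n_r}\big)\otimes\big(\bigotimes_{s>r}I_{n_s}\big)$, which is diagonal with $\bm j$-th entry $(n_r+1)^2\tfrac{g^1_{p_r}}{g^0_{p_r}}(\tfrac{j_r\pi}{n_r+1})$; summing over $r$ yields the claimed $\lambda_{\bm j}(L^{\bm p,0}_{\bm n})$ and shows $L^{\bm p,0}_{\bm n}$ is diagonalized by $Q$. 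Finally, since $K^{\bm p,0}_{\bm n}=M^{\bm p,0}_{\bm n}L^{\bm p,0}_{\bm n}$ and both factors are diagonalized by the same orthogonal $Q$, so is their product, with eigenvalue $\lambda_{\bm j}(M^{\bm p,0}_{\bm n})\lambda_{\bm j}(L^{\bm p,0}_{\bm n})$; and all three matrices share the orthonormal eigenbasis given by the columns of $Q$, which is the common eigenvector formula.

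There is no genuinely difficult step here: the whole argument is standard tensor-product spectral theory. The points that require care, rather than depth, are (i) extracting from Corollary~\ref{corollaryeigdiri} the fact that the \emph{same} transform $Q_{n_s}$ diagonalizes both $M_{n_s}^{p_s,0}$ and $K_{n_s}^{p_s,0}$, which is the real engine of the proof; (ii) keeping the bookkeeping between the Kronecker ordering of the tensor-product basis functions $N^{\bm p}_{\bm i,0}$ and the multi-index $\bm j$ consistent throughout; and (iii) verifying orthogonality of $Q$ so that the listed vectors form a bona fide orthonormal eigenbasis. As an alternative for the stiffness matrix one could diagonalize $K^{\bm p,0}_{\bm n}$ directly, summand-by-summand, from \eqref{generalmatricesdmim-1}, using $\lambda_{j_r}(K_{n_r}^{p_r,0})=\lambda_{j_r}(M_{n_r}^{p_r,0})\lambda_{j_r}(L_{n_r}^{p_r,0})$ and factoring out $\prod_{s=1}^d\lambda_{j_s}(M_{n_s}^{p_s,0})$, which reproduces the identity $\lambda_{\bm j}(K^{\bm p,0}_{\bm n})=\lambda_{\bm j}(M^{\bm p,0}_{\bm n})\lambda_{\bm j}(L^{\bm p,0}_{\bm n})$.
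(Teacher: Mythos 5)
Your proposal is correct and follows essentially the same route as the paper: univariate simultaneous diagonalization of $M_{n_s}^{p_s,0}$ and $K_{n_s}^{p_s,0}$ by the symmetric orthogonal sine matrix $Q_{n_s}$ (from Corollary~\ref{corollaryeigdiri}), the mixed-product rule applied to the Kronecker representations \eqref{generalmatricesdmim-1}--\eqref{generalmatricesdmim-2}, and the factorization $K^{\bm{p},0}_{\bm{n}}=M^{\bm{p},0}_{\bm{n}}L^{\bm{p},0}_{\bm{n}}$. No gaps.
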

\begin{proof}
Based on Corollary~\ref{corollaryeigdiri}, for any fixed $s\in\{1,\ldots, d\}$, we have
\begin{equation*}
\begin{aligned}
M^{p_s,0}_{n_s}&=Q_{n_s}\left( (n_s+1)^{-1} \diag_{j=1,\ldots,n_s}\left[ g^0_{p_s}\left(\frac{j\pi}{n_s+1}\right)\right]\right)Q_{n_s},\\
K^{p_s,0}_{n_s}&=Q_{n_s}\left( (n_s+1) \diag_{j=1,\ldots,n_s}\left[ g^1_{p_s}\left(\frac{j\pi}{n_s+1}\right)\right]\right)Q_{n_s},
\end{aligned}
\end{equation*}
where $Q_{n_s}$ is  a symmetric and orthogonal matrix containing the eigenvectors, given by 
\begin{equation*}
Q_{n_s}=\sqrt{\frac{2}{n_s+1}}\left[\sin\left(\frac{ij\pi}{n_s+1}\right)\right]_{i,j=1}^{n_s}.
\end{equation*}
Then, from \eqref{generalmatricesdmim-1} and \eqref{generalmatricesdmim-2}, we deduce
\begin{equation*} 
M^{\bm{p},0}_{\bm{n}}=\bigotimes_{s=1}^d M_{n_s}^{p_s,0}
=\left( \bigotimes_{s=1}^d Q_{n_s} \right) \left( \bigotimes_{s=1}^d \left( (n_s+1)^{-1}\diag_{j=1,\ldots,n_s}\left[ g^0_{p_s}\left(\frac{j\pi}{n_s+1}\right)\right]\right)\right)\left( \bigotimes_{s=1}^d Q_{n_s} \right),
\end{equation*}
and
\begin{equation*} 
\begin{aligned}
L^{\bm{p},0}_{\bm{n}} &=\displaystyle\sum_{r=1}^d\left( \bigotimes_{s=1}^{r-1}I_{n_s}\right)\otimes \left( \left( M_{n_r}^{p_r,0} \right)^{-1} K_{n_r}^{p_r,0}\right)\otimes \left( \bigotimes_{s=r+1}^d I_{n_s} \right)\\
&=\displaystyle\sum_{r=1}^d\left( \bigotimes_{s=1}^{r-1}I_{n_s}\right)\otimes \left\{ Q_{n_r} \left( (n_r+1)^2\diag_{j=1,\ldots,n_r}\left[ \frac{g^1_{p_r}}{g^0_{p_r}}\left(\frac{j\pi}{n_r+1}\right)\right]\right) Q_{n_r}\right\}\otimes \left( \bigotimes_{s=r+1}^d I_{n_s} \right)\\
&= \left( \bigotimes_{s=1}^d Q_{n_s} \right) \left\{\displaystyle\sum_{r=1}^d \left( \bigotimes_{s=1}^{r-1} I_{n_s}\right) \otimes \left((n_r+1)^2 \diag_{j=1,\ldots,n_r}\left[ \frac{g^1_{p_r}}{g^0_{p_r}}\left(\frac{j\pi}{n_r+1}\right)\right]\right) \otimes \left( \bigotimes_{s=r+1}^{d} I_{n_s}\right)\right\} \left( \bigotimes_{s=1}^d Q_{n_s} \right).
\end{aligned}
\end{equation*}
Finally, we have $K^{\bm{p},0}_{\bm{n}}=M^{\bm{p},0}_{\bm{n}}L^{\bm{p},0}_{\bm{n}}$, 
which completes the proof.  
\end{proof}

\section{Conclusions, further comments, and future work}\label{sec:conclusions}
We have provided explicit closed-form expressions for the eigenvalues and eigenvectors of the mass and stiffness matrices resulting from isogeometric Galerkin discretizations in optimal spline subspaces and specific reduced spline spaces that are outlier-free for the Laplace operator, under different types of boundary conditions. Our methodology is fully algebraic: it is based on proving that the mass and stiffness matrices possess a Toeplitz-minus-Hankel or Toeplitz-plus-Hankel structure, for any degree $p$. This specific perturbation of Toeplitz matrices results in the elimination of outliers and allows us to deduce a closed form for eigenvalues and eigenvectors. 
In all the studied cases, the eigenvalues of the mass and stiffness matrices are an explicitly known (uniform) sampling of the spectral symbol of the associated Toeplitz matrices.

The results in Section~\ref{sec:closedformoptspaces} are not confined to the mass and stiffness matrices related to isogeometric Galerkin discretizations of zeroth and second order problems in the considered spline subspaces (the cases $r=0,1$). They also hold for higher values of $r$, i.e., for isogeometric Galerkin discretizations of polyharmonic problems of order $r$ (with $p\geq 2r-1$) under the so-called {Laplace boundary conditions}; see \cite[Section~2.1]{manni2023}. For these problems, the spaces in \eqref{Sopti} turn out to be optimal as well; see \cite[Section~3.1]{manni2023}. Such boundary conditions, however, are of not much practical interest; they result in solutions that coincide with those for $r=1$. Outlier-free discretizations for polyharmonic problems with more interesting boundary conditions have been proposed in \cite{manni2023} and an algebraic investigation of the spectral properties of the resulting matrices is a captivating topic for future research.

A natural question is to what extent the proposed outlier-free discretizations can be fruitfully used for addressing general problems with non-homogeneous boundary behavior. It is clear that a plain discretization in outlier-free spline subspaces generally leads to a substantial loss of approximation power compared to the corresponding full spline space  because of the additional homogeneous boundary conditions that characterize the subspaces of interest. However, for problems identified by sufficiently smooth data, a suitable data-correction process for the missing boundary derivatives has been proposed in \cite[Section~5]{manni2022application}, which is analogous to the classical reduction from non-homogeneous to homogeneous Dirichlet boundary conditions. When coupled with this boundary data correction, the discretization in outlier-free spline subspaces achieves full approximation order both in the univariate and in the multivariate tensor-product case. 
It is noteworthy that the presented spectral results are highly valuable for the development of powerful solvers in that context. For more details and insights, we direct readers to \cite[Remark~1]{ekstrom2018eigenvalues} and references therein.

Lastly, an important aspect is that the machinery used here for the outlier-free theory could extend to the case of coercive second order operators with variable coefficients. The key will be the nature of the linear positive operator (see \cite{LPO}, \cite[Corollary~6.2, Section~6.3]{garoni2017generalized}, \cite[Corollary~3.2, Section~3.3]{garoni2018generalized} and references therein) of the underlying matrices and we are convinced that designing outlier-free approximating matrices in a more general setting is worth to be investigated in future research.

\section*{Acknowledgements}
C.~Manni, S.~Serra-Capizzano, and H.~Speleers are members of the research group GNCS (Gruppo Nazionale per il Calcolo Scientifico) of INdAM (Istituto Nazionale di Alta Matematica).
C. Manni and H. Speleers acknowledge the MUR Excellence Department Project MatMod@TOV (CUP E83C23000330006) awarded to the Department of Mathematics of the University of Rome Tor Vergata. They are also partially supported by a Project of Relevant National Interest (PRIN) under the National Recovery and Resilience Plan (PNRR) funded by the European Union -- Next Generation EU (CUP E53D23017910001), by the Italian Research Center in High Performance Computing, Big Data and Quantum Computing (CUP E83C22003230001), and by a GNCS project (CUP E53C23001670001).
The work of S. Serra-Capizzano is partially supported by a PRIN-PNRR project (CUP J53D23003780006) and by a GNCS project (CUP E53C23001670001). He is also funded via the European High-Performance Computing Joint Undertaking (JU) under grant agreement No 955701. The JU receives support from the European Union’s Horizon 2020 research and innovation programme and Belgium, France, Germany, Switzerland. Furthermore, he is grateful for the support of the Laboratory of Theory, Economics and Systems -- Department of Computer Science at Athens University of Economics and Business and to the ``Como Lake Center for AstroPhysics'' of the University of Insubria.

\appendix
\section{Proofs of main theorems}\label{sec:A}
To simplify the notation, we define the parameters
\begin{equation*}
q=\left\lfloor\frac{p}{2}\right\rfloor-p, \quad
p_0=p-2\left\lfloor \frac{p}{2}\right\rfloor,
\end{equation*}
which help in unifying the different basis constructions for even and odd degree $p$.
Moreover, in the case $p=1$, any matrix block with $\left\lfloor \frac{p}{2}\right\rfloor$ rows or columns is considered to be non-existent.

\subsection{Proof of Theorem~\ref{closedformdiri}}\label{sec:A1}
Fix $p,n\in\mathbb{N}^{\star}$ and $r\in\mathbb{N}$ such that $n\geq\max\left\{p+1,p+\left\lfloor\frac{p}{2}\right\rfloor-1\right\}$ and $r\leq p$.
From the construction of the basis in \eqref{basis0podd} and \eqref{basis0peven}, and using the assumption $n\geq p+1$, we deduce 
\begin{equation*}
\begin{bmatrix}
N^p_{1,0}\\
N^p_{2,0}\\
\vdots\\
N^p_{n,0}\\
\end{bmatrix} = 
\begin{bmatrix}
\underbrace{\overbrace{\;\; L^0_{n} \;\;\bigg\vert}^{\lfloor \frac{p}{2}\rfloor+1}  \;\; I_{n}  \;\; \overbrace{   \bigg\vert \;\; R^0_{n}  \;\;   }^{\lfloor \frac{p}{2}\rfloor+1}  }_{n+p+2-p_0}
\end{bmatrix}
\begin{bmatrix}
N^p_{-p}\\
\vdots\\
N^p_{0}\\
N^p_{1}\\
\vdots\\
N^p_{n+1-p_0}\\
\end{bmatrix},
\end{equation*}
where
\begin{equation*}
L^0_{n}=\begin{bmatrix}
    -J_{\left\lfloor \frac{p}{2}\right\rfloor}& O_{\left(\left\lfloor \frac{p}{2}\right\rfloor,1\right)}\\
    O_{\left(n-\left\lfloor \frac{p}{2}\right\rfloor,\left\lfloor \frac{p}{2}\right\rfloor\right)} &  O_{\left(n-\left\lfloor \frac{p}{2}\right\rfloor,1\right)}
\end{bmatrix},\quad
R^0_{n}=\begin{bmatrix}
    O_{\left(n-\left\lfloor \frac{p}{2}\right\rfloor,1\right)}  & O_{\left(n-\left\lfloor \frac{p}{2}\right\rfloor,\left\lfloor \frac{p}{2}\right\rfloor\right)}\\
    O_{\left(\left\lfloor \frac{p}{2}\right\rfloor,1\right)} &  -J_{\left\lfloor \frac{p}{2}\right\rfloor}
\end{bmatrix}.
\end{equation*}
We recall that $I_k$ is the identity matrix, $J_k$ the exchange matrix, and $O_{(k,l)}$ the zero matrix of the indicated size. 
Through a straightforward computation, we arrive at
\begin{equation}\label{difexplbasis0}
N^p_{i,0}=\left\{
\begin{array}{ll}
-N^p_{q-i}+N^p_{q+i},& i\in\left\{1,\ldots,\left\lfloor\frac{p}{2}\right\rfloor\right\},\\[0.2cm]
N^p_{q+i},& i\in\left\{\left\lfloor\frac{p}{2}\right\rfloor+1,\ldots,n-\left\lfloor\frac{p}{2}\right\rfloor\right\},\\[0.2cm]
N^p_{q+i}-N^p_{q+2(n+1)-i},& i\in\left\{n+1-\left\lfloor\frac{p}{2}\right\rfloor,\ldots,n\right\}.
\end{array}
 \right.
\end{equation}
When we assume their restriction to the interval $[0,1]$, the basis functions in \eqref{difexplbasis0} can be written in the following general form:
\begin{equation}\label{generalformbasis0}
N^p_{i,0}=N^p_{q+i}-N^p_{q-k_i},\quad i=1,\ldots,n,
\end{equation}
where $k_i$ represents a certain index depending on the value of $i\in\{1,\ldots,n\}$. Here we exploit the fact that $N^p_{l}$ has no support on $[0,1]$ for very large positive or negative values of index $l$; see \eqref{suppandregucb}. 

Now, we proceed to compute the elements of the matrix $X^{p,r,0}$; see Definition~\ref{difmatrixX} with $b=0$. For fixed $i,j\in\{1, \ldots, n\}$, we have 
\begin{equation*}
\begin{aligned}
X^{p,r,0}_{i,j}&=\displaystyle\int_{0}^{1} (N_{i,0}^p)^{(r)}(x)(N_{j,0}^p)^{(r)}(x)\;{\dd}x\\
&=\displaystyle\int_{0}^{1} \left[(N_{q+i}^p)^{(r)}(x)-(N_{q-k_i}^p)^{(r)}(x)\right]\left[(N_{q+j}^p)^{(r)}(x)-(N_{q-k_j}^p)^{(r)}(x)\right]\;{\dd}x\\
&=\displaystyle\int_{0}^{1}(N_{q+i}^p)^{(r)}(x) (N_{q+j}^p)^{(r)}(x)\;{\dd}x +\displaystyle\int_{0}^{1}(N_{q-k_i}^p)^{(r)}(x) (N_{q-k_j}^p)^{(r)}(x)\;{\dd}x\\
&\qquad-\displaystyle\int_{0}^{1}(N_{q+i}^p)^{(r)}(x) (N_{q-k_j}^p)^{(r)}(x)\;{\dd}x
-\displaystyle\int_{0}^{1}(N_{q-k_i}^p)^{(r)}(x) (N_{q+j}^p)^{(r)}(x)\;{\dd}x,
\end{aligned}
\end{equation*}
and using the definitions of $N^{p}_{i}$ in \eqref{bsplines0odd} and \eqref{bsplines0even}, we find
\begin{equation}\label{exprelmX0}
\begin{aligned}
X^{p,r,0}_{i,j}
&=(n+1)^{2r-1}\displaystyle\int_{-i-q+\frac{1-p_0}{2}}^{n+1-i-q+\frac{1-p_0}{2}} \mathcal{N}_p^{(r)}(t) \mathcal{N}_p^{(r)}(t+i-j)\;{\dd}t\\
&\qquad+(n+1)^{2r-1}\displaystyle\int_{k_j-q+\frac{1-p_0}{2}}^{n+1+k_j-q+\frac{1-p_0}{2}} \mathcal{N}_p^{(r)}(t) \mathcal{N}_p^{(r)}(t+k_i-k_j)\;{\dd}t\\
&\qquad-(n+1)^{2r-1}\displaystyle\int_{k_j-q+\frac{1-p_0}{2}}^{n+1+k_j-q+\frac{1-p_0}{2}} \mathcal{N}_p^{(r)}(t) \mathcal{N}_p^{(r)}(t-i-k_j)\;{\dd}t\\
&\qquad-(n+1)^{2r-1}\displaystyle\int_{k_i-q+\frac{1-p_0}{2}}^{n+1+k_i-q+\frac{1-p_0}{2}} \mathcal{N}_p^{(r)}(t) \mathcal{N}_p^{(r)}(t-k_i-j)\;{\dd}t.
\end{aligned}
\end{equation}
As stipulated earlier, the indices $k_i$ and $k_j$ depend on the value of $i$ and $j$, respectively. Thus, to compute $X^{p,r,0}$ it is imperative to address all the possible cases in \eqref{difexplbasis0}.
From Lemma~\ref{anti-sy} we know that $X^{p,r,0}$ is symmetric and centrosymmetric. This observation reduces the proof of \eqref{whatweneedtoproof0} to two main steps.

\paragraph{Step 1:}
We show that the block 
\begin{equation*}
\left(X^{p,r,0}_{i,j}\right)_{\left\lfloor\frac{p}{2}\right\rfloor+1\leq i,j\leq n-\left\lfloor\frac{p}{2}\right\rfloor}
\end{equation*}
is a Toeplitz matrix.
Let us fix $i,j\in\left\{\left\lfloor\frac{p}{2}\right\rfloor+1,\ldots,n-\left\lfloor\frac{p}{2}\right\rfloor\right\}$. In this case, the expression of the matrix element in \eqref{exprelmX0} simplifies to
\begin{equation*}
X^{p,r,0}_{i,j} = (n+1)^{2r-1}\displaystyle\int_{-i-q+\frac{1-p_0}{2}}^{n+1-i-q+\frac{1-p_0}{2}} \mathcal{N}_p^{(r)}(t) \mathcal{N}_p^{(r)}(t+i-j)\;{\dd}t.
\end{equation*}
By observing that
\begin{equation*}
n+1-i-q+\frac{1-p_0}{2}\geq p+1, \quad -i-q+\frac{1-p_0}{2}\leq0,
\end{equation*}
and by using \eqref{suppandregucb}, \eqref{innerprodcb}, we obtain
\begin{equation*}
X^{p,r,0}_{i,j}
=(n+1)^{2r-1}\displaystyle\int_{0}^{p+1} \mathcal{N}_p^{(r)}(t) \mathcal{N}_p^{(r)}(t+i-j)\;{\dd}t
=(n+1)^{2r-1}(-1)^{r}\mathcal{N}_{2p+1}^{(2r)}(p+1+i-j).
\end{equation*}
This concludes the proof of the first step.

\paragraph{Step 2:}
We show that the block
\begin{equation*}
\left(X_{i,j}^{p,r,0}\right)_{\substack{
1\leq i\leq \left\lfloor \frac{p}{2}\right\rfloor\\[2pt]
i\leq j\leq n}}
\end{equation*}
has a Toeplitz-minus-Hankel form, where the Hankel part starts with the third element of the associated Toeplitz matrix. More precisely, for the corresponding indices $(i,j)$ we have
\begin{equation}\label{X0termfinal}
X^{p,r,0}_{i,j}=(n+1)^{2r-1}\left((-1)^r\mathcal{N}_{2p+1}^{(2r)}(p+1+i-j)-(-1)^r\mathcal{N}_{2p+1}^{(2r)}(p+1+i+j) \right). 
\end{equation}
Let us fix $i\in \left\{1,\ldots,\left\lfloor\frac{p}{2}\right\rfloor\right\}$. In the following, we discuss the three different cases for $j$ arising from \eqref{difexplbasis0}.

If $j\in\left\{i,\ldots,\left\lfloor\frac{p}{2}\right\rfloor\right\}$, then we have $k_i=i$ and $k_j=j$ in \eqref{generalformbasis0}. In this case, the expression of the matrix element in \eqref{exprelmX0} reads as
\begin{equation}\label{X0termfp1}
\begin{aligned}
X^{p,r,0}_{i,j}
&=(n+1)^{2r-1}\displaystyle\int_{-i-q+\frac{1-p_0}{2}}^{n+1-i-q+\frac{1-p_0}{2}} \mathcal{N}_p^{(r)}(t) \mathcal{N}_p^{(r)}(t+i-j)\;{\dd}t\\
&\qquad+(n+1)^{2r-1}\displaystyle\int_{j-q+\frac{1-p_0}{2}}^{n+1+j-q+\frac{1-p_0}{2}} \mathcal{N}_p^{(r)}(t) \mathcal{N}_p^{(r)}(t+i-j)\;{\dd}t\\
&\qquad-(n+1)^{2r-1}\displaystyle\int_{j-q+\frac{1-p_0}{2}}^{n+1+j-q+\frac{1-p_0}{2}} \mathcal{N}_p^{(r)}(t) \mathcal{N}_p^{(r)}(t-i-j)\;{\dd}t\\
&\qquad-(n+1)^{2r-1}\displaystyle\int_{i-q+\frac{1-p_0}{2}}^{n+1+i-q+\frac{1-p_0}{2}} \mathcal{N}_p^{(r)}(t) \mathcal{N}_p^{(r)}(t-i-j)\;{\dd}t.
\end{aligned}
\end{equation}
Since $n\geq p+1$, we have
\begin{equation*}
n+1\pm i-q+\frac{1-p_0}{2}\geq p+1, \quad n+1+j-q+\frac{1-p_0}{2}\geq p+1.
\end{equation*}
Together with \eqref{suppandregucb}, this implies that the equality in \eqref{X0termfp1} is still valid if the upper bounds of the four integration intervals in \eqref{X0termfp1} are replaced by $p+1$. 
By leveraging Lemma~\ref{integcal} and the identity
\begin{equation*}
p+1+q-\frac{1-p_0}{2}=-q+\frac{1-p_0}{2},
\end{equation*}
we get
\begin{equation}\label{prof1t1}
\displaystyle\int_{j-q+\frac{1-p_0}{2}}^{p+1} \mathcal{N}_p^{(r)}(t) \mathcal{N}_p^{(r)}(t+i-j)\;{\dd}t=\displaystyle\int_{-i+j}^{-i-q+\frac{1-p_0}{2}} \mathcal{N}_p^{(r)}(t) \mathcal{N}_p^{(r)}(t+i-j)\;{\dd}t,
\end{equation}
and 
\begin{equation}\label{prof1t2}
\displaystyle\int_{i-q+\frac{1-p_0}{2}}^{p+1} \mathcal{N}_p^{(r)}(t) \mathcal{N}_p^{(r)}(t-i-j)\;{\dd}t=\displaystyle\int_{i+j}^{j-q+\frac{1-p_0}{2}} \mathcal{N}_p^{(r)}(t) \mathcal{N}_p^{(r)}(t-i-j)\;{\dd}t.
\end{equation}
By substituting \eqref{prof1t1}--\eqref{prof1t2} into \eqref{X0termfp1} and by using \eqref{suppandregucb}, \eqref{innerprodcb}, we arrive at \eqref{X0termfinal}.

Now, let us consider $j\in\left\{  \left\lfloor\frac{p}{2}\right\rfloor+1,\ldots,n- \left\lfloor\frac{p}{2}\right\rfloor \right\}$. In this case, the expression of the matrix element in \eqref{exprelmX0} reads as
\begin{equation*}
\begin{aligned}
X^{p,r,0}_{i,j}
&=(n+1)^{2r-1}\displaystyle\int_{-i-q+\frac{1-p_0}{2}}^{n+1-i-q+\frac{1-p_0}{2}} \mathcal{N}_p^{(r)}(t) \mathcal{N}_p^{(r)}(t+i-j)\;{\dd}t\\
&\qquad -(n+1)^{2r-1}\displaystyle\int_{i-q+\frac{1-p_0}{2}}^{n+1+i-q+\frac{1-p_0}{2}} \mathcal{N}_p^{(r)}(t) \mathcal{N}_p^{(r)}(t-i-j)\;{\dd}t.
\end{aligned}
\end{equation*}
By following the same reasoning as in the first case and by utilizing the implication
\begin{equation*}
t\leq \pm i-q+\frac{1-p_0}{2}\ \Longrightarrow\ t\mp i-j\leq 0,
\end{equation*}
we obtain \eqref{X0termfinal}.

Lastly, we take $j$ in the range $\left\{ n+1- \left\lfloor\frac{p}{2}\right\rfloor,\ldots,n\right\}$.
In this case, the expression of the matrix element in \eqref{exprelmX0} reads as
\begin{equation}\label{X0termfp3}
\begin{aligned}
X^{p,r,0}_{i,j}
&=(n+1)^{2r-1}\displaystyle\int_{-i-q+\frac{1-p_0}{2}}^{n+1-i-q+\frac{1-p_0}{2}} \mathcal{N}_p^{(r)}(t) \mathcal{N}_p^{(r)}(t+i-j)\;{\dd}t\\
&\qquad+(n+1)^{2r-1}\displaystyle\int_{-2(n+1)+j-q+\frac{1-p_0}{2}}^{-(n+1)+j-q+\frac{1-p_0}{2}} \mathcal{N}_p^{(r)}(t) \mathcal{N}_p^{(r)}(t+i-j+2(n+1))\;{\dd}t\\
&\qquad-(n+1)^{2r-1}\displaystyle\int_{-2(n+1)+j-q+\frac{1-p_0}{2}}^{-(n+1)+j-q+\frac{1-p_0}{2}} \mathcal{N}_p^{(r)}(t) \mathcal{N}_p^{(r)}(t-i-j+2(n+1))\;{\dd}t\\
&\qquad-(n+1)^{2r-1}\displaystyle\int_{i-q+\frac{1-p_0}{2}}^{n+1+i-q+\frac{1-p_0}{2}} \mathcal{N}_p^{(r)}(t) \mathcal{N}_p^{(r)}(t-i-j)\;{\dd}t.
\end{aligned}
\end{equation}
Since $n\geq p+1$, we find
\begin{equation*}
n+1\pm i-q+\frac{1-p_0}{2}\geq p+1,\quad -2(n+1)+j-q+\frac{1-p_0}{2}\leq0.
\end{equation*}
Together with \eqref{suppandregucb}, this implies that the equality in \eqref{X0termfp3} is still valid if the upper bounds of the first and the last integration interval in \eqref{X0termfp3} are replaced by $p+1$, and the lower bounds of the second and the third integration interval in \eqref{X0termfp3} are replaced by zero. 
Furthermore, from $n\geq p+\left\lfloor\frac{p}{2}\right\rfloor-1$ we deduce
\begin{equation*}
t\geq0\ \Longrightarrow\ t\pm i-j+2(n+1)\geq p+1,
\end{equation*}
and thus the second and the third integral in \eqref{X0termfp3} are both equal zero. 
Then, by following the same reasoning as in the previous two cases,
we arrive at \eqref{X0termfinal} again.
This concludes the proof of the second step.

Finally, regrouping both steps and using the symmetry and centrosymmetry of $X^{p,r,0}$, we arrive at \eqref{whatweneedtoproof0}, and hence the proof of the theorem is complete.

\subsection{Proof of Theorem~\ref{closedformneum}}\label{sec:A2}
The proof closely mirrors the one of Theorem~\ref{closedformdiri} (see the previous section).
Let us fix the $p,n\in\mathbb{N}^{\star}$ and $r\in\mathbb{N}$ such that $r\leq p$ and
\begin{equation*}
n\geq \max\left\{2p-\left\lfloor \frac{p}{2}\right\rfloor,2p-2\left\lfloor\frac{p}{2}\right\rfloor+1\right\}=\max\left\{p+\left\lfloor\frac{p}{2}\right\rfloor+p_0,p+1+p_0\right\}.
\end{equation*}
From the construction of the basis in \eqref{basis1podd} and \eqref{basis1peven}, and using the assumption $n\geq p+1+p_0$, we have 
\begin{equation*}
\begin{bmatrix}
N^p_{1,1}\\
N^p_{2,1}\\
\vdots\\
N^p_{n,1}\\
\end{bmatrix}= 
\begin{bmatrix}
\underbrace{\overbrace{\;\; J_{n} \;\;\bigg\vert}^{\lfloor \frac{p}{2}\rfloor+p_0} \;\; I_{n} \;\; \overbrace{ \bigg\vert \;\; J_{n} \;\; }^{\lfloor \frac{p}{2}\rfloor+p_0} }_{n+p+p_0}
\end{bmatrix}
\begin{bmatrix}
N^p_{-p}\\
\vdots\\
N^p_{0}\\
N^p_{1}\\
\vdots\\
N^p_{n+p_0-1}\\
\end{bmatrix}.
\end{equation*}
Through a straightforward computation, we obtain
\begin{equation}\label{difexplbasis1}
N^p_{i,1}=\left\{
\begin{array}{ll}
N^p_{-\left\lfloor\frac{p}{2}\right\rfloor-i}+N^p_{-\left\lfloor\frac{p}{2}\right\rfloor+i-1},& i\in\left\{1,\ldots,-q\right\},\\[0.2cm]
N^p_{-\left\lfloor\frac{p}{2}\right\rfloor+i-1},& i\in\left\{-q+1,\ldots,n+q\right\},\\[0.2cm]
N^p_{-\left\lfloor\frac{p}{2}\right\rfloor+i-1}+N^p_{-\left\lfloor\frac{p}{2}\right\rfloor+2n-i}, & i\in\left\{n+q+1,\ldots,n\right\}.
\end{array}
\right.
\end{equation}
When we assume their restriction to the interval $[0,1]$, the basis functions in \eqref{difexplbasis1} can be written in the following general form:
\begin{equation}\label{generalformbasis1}
N_{i,1}^p=N^p_{-\left\lfloor\frac{p}{2}\right\rfloor+i-1}+N^p_{-\left\lfloor\frac{p}{2}\right\rfloor-k_i},
\end{equation}
where $k_i$ represents again a certain index depending on the value of $i\in\{1,\ldots,n\}$.

Now, we look into the elements of the matrix $X^{p,r,1}$; see Definition~\ref{difmatrixX} with $b=1$. 
Similar to the derivation of \eqref{exprelmX0} and taking into account the definitions of $N^{p}_{i}$ in \eqref{bsplines1odd} and \eqref{bsplines1even}, we find
\begin{equation}\label{exprelmX1}
\begin{aligned}
X^{p,r,1}_{i,j}&=n^{2r-1}\displaystyle\int_{-i+1+\left\lfloor\frac{p}{2}\right\rfloor+\frac{p_0}{2}}^{n-i+1+\left\lfloor\frac{p}{2}\right\rfloor+\frac{p_0}{2}} \mathcal{N}_p^{(r)}(t) \mathcal{N}_p^{(r)}(t+i-j)\;{\dd}t\\
&\qquad+n^{2r-1}\displaystyle\int_{k_j+\left\lfloor\frac{p}{2}\right\rfloor+\frac{p_0}{2}}^{n+k_j+\left\lfloor\frac{p}{2}\right\rfloor+\frac{p_0}{2}} \mathcal{N}_p^{(r)}(t) \mathcal{N}_p^{(r)}(t+k_i-k_j)\;{\dd}t\\
&\qquad+n^{2r-1}\displaystyle\int_{k_j+\left\lfloor\frac{p}{2}\right\rfloor+\frac{p_0}{2}}^{n+k_j+\left\lfloor\frac{p}{2}\right\rfloor+\frac{p_0}{2}} \mathcal{N}_p^{(r)}(t) \mathcal{N}_p^{(r)}(t-(i-1+k_j))\;{\dd}t\\
&\qquad+n^{2r-1}\displaystyle\int_{k_i+\left\lfloor\frac{p}{2}\right\rfloor+\frac{p_0}{2}}^{n+k_i+\left\lfloor\frac{p}{2}\right\rfloor+\frac{p_0}{2}} \mathcal{N}_p^{(r)}(t) \mathcal{N}_p^{(r)}(t-(k_i+j-1))\;{\dd}t.
\end{aligned}
\end{equation}
A key observation in the proof of Theorem~\ref{closedformdiri} was the fact that the matrix $X^{p,r,0}$ is symmetric and centrosymmetric (Lemma~\ref{anti-sy}). In the same way, one can show that the matrix $X^{p,r,1}$ is also symmetric and centrosymmetric. This reduces the proof of \eqref{whatweneedtoproof1} to two main steps.

\paragraph{Step 1:} 
We show that the block 
\begin{equation*}
\left(X^{p,r,1}_{i,j}\right)_{-q+1\leq i,j\leq n+q}
\end{equation*}
is a Toeplitz matrix.
Let $i,j\in\{-q+1,\ldots,n+q\}$. Then, we can eliminate all terms corresponding to $k_i$ and $k_j$ in \eqref{exprelmX1}, i.e.,
\begin{equation*}
X^{p,r,1}_{i,j}=n^{2r-1}\displaystyle\int_{-i+1+\left\lfloor\frac{p}{2}\right\rfloor+\frac{p_0}{2}}^{n-i+1+\left\lfloor\frac{p}{2}\right\rfloor+\frac{p_0}{2}} \mathcal{N}_p^{(r)}(t) \mathcal{N}_p^{(r)}(t+i-j)\;{\dd}t.
\end{equation*}
By observing that 
\begin{equation*}
n-i+1+\left\lfloor\frac{p}{2}\right\rfloor+\frac{p_0}{2}\geq p+1, \quad
 -i+1+\left\lfloor\frac{p}{2}\right\rfloor+\frac{p_0}{2}\leq 0,
\end{equation*}
and by using \eqref{suppandregucb}, \eqref{innerprodcb}, we obtain
\begin{equation*}
X^{p,r,1}_{i,j}
=n^{2r-1}\displaystyle\int_{0}^{p+1} \mathcal{N}_p^{(r)}(t) \mathcal{N}_p^{(r)}(t+i-j)\;{\dd}t
=n^{2r-1}(-1)^{r}\mathcal{N}_{2p+1}^{(2r)}(p+1+i-j).
\end{equation*}
This concludes the proof of the first step.

\paragraph{Step 2:}
We show that the block 
\begin{equation*}
\left(X_{i,j}^{p,r,1}\right)_{\substack{
    1\leq i\leq -q\\[2pt]
    i\leq j\leq n}}
\end{equation*}
has a Toeplitz-plus-Hankel form, where the Hankel part starts with the second element of the associated Toeplitz matrix. More precisely, for the corresponding indices $(i,j)$ we have
\begin{equation}\label{X1termfinal}
X^{p,r,1}_{i,j}=n^{2r-1}\left((-1)^r\mathcal{N}_{2p+1}^{(2r)}(p+1+i-j)+(-1)^r\mathcal{N}_{2p+1}^{(2r)}(p+1+i+j-1) \right).
\end{equation}
Let us fix $i\in\{1,\ldots,-q\}$. In the following we discuss the three different cases for $j$ arising from \eqref{difexplbasis1}.

In the first case, where we consider $j\in\{i,\ldots,-q\}$, we have $k_i=i$ and $k_j=j$ in \eqref{generalformbasis1}. Hence, the expression of the matrix element in \eqref{exprelmX1} reads as
\begin{equation}\label{X1termfp1}
\begin{aligned}
X^{p,r,1}_{i,j}&=n^{2r-1}\displaystyle\int_{-i+1+\left\lfloor\frac{p}{2}\right\rfloor+\frac{p_0}{2}}^{n-i+1+\left\lfloor\frac{p}{2}\right\rfloor+\frac{p_0}{2}} \mathcal{N}_p^{(r)}(t) \mathcal{N}_p^{(r)}(t+i-j)\;{\dd}t\\
&\qquad+n^{2r-1}\displaystyle\int_{j+\left\lfloor\frac{p}{2}\right\rfloor+\frac{p_0}{2}}^{n+j+\left\lfloor\frac{p}{2}\right\rfloor+\frac{p_0}{2}} \mathcal{N}_p^{(r)}(t) \mathcal{N}_p^{(r)}(t+i-j)\;{\dd}t\\
&\qquad+n^{2r-1}\displaystyle\int_{j+\left\lfloor\frac{p}{2}\right\rfloor+\frac{p_0}{2}}^{n+j+\left\lfloor\frac{p}{2}\right\rfloor+\frac{p_0}{2}} \mathcal{N}_p^{(r)}(t) \mathcal{N}_p^{(r)}(t-i-j+1)\;{\dd}t\\
&\qquad+n^{2r-1}\displaystyle\int_{i+\left\lfloor\frac{p}{2}\right\rfloor+\frac{p_0}{2}}^{n+i+\left\lfloor\frac{p}{2}\right\rfloor+\frac{p_0}{2}} \mathcal{N}_p^{(r)}(t) \mathcal{N}_p^{(r)}(t-i-j+1)\;{\dd}t.
\end{aligned}
\end{equation}
Since $n\geq p+1+p_0$, we have
\begin{equation*}
n\pm i+\left\lfloor\frac{p}{2}\right\rfloor+\frac{p_0}{2}\geq p+1, \quad
n+j+\left\lfloor\frac{p}{2}\right\rfloor+\frac{p_0}{2}\geq p+1.
\end{equation*}
Together with \eqref{suppandregucb}, this implies that the equality in \eqref{X1termfp1} is still valid if the upper bounds of the four integration intervals in \eqref{X1termfp1} are replaced by $p+1$. 
By exploiting Lemma~\ref{integcal} and the identity
\begin{equation*}
p-\left\lfloor\frac{p}{2}\right\rfloor-\frac{p_0}{2}=\left\lfloor\frac{p}{2}\right\rfloor+\frac{p_0}{2},
\end{equation*}
we get 
\begin{equation}\label{the2term1}
\displaystyle\int_{j+\left\lfloor\frac{p}{2}\right\rfloor+\frac{p_0}{2}}^{p+1} \mathcal{N}_p^{(r)}(t) \mathcal{N}_p^{(r)}(t+i-j)\;{\dd}t=\displaystyle\int_{-i+j}^{-i+1+\left\lfloor\frac{p}{2}\right\rfloor+\frac{p_0}{2}} \mathcal{N}_p^{(r)}(t) \mathcal{N}_p^{(r)}(t+i-j)\;{\dd}t,
\end{equation}
and 
\begin{equation}\label{the2term2}
\displaystyle\int_{i+\left\lfloor\frac{p}{2}\right\rfloor+\frac{p_0}{2}}^{p+1} \mathcal{N}_p^{(r)}(t) \mathcal{N}_p^{(r)}(t-i-j+1)\;{\dd}t=\displaystyle\int_{i+j-1}^{j+\left\lfloor\frac{p}{2}\right\rfloor+\frac{p_0}{2}} \mathcal{N}_p^{(r)}(t) \mathcal{N}_p^{(r)}(t-i-j+1)\;{\dd}t.
\end{equation}
By substituting \eqref{the2term1}--\eqref{the2term2} into \eqref{X1termfp1} and by using \eqref{suppandregucb}, \eqref{innerprodcb},
we infer \eqref{X1termfinal}.

Now, we fix $j\in\{-q+1,\ldots,n+q\}$. In this case, the expression of the matrix element in \eqref{exprelmX1} reads as 
\begin{equation*}
\begin{aligned}
X^{p,r,1}_{i,j}&=n^{2r-1}\displaystyle\int_{-i+1+\left\lfloor\frac{p}{2}\right\rfloor+\frac{p_0}{2}}^{n-i+1+\left\lfloor\frac{p}{2}\right\rfloor+\frac{p_0}{2}} \mathcal{N}_p^{(r)}(t) \mathcal{N}_p^{(r)}(t+i-j)\;{\dd}t\\
&\qquad+n^{2r-1}\displaystyle\int_{i+\left\lfloor\frac{p}{2}\right\rfloor+\frac{p_0}{2}}^{n+i+\left\lfloor\frac{p}{2}\right\rfloor+\frac{p_0}{2}} \mathcal{N}_p^{(r)}(t) \mathcal{N}_p^{(r)}(t-i-j+1))\;{\dd}t.
\end{aligned}
\end{equation*}
By following the same reasoning as in the first case and by utilizing the implication
\begin{equation*}
t\leq \pm i+\left\lfloor\frac{p}{2} \right\rfloor +\frac{p_0}{2}\ \Longrightarrow\ t\mp i-j+1\leq 0,
\end{equation*}
we obtain \eqref{X1termfinal}.

In the last case, where we consider $j\in\{n+q+1,\ldots,n\}$, the expression of the matrix element in \eqref{exprelmX1} reads as 
\begin{equation}\label{X1termfp3}
\begin{aligned}
X^{p,r,1}_{i,j}&=n^{2r-1}\displaystyle\int_{-i+1+\left\lfloor\frac{p}{2}\right\rfloor+\frac{p_0}{2}}^{n-i+1+\left\lfloor\frac{p}{2}\right\rfloor+\frac{p_0}{2}} \mathcal{N}_p^{(r)}(t) \mathcal{N}_p^{(r)}(t+i-j)\;{\dd}t\\
&\qquad+n^{2r-1}\displaystyle\int_{j-2n+\left\lfloor\frac{p}{2}\right\rfloor+\frac{p_0}{2}}^{-n+j+\left\lfloor\frac{p}{2}\right\rfloor+\frac{p_0}{2}} \mathcal{N}_p^{(r)}(t) \mathcal{N}_p^{(r)}(t+i-j+2n)\;{\dd}t\\
&\qquad+n^{2r-1}\displaystyle\int_{j-2n+\left\lfloor\frac{p}{2}\right\rfloor+\frac{p_0}{2}}^{-n+j+\left\lfloor\frac{p}{2}\right\rfloor+\frac{p_0}{2}} \mathcal{N}_p^{(r)}(t) \mathcal{N}_p^{(r)}(t-i-j+2n+1)\;{\dd}t\\
&\qquad+n^{2r-1}\displaystyle\int_{i+\left\lfloor\frac{p}{2}\right\rfloor+\frac{p_0}{2}}^{n+i+\left\lfloor\frac{p}{2}\right\rfloor+\frac{p_0}{2}} \mathcal{N}_p^{(r)}(t) \mathcal{N}_p^{(r)}(t-i-j+1)\;{\dd}t.
\end{aligned}
\end{equation}
Since $n\geq p+1+p_0$, we find
\begin{equation*}
n\pm i+\left\lfloor\frac{p}{2}\right\rfloor+\frac{p_0}{2}\geq p+1,\quad j-2n+\left\lfloor\frac{p}{2}\right\rfloor+\frac{p_0}{2}\leq0.
\end{equation*}
Together with \eqref{suppandregucb}, this implies that the equality in \eqref{X1termfp3} is still valid if the upper bounds of the first and the last integration interval in \eqref{X1termfp3} are replaced by $p+1$, and the lower bounds of the second and the third integration interval in \eqref{X1termfp3} are replaced by zero. 
Furthermore, from $n\geq p+\left\lfloor\frac{p}{2}\right\rfloor+p_0$ we deduce
\begin{equation*}
t\geq0\ \Longrightarrow\ \{t-i-j+2n+1, t+i-j+2n\}\geq p+1,
\end{equation*}
and thus the second and the third integral in \eqref{X1termfp3} are both equal zero. 
Then, by following the same reasoning as in the previous two cases,
we arrive at \eqref{X1termfinal} again.
The latter concludes the proof of the second step and consequently the proof of the theorem is complete.

\subsection{Proof of Theorem~\ref{closedformmixed}}\label{sec:A3}
Also here, the proof is similar to the one of Theorem~\ref{closedformdiri} (see Section~\ref{sec:A1}).
Let us fix $p,n\in\mathbb{N}^{\star}$ and $r\in\mathbb{N}$ such that $n\geq \max\left\{p+1,p+\left\lfloor\frac{p}{2}\right\rfloor\right\}$ and $r\leq p$. 
From the construction of the basis in \eqref{basis2podd} and \eqref{basis2peven}, and using the assumption $n\geq p+1$, we have 
\begin{equation*}
\begin{bmatrix}
N^p_{1,2}\\
N^p_{2,2}\\
\vdots\\
N^p_{n,2}\\
\end{bmatrix} = 
\begin{bmatrix}
\underbrace{\overbrace{\; -J_{n} \;\bigg\vert}^{\lfloor \frac{p}{2}\rfloor}\; O_n \;\bigg\vert \; I_{n}  \; \overbrace{   \bigg\vert \; J_{n}  \;   }^{\lfloor \frac{p}{2}\rfloor+p_0}  }_{n+p+1}
\end{bmatrix}
\begin{bmatrix}
N^p_{-p}\\
\vdots\\
N^p_{0}\\
N^p_{1}\\
\vdots\\
N^p_{n}\\
\end{bmatrix},
\end{equation*}
and we can express the basis functions in the following general form:
\begin{equation*}
N_{i,2}^p=N^p_{q+i}+\gamma_i N^p_{q-k_i},\quad i=1,\ldots,n,
\end{equation*}
where
\begin{equation}\label{exprsgammaka}
\left(\gamma_i,k_i\right)=\left\{
\begin{array}{ll}
(-1,i),  & i\in\left\{1,\ldots,\left\lfloor\frac{p}{2}\right\rfloor\right\}, \\[0.2cm]
(0,\cdot),  & i\in\left\{\left\lfloor\frac{p}{2}\right\rfloor+1,\ldots,n+q\right\}, \\[0.2cm]
( 1,i-(2n+1)),  & i\in\left\{n+q+1,\ldots,n\right\}. \\
\end{array}
\right.
\end{equation}
Now, we look into the elements of the matrix $X^{p,r,2}$; see Definition~\ref{difmatrixX} with $b=2$. 
Similar to the derivation of \eqref{exprelmX0} and taking into account the definitions of $N^{p}_{i}$ in \eqref{bsplines2odd} and \eqref{bsplines2even}, we find
\begin{equation}\label{exprelmX2}
\begin{aligned}
X_{i,j}^{p,r,2}&=\left(\frac{2n+1}{2}\right)^{2r-1}\displaystyle\int_{-i-q+\frac{1-p_0}{2}}^{\frac{2n+1}{2}-i-q+\frac{1-p_0}{2}} \mathcal{N}_p^{(r)}(t) \mathcal{N}_p^{(r)}(t+i-j)\;{\dd}t\\
&\qquad+\gamma_i \gamma_j\left(\frac{2n+1}{2}\right)^{2r-1}\displaystyle\int_{k_j-q+\frac{1-p_0}{2}}^{\frac{2n+1}{2}+k_j-q+\frac{1-p_0}{2}} \mathcal{N}_p^{(r)}(t) \mathcal{N}_p^{(r)}(t+k_i-k_j)\;{\dd}t\\
&\qquad+\gamma_j\left(\frac{2n+1}{2}\right)^{2r-1}\displaystyle\int_{k_j-q+\frac{1-p_0}{2}}^{\frac{2n+1}{2}+k_j-q+\frac{1-p_0}{2}} \mathcal{N}_p^{(r)}(t) \mathcal{N}_p^{(r)}(t-i-k_j)\;{\dd}t\\
&\qquad+\gamma_i\left(\frac{2n+1}{2}\right)^{2r-1}\displaystyle\int_{k_i-q+\frac{1-p_0}{2}}^{\frac{2n+1}{2}+k_i-q+\frac{1-p_0}{2}} \mathcal{N}_p^{(r)}(t) \mathcal{N}_p^{(r)}(t-k_i-j)\;{\dd}t.
\end{aligned}
\end{equation}
In the preceding proofs, we relied on both symmetry and centrosymmetry of the matrices $X^{p,r,0}$ and $X^{p,r,1}$. However, the matrix $X^{p,r,2}$ lacks centrosymmetry and thus, we can only leverage the symmetry of the matrix to proceed. This introduces an additional step.

\paragraph{Step 1:}
We show that the block 
\begin{equation*}
\left(X^{p,r,2}_{i,j}\right)_{\left\lfloor\frac{p}{2}\right\rfloor+1\leq i,j\leq n+q}
\end{equation*}
is a Toeplitz matrix.
Let us fix $i,j\in\left\{\left\lfloor\frac{p}{2}\right\rfloor+1,\ldots, n+q\right\}$. In this case, we have $\gamma_i=\gamma_j=0$ and the expression of the matrix element in \eqref{exprelmX2} simplifies to
\begin{equation*}
X_{i,j}^{p,r,2}=\left(\frac{2n+1}{2}\right)^{2r-1}\displaystyle\int_{-i-q+\frac{1-p_0}{2}}^{\frac{2n+1}{2}-i-q+\frac{1-p_0}{2}} \mathcal{N}_p^{(r)}(t) \mathcal{N}_p^{(r)}(t+i-j)\;{\dd}t.
\end{equation*}
By observing that 
\begin{equation*}
\frac{2n+1}{2}-i-q+\frac{1-p_0}{2}\geq p+1, \quad -i-q+\frac{1-p_0}{2}\leq 0,
\end{equation*}
and by using \eqref{suppandregucb}, \eqref{innerprodcb}, we deduce
\begin{equation*}
X_{i,j}^{p,r,2}=\left(\frac{2n+1}{2}\right)^{2r-1}\displaystyle\int_{0}^{p+1} \mathcal{N}_p^{(r)}(t) \mathcal{N}_p^{(r)}(t+i-j)\;{\dd}t
=\left(\frac{2n+1}{2}\right)^{2r-1} (-1)^{r}\mathcal{N}_{2p+1}^{(2r)}(p+1+i-j).
\end{equation*}
This concludes the proof of the first step.

\paragraph{Step 2:} 
We show that the block 
\begin{equation*}
\left(X^{p,r,2}_{i,j}\right)_{\substack{
    1\leq i\leq \left\lfloor\frac{p}{2}\right\rfloor\\[2pt]
    i\leq j\leq n}}
\end{equation*}
has a Toeplitz-minus-Hankel form, where the Hankel part starts with the third element of the associated Toeplitz matrix. This form is the same as the Dirichlet case. More precisely, for the corresponding indices $(i,j)$ we have
\begin{equation}\label{X2term2final}
X^{p,r,2}_{i,j}=\left(\frac{2n+1}{2}\right)^{2r-1}\left((-1)^r\mathcal{N}_{2p+1}^{(2r)}(p+1+i-j)-(-1)^r\mathcal{N}_{2p+1}^{(2r)}(p+1+i+j) \right). 
\end{equation}
Let us fix $i\in\left\{1,\ldots,\left\lfloor\frac{p}{2}\right\rfloor\right\}$. In the following we discuss the three different cases for $j$ arising from \eqref{exprsgammaka}.

Firstly, let us consider $j\in\left\{i,\ldots,\left\lfloor\frac{p}{2}\right\rfloor\right\}$. In this case, we have $\gamma_i=\gamma_j=-1$, $k_i=i$, and $k_j=j$. Then, the expression of the matrix element in \eqref{exprelmX2} reads as
\begin{equation}\label{X2term2fp1}
\begin{aligned}
X_{i,j}^{p,r,2}&=\left(\frac{2n+1}{2}\right)^{2r-1}\displaystyle\int_{-i-q+\frac{1-p_0}{2}}^{\frac{2n+1}{2}-i-q+\frac{1-p_0}{2}} \mathcal{N}_p^{(r)}(t) \mathcal{N}_p^{(r)}(t+i-j)\;{\dd}t\\
&\qquad+\left(\frac{2n+1}{2}\right)^{2r-1}\displaystyle\int_{j-q+\frac{1-p_0}{2}}^{\frac{2n+1}{2}+j-q+\frac{1-p_0}{2}} \mathcal{N}_p^{(r)}(t) \mathcal{N}_p^{(r)}(t+i-j)\;{\dd}t\\
&\qquad-\left(\frac{2n+1}{2}\right)^{2r-1}\displaystyle\int_{j-q+\frac{1-p_0}{2}}^{\frac{2n+1}{2}+j-q+\frac{1-p_0}{2}} \mathcal{N}_p^{(r)}(t) \mathcal{N}_p^{(r)}(t-i-j)\;{\dd}t\\
&\qquad-\left(\frac{2n+1}{2}\right)^{2r-1}\displaystyle\int_{i-q+\frac{1-p_0}{2}}^{\frac{2n+1}{2}+i-q+\frac{1-p_0}{2}} \mathcal{N}_p^{(r)}(t) \mathcal{N}_p^{(r)}(t-i-j)\;{\dd}t.
\end{aligned}
\end{equation}
Since $n\geq p+1$, we have
\begin{equation*}
\frac{2n+1}{2}\pm i-q+\frac{1-p_0}{2}\geq p+1, \quad
\frac{2n+1}{2}+j-q+\frac{1-p_0}{2}\geq p+1.
\end{equation*}
Together with \eqref{suppandregucb}, this implies that the equality in \eqref{X2term2fp1} is still valid if the upper bounds of the four integration intervals in \eqref{X2term2fp1} are replaced by $p+1$. 
By using Lemma~\ref{integcal} and the identity
\begin{equation}\label{identitymixed}
p+1+q-\frac{1-p_0}{2}=-q+\frac{1-p_0}{2},
\end{equation}
we get 
\begin{equation}\label{term1mixed}
\displaystyle\int_{j-q+\frac{1-p_0}{2}}^{p+1} \mathcal{N}_p^{(r)}(t) \mathcal{N}_p^{(r)}(t+i-j)\;{\dd}t=\displaystyle\int_{-i+j}^{-i-q+\frac{1-p_0}{2}} \mathcal{N}_p^{(r)}(t) \mathcal{N}_p^{(r)}(t+i-j)\;{\dd}t,
\end{equation}
and
\begin{equation}\label{term2mixed}
\displaystyle\int_{i-q+\frac{1-p_0}{2}}^{p+1} \mathcal{N}_p^{(r)}(t) \mathcal{N}_p^{(r)}(t-(i+j))\;{\dd}t=\displaystyle\int_{i+j}^{j-q+\frac{1-p_0}{2}} \mathcal{N}_p^{(r)}(t) \mathcal{N}_p^{(r)}(t-i-j)\;{\dd}t.
\end{equation}
By substituting \eqref{term1mixed}--\eqref{term2mixed} into \eqref{X2term2fp1} and by using \eqref{suppandregucb}, \eqref{innerprodcb}, we arrive at \eqref{X2term2final}.

Now, let us fix $j\in\left\{ \left\lfloor\frac{p}{2}\right\rfloor+1,\ldots,n+q \right\}$ and so $\gamma_j=0$. In this case, the expression of the matrix element in \eqref{exprelmX2} reads as
\begin{equation*}
\begin{aligned}
X_{i,j}^{p,r,2}&=\left(\frac{2n+1}{2}\right)^{2r-1}\displaystyle\int_{-i-q+\frac{1-p_0}{2}}^{\frac{2n+1}{2}-i-q+\frac{1-p_0}{2}} \mathcal{N}_p^{(r)}(t) \mathcal{N}_p^{(r)}(t+i-j)\;{\dd}t\\
&\qquad-\left(\frac{2n+1}{2}\right)^{2r-1}\displaystyle\int_{i-q+\frac{1-p_0}{2}}^{\frac{2n+1}{2}+i-q+\frac{1-p_0}{2}} \mathcal{N}_p^{(r)}(t) \mathcal{N}_p^{(r)}(t-i-j)\;{\dd}t.
\end{aligned}
\end{equation*}
By following the same reasoning as in the first case and by using the implication
\begin{equation*}
t\leq \pm i-q+\frac{1-p_0}{2}\ \Longrightarrow\ t\mp i-j\leq 0,
\end{equation*}
we obtain \eqref{X2term2final}.

The last case addresses $j\in\{n+q+1,\ldots,n\}$, where we have $\gamma_j=1$ and $k_j=j-(2n+1)$. Then, the expression of the matrix element in \eqref{exprelmX2} reads as
\begin{equation}\label{X2term2fp3}
\begin{aligned}
X_{i,j}^{p,r,2}&=\left(\frac{2n+1}{2}\right)^{2r-1}\displaystyle\int_{-i-q+\frac{1-p_0}{2}}^{\frac{2n+1}{2}-i-q+\frac{1-p_0}{2}} \mathcal{N}_p^{(r)}(t) \mathcal{N}_p^{(r)}(t+i-j)\;{\dd}t\\
&\qquad-\left(\frac{2n+1}{2}\right)^{2r-1}\displaystyle\int_{-(2n+1)+j-q+\frac{1-p_0}{2}}^{-\frac{2n+1}{2}+j-q+\frac{1-p_0}{2}} \mathcal{N}_p^{(r)}(t) \mathcal{N}_p^{(r)}(t+i-j+2n+1)\;{\dd}t\\
&\qquad+\left(\frac{2n+1}{2}\right)^{2r-1}\displaystyle\int_{-(2n+1)+j-q+\frac{1-p_0}{2}}^{-\frac{2n+1}{2}+j-q+\frac{1-p_0}{2}} \mathcal{N}_p^{(r)}(t) \mathcal{N}_p^{(r)}(t-i-j+2n+1)\;{\dd}t\\
&\qquad-\left(\frac{2n+1}{2}\right)^{2r-1}\displaystyle\int_{i-q+\frac{1-p_0}{2}}^{\frac{2n+1}{2}+i-q+\frac{1-p_0}{2}} \mathcal{N}_p^{(r)}(t) \mathcal{N}_p^{(r)}(t-i-j)\;{\dd}t.
\end{aligned}
\end{equation}
Since $n\geq p+1$, we find
\begin{equation*}
\frac{2n+1}{2}\pm i-q+\frac{1-p_0}{2}\geq p+1, \quad -(2n+1)+j-q+\frac{1-p_0}{2}\leq0.
\end{equation*}
Together with \eqref{suppandregucb}, this implies that the equality in \eqref{X2term2fp3} is still valid if the upper bounds of the first and the last integration interval in \eqref{X2term2fp3} are replaced by $p+1$, and the lower bounds of the second and the third integration interval in \eqref{X2term2fp3} are replaced by zero. 
Furthermore, from $n\geq p+\left\lfloor\frac{p}{2}\right\rfloor$ we deduce
\begin{equation*}
t\geq 0\ \Longrightarrow\  t\pm i-j+2n+1\geq p+1,
\end{equation*}
and thus the second and the third integral in \eqref{X2term2fp3} are both equal zero. 
Then, by following the same reasoning as in the previous two cases,
we infer \eqref{X2term2final}.
This concludes the proof of the second step.

Finally, we observe that the assumption $n\geq p+\left\lfloor\frac{p}{2}\right\rfloor$ employed at the end of the last case of the second step also implies that for $j\geq n+q+1$, the  Hankel part is zero, i.e.,
\begin{equation}\label{X2term2hankel}
(-1)^r\mathcal{N}_{2p+1}^{(2r)}(p+1+i+j)=0,\quad i=1,\ldots,\left\lfloor\frac{p}{2}\right\rfloor,\quad j=n+q+1,\ldots,n. 
\end{equation}

\paragraph{Step 3:}
This additional step is necessary by the absence of the centrosymmetry property of $X_{i,j}^{p,r,2}$.
We show that the block 
\begin{equation*}
\left(X^{p,r,2}_{i,j}\right)_{\substack{n+1+q\leq i\leq n\\ 1\leq j\leq n}}
\end{equation*}
has a Toeplitz-plus-Hankel form, where the Hankel part starts with the second element of the associated Toeplitz matrix. This form is the same as the Neumann case. More precisely, for the corresponding indices $(i,j)$ we have
\begin{equation}\label{X2term3final}
X^{p,r,2}_{i,j}=\left(\frac{2n+1}{2}\right)^{2r-1}\left((-1)^r\mathcal{N}_{2p+1}^{(2r)}(p+1+i-j)+(-1)^r\mathcal{N}_{2p+1}^{(2r)}(p+1-i-j+2n+1) \right).
\end{equation}
As before, we fix $i\in\{n+q+1,\ldots,n\}$ and we discuss the three different cases for $j$.

Firstly, we consider $j\in\{n+q+1,\ldots,n\}$. In this case, we have $\gamma_i=\gamma_j=1$, $k_i=i-(2n+1)$, and $k_j=j-(2n+1)$. Then, the expression of the matrix element in \eqref{exprelmX2} reads as
\begin{equation}\label{X2term3p1}
\begin{aligned}
X_{i,j}^{p,r,2}&=\left(\frac{2n+1}{2}\right)^{2r-1}\displaystyle\int_{-i-q+\frac{1-p_0}{2}}^{\frac{2n+1}{2}-i-q+\frac{1-p_0}{2}} \mathcal{N}_p^{(r)}(t) \mathcal{N}_p^{(r)}(t+i-j)\;{\dd}t\\
&\qquad+\left(\frac{2n+1}{2}\right)^{2r-1}\displaystyle\int_{-(2n+1)+j-q+\frac{1-p_0}{2}}^{-\frac{2n+1}{2}+j-q+\frac{1-p_0}{2}} \mathcal{N}_p^{(r)}(t) \mathcal{N}_p^{(r)}(t+i-j)\;{\dd}t\\
&\qquad+\left(\frac{2n+1}{2}\right)^{2r-1}\displaystyle\int_{-(2n+1)+j-q+\frac{1-p_0}{2}}^{-\frac{2n+1}{2}+j-q+\frac{1-p_0}{2}} \mathcal{N}_p^{(r)}(t) \mathcal{N}_p^{(r)}(t-i-j+2n+1)\;{\dd}t\\
&\qquad+\left(\frac{2n+1}{2}\right)^{2r-1}\displaystyle\int_{-(2n+1)+i-q+\frac{1-p_0}{2}}^{-\frac{2n+1}{2}+i-q+\frac{1-p_0}{2}} \mathcal{N}_p^{(r)}(t) \mathcal{N}_p^{(r)}(t-i-j+2n+1)\;{\dd}t.
\end{aligned}
\end{equation}
Taking into account $n\geq p+1$, one can check that
\begin{equation*}
-i-q+\frac{1-p_0}{2}\leq 0,\quad -(2n+1)+j-q+\frac{1-p_0}{2}\leq 0,\quad -(2n+1)+i-q+\frac{1-p_0}{2}\leq 0.
\end{equation*}
Together with \eqref{suppandregucb}, this implies that the equality in \eqref{X2term3p1} is still valid if the lower bounds of the four integration intervals in \eqref{X2term3p1} are replaced by zero. 
By leveraging Lemma~\ref{integcal} and the identity \eqref{identitymixed},
we get
\begin{equation}\label{t1s3mixed}
\displaystyle\int_{0}^{-\frac{2n+1}{2}+j-q+\frac{1-p_0}{2}} \mathcal{N}_p^{(r)}(t) \mathcal{N}_p^{(r)}(t+i-j)\;{\dd}t=\displaystyle\int_{\frac{2n+1}{2}-i-q+\frac{1-p_0}{2}}^{p+1-i+j} \mathcal{N}_p^{(r)}(t) \mathcal{N}_p^{(r)}(t+i-j)\;{\dd}t,
\end{equation}
and
\begin{equation}\label{t2s3mixed}
\displaystyle\int_{0}^{-\frac{2n+1}{2}+i-q+\frac{1-p_0}{2}} \mathcal{N}_p^{(r)}(t) \mathcal{N}_p^{(r)}(t-i-j+2n+1)\;{\dd}t=\displaystyle\int_{-\frac{2n+1}{2}+j-q+\frac{1-p_0}{2}}^{p+1-(2n+1)+i+j} \mathcal{N}_p^{(r)}(t) \mathcal{N}_p^{(r)}(t-i-j+2n+1)\;{\dd}t.
\end{equation}
By substituting equations \eqref{t1s3mixed}--\eqref{t2s3mixed} into \eqref{X2term3p1} and by using \eqref{suppandregucb}, \eqref{innerprodcb},
we infer \eqref{X2term3final}.

Now, let us consider the case $j\in\left\{ \left\lfloor\frac{p}{2}\right\rfloor+1,\ldots,n+q \right\}$, where $\gamma_j=0$. Then, the expression of the matrix element in \eqref{exprelmX2} reads as
\begin{equation*}
\begin{aligned}
X_{i,j}^{p,r,2}&=\left(\frac{2n+1}{2}\right)^{2r-1}\displaystyle\int_{-i-q+\frac{1-p_0}{2}}^{\frac{2n+1}{2}-i-q+\frac{1-p_0}{2}} \mathcal{N}_p^{(r)}(t) \mathcal{N}_p^{(r)}(t+i-j)\;{\dd}t\\
&\qquad+\left(\frac{2n+1}{2}\right)^{2r-1}\displaystyle\int_{-(2n+1)+i-q+\frac{1-p_0}{2}}^{-\frac{2n+1}{2}+i-q+\frac{1-p_0}{2}} \mathcal{N}_p^{(r)}(t) \mathcal{N}_p^{(r)}(t-i-j+2n+1)\;{\dd}t.
\end{aligned}
\end{equation*}
By following the same reasoning as in the first case and by using the implication
\begin{equation*}
t\geq \frac{2n+1}{2}\pm i-q+\frac{1-p_0}{2} \ \Longrightarrow\ t\mp i-j\geq p+1,
\end{equation*}
we obtain \eqref{X2term3final}.

For the last case, where $j\in\left\{1,\ldots,\left\lfloor\frac{p}{2}\right\rfloor\right\}$,  we exploit the symmetry of the matrix, the result of Step 2, and the observation in \eqref{X2term2hankel}, to deduce that
\begin{equation*}
X^{p,r,2}_{i,j}=\left(\frac{2n+1}{2}\right)^{2r-1}(-1)^r\mathcal{N}_{2p+1}^{(2r)}(p+1+i-j).
\end{equation*}
Note that, since $n\geq p+\left\lfloor\frac{p}{2}\right\rfloor$, this expression is equal to \eqref{X2term3final} for $j\leq\left\lfloor\frac{p}{2}\right\rfloor$.
This concludes the proof of the third step and consequently the proof of the theorem is complete.

\section{Matrix algebras}\label{sec:B}
In this appendix, we give a compact overview of $\tau$ matrix algebras, focusing specifically on those encountered in our analysis. Additional algebras can be found in \cite{bozzo1995use} and \cite{di1995matrix}.

Following \cite{bozzo1995use} (see also \cite{ekstrom2018eigenvalues}), we consider the following class of real symmetric matrices
\begin{equation*}
T_n(\varepsilon,\varphi)=\begin{bmatrix}
\varepsilon&1&0&\cdots&0  \\
1&0&\ddots&\ddots&\vdots\\
0&\ddots&\ddots&\ddots&0\\
\vdots &\ddots&\ddots&0&1\\
0&\cdots&0&1&\varphi\\
\end{bmatrix},\quad \varepsilon,\varphi\in\{-1,0,1\}.
\end{equation*}
Then, the matrix algebras generated over $\mathbb{R}$ by $T_n(\varepsilon,\varphi)$ are defined as
\begin{equation*}
\tau_n(\varepsilon, \varphi)=\left\{ Q_n(\varepsilon,\varphi) \left(\diag_{j=1,\ldots,n}[a_j]\right)Q_n^T(\varepsilon,\varphi):\;  (a_j)_{1\leq j\leq n}\in\mathbb{R}^n  \right\},
\end{equation*}
where $Q_n(\varepsilon,\varphi)$ represents an orthogonal matrix determined through the diagonalization of $T_n(\varepsilon,\varphi)$:
\begin{equation*}
T_n(\varepsilon,\varphi)=Q_n(\varepsilon,\varphi)\Lambda(\varepsilon,\varphi) Q_n^T(\varepsilon,\varphi).
\end{equation*}
It can be observed from our closed-form expressions of eigenvalues and eigenvectors (see Sections~\ref{sec:closedformoptspaces} and~\ref{sec:reducedspaces}) that our mass and stiffness matrices fall within one of the following algebras.
\begin{itemize}
\item Consider $\varepsilon=\varphi=0$. Then,
\begin{equation*}
Q_n(0,0)=\sqrt{\frac{2}{n+1}}\left[\sin\left(\frac{ij\pi}{n+1}\right)\right]_{i,j=1}^{n},\quad \Lambda(0,0)=\diag_{j=1,\ldots,n}\left[2\cos\left(\frac{j\pi}{n+1}\right)\right].
\end{equation*}
In this particular case, the matrix $Q_n(0,0)$ is also symmetric.
\item Consider $\varepsilon=\varphi=1$. Then,
\begin{equation*}
\begin{aligned}
Q_n(1,1)&=\sqrt{\frac{2}{n}}\left[c_j\cos\left(\frac{(j-1)\pi}{n}\left(i-\frac{1}{2}\right)\right)\right]_{i,j=1}^{n},  \quad
c_j=\left\{\begin{array}{ll}
    \frac{1}{\sqrt{2}}, & j=1, \\
    1, & \text{otherwise},
\end{array}
\right.  \\
\Lambda(1,1)&=\diag_{j=1,\ldots,n}\left[2\cos\left(\frac{(j-1)\pi}{n}\right)\right].
\end{aligned}
\end{equation*}
\item Consider $\varepsilon=0$ and $\varphi=1$. Then,
\begin{equation*}
Q_n(0,1)=\sqrt{\frac{4}{2n+1}}\left[\sin\left(\frac{i(2j-1)\pi}{2n+1}\right)\right]_{i,j=1}^{n},\quad \Lambda(0,1)=\diag_{j=1,\ldots,n}\left[2\cos\left(\frac{(2j-1)\pi}{2n+1}\right)\right].
\end{equation*}
\item Consider $\varepsilon=\varphi=-1$. Then,
\begin{equation*}
\begin{aligned}
Q_n(-1,-1)&=\sqrt{\frac{2}{n}}\left[c_j\sin\left(\frac{j\pi}{n}\left(i-\frac{1}{2}\right)\right)\right]_{i,j=1}^{n},  \quad
c_j=\left\{\begin{array}{ll}
    \frac{1}{\sqrt{2}}, & j=n, \\
    1, & \text{otherwise},
\end{array}
\right.\\
\Lambda(-1,-1)&=\diag_{j=1,\ldots,n}\left[2\cos\left(\frac{j\pi}{n}\right)\right].
\end{aligned}
\end{equation*}
\end{itemize} 

For additional properties of elements within the matrix algebras $\tau_n(\varepsilon, \varphi)$ in terms of sine and cosine transforms, as well as the computational cost of the matrix-vector product $Q_n(\varepsilon,\varphi) \bm{v}$ for a given vector $\bm{v}$ in a preconditioning setting, we refer the reader to \cite{benedetto2000optimal} along with the references cited therein.


\begin{thebibliography}{10}

\bibitem{bapat1991hypergroups}
R.~B. Bapat and V.~S. Sunder.
\newblock On hypergroups of matrices.
\newblock {\em Linear and Multilinear Algebra}, 29:125--140, 1991.

\bibitem{Ba2}
G.~Barbarino, C.~Garoni, and S.~Serra-Capizzano.
\newblock Block generalized locally {T}oeplitz sequences: Theory and
  applications in the multidimensional case.
\newblock {\em Electronic Transactions on Numerical Analysis}, 53:113--216,
  2020.

\bibitem{bini1983spectral}
D.~Bini and M.~Capovani.
\newblock Spectral and computational properties of band symmetric {T}oeplitz
  matrices.
\newblock {\em Linear Algebra and its Applications}, 52:99--126, 1983.

\bibitem{bolten2023note}
M.~Bolten, S.-E. Ekstr{\"o}m, I.~Furci, and S.~Serra-Capizzano.
\newblock A note on the spectral analysis of matrix sequences via {GLT}
  momentary symbols: From all-at-once solution of parabolic problems to
  distributed fractional order matrices.
\newblock {\em Electronic Transactions on Numerical Analysis}, 58:136--163,
  2023.

\bibitem{bozzo1995use}
E.~Bozzo and C.~Di~Fiore.
\newblock On the use of certain matrix algebras associated with discrete
  trigonometric transforms in matrix displacement decomposition.
\newblock {\em SIAM Journal on Matrix Analysis and Applications}, 16:312--326,
  1995.

\bibitem{Bressan:2019}
A.~Bressan and E.~Sande.
\newblock Approximation in {FEM}, {DG} and {IGA}: A theoretical comparison.
\newblock {\em Numerische Mathematik}, 143:923--942, 2019.

\bibitem{cantoni1976eigenvalues}
P.~Butler and A.~Cantoni.
\newblock Eigenvalues and eigenvectors of symmetric centrosymmetric matrices.
\newblock {\em Linear Algebra and its Applications}, 13:275--288, 1976.

\bibitem{Cottrell:2009}
J.~A. Cottrell, T.~J.~R. Hughes, and Y.~Bazilevs.
\newblock {\em Isogeometric Analysis: Toward Integration of {CAD} and {FEA}}.
\newblock John Wiley \& Sons, Chichester, 2009.

\bibitem{Cottrell:2006}
J.~A. Cottrell, A.~Reali, Y.~Bazilevs, and T.~J.~R. Hughes.
\newblock Isogeometric analysis of structural vibrations.
\newblock {\em Computer Methods in Applied Mechanics and Engineering},
  195:5257--5296, 2006.

\bibitem{de1978practical}
C.~de~Boor.
\newblock {\em A Practical Guide to Splines}, volume~27.
\newblock Springer-Verlag, New York, 1978.

\bibitem{deng2021analytical}
Q.~Deng.
\newblock Analytical solutions to some generalized and polynomial eigenvalue
  problems.
\newblock {\em Special Matrices}, 9:240--256, 2021.

\bibitem{deng2021boundary}
Q.~Deng and V.~M. Calo.
\newblock A boundary penalization technique to remove outliers from
  isogeometric analysis on tensor-product meshes.
\newblock {\em Computer Methods in Applied Mechanics and Engineering},
  383:113907, 2021.

\bibitem{benedetto2000optimal}
F.~Di~Benedetto and S.~Serra-Capizzano.
\newblock Optimal multilevel matrix algebra operators.
\newblock {\em Linear and Multilinear Algebra}, 48:35--66, 2000.

\bibitem{di1995matrix}
C.~Di~Fiore and P.~Zellini.
\newblock Matrix decompositions using displacement rank and classes of
  commutative matrix algebras.
\newblock {\em Linear Algebra and its Applications}, 229:49--99, 1995.

\bibitem{DiVona:2019}
E.~Di~Vona.
\newblock {K}olmogorov {$n$}-width e spazi spline ottimi.
\newblock {\em Tesi di Laura Magistrale, Universit\`a degli Studi di {R}oma
  {T}or {V}ergata}, 2019.

\bibitem{donatelli2016spectral}
M.~Donatelli, C.~Garoni, C.~Manni, S.~Serra-Capizzano, and H.~Speleers.
\newblock Spectral analysis and spectral symbol of matrices in isogeometric
  collocation methods.
\newblock {\em Mathematics of Computation}, 85:1639--1680, 2016.

\bibitem{SINUM}
M.~Donatelli, C.~Garoni, C.~Manni, S.~Serra-Capizzano, and H.~Speleers.
\newblock Symbol-based multigrid methods for {G}alerkin {B}-spline isogeometric
  analysis.
\newblock {\em SIAM Journal on Numerical Analysis}, 55:31--62, 2017.

\bibitem{ekstrom2018eigenvalues}
S.-E. Ekstr{\"o}m, I.~Furci, C.~Garoni, C.~Manni, S.~Serra-Capizzano, and
  H.~Speleers.
\newblock Are the eigenvalues of the {B}-spline isogeometric analysis
  approximation of {$-\Delta u= \lambda u$} known in almost closed form?
\newblock {\em Numerical Linear Algebra with Applications}, 25:e2198, 2018.

\bibitem{Floater:2017}
M.~S. Floater and E.~Sande.
\newblock Optimal spline spaces of higher degree for {$L^2$} {$n$}-widths.
\newblock {\em Journal of Approximation Theory}, 216:1--15, 2017.

\bibitem{Floater:2018}
M.~S. Floater and E.~Sande.
\newblock Optimal spline spaces for {$L^2$} {$n$}-width problems with boundary
  conditions.
\newblock {\em Constructive Approximation}, 50:1--18, 2019.

\bibitem{garoni2014spectrum}
C.~Garoni, C.~Manni, F.~Pelosi, S.~Serra-Capizzano, and H.~Speleers.
\newblock On the spectrum of stiffness matrices arising from isogeometric
  analysis.
\newblock {\em Numerische Mathematik}, 127:751--799, 2014.

\bibitem{garoni2022}
C.~Garoni, C.~Manni, F.~Pelosi, and H.~Speleers.
\newblock Spectral analysis of matrices resulting from isogeometric immersed
  methods and trimmed geometries.
\newblock {\em Computer Methods in Applied Mechanics and Engineering},
  400:115551, 2022.

\bibitem{garoni2020NURBS}
C.~Garoni, C.~Manni, S.~Serra-Capizzano, and H.~Speleers.
\newblock {NURBS} in isogeometric discretization methods: A spectral analysis.
\newblock {\em Numerical Linear Algebra with Applications}, 27:e2318, 2020.

\bibitem{garoni2017generalized}
C.~Garoni and S.~Serra-Capizzano.
\newblock {\em Generalized Locally {T}oeplitz Sequences: Theory and
  Applications}, volume~1.
\newblock Springer, Cham, 2017.

\bibitem{garoni2018generalized}
C.~Garoni and S.~Serra-Capizzano.
\newblock {\em Generalized Locally {T}oeplitz Sequences: Theory and
  Applications}, volume~2.
\newblock Springer, Cham, 2018.

\bibitem{SIMAX-Q-p-d}
C.~Garoni, S.~Serra-Capizzano, and D.~Sesana.
\newblock Spectral analysis and spectral symbol of {$d$}-variate {$Q_p$}
  {L}agrangian {FEM} stiffness matrices.
\newblock {\em SIAM Journal on Matrix Analysis and Applications},
  36:1100--1128, 2015.

\bibitem{garoni2019symbol}
C.~Garoni, H.~Speleers, S.-E. Ekstr{\"o}m, A.~Reali, S.~Serra-Capizzano, and
  T.~J.~R. Hughes.
\newblock Symbol-based analysis of finite element and isogeometric {B}-spline
  discretizations of eigenvalue problems: Exposition and review.
\newblock {\em Archives of Computational Methods in Engineering},
  26:1639--1690, 2019.

\bibitem{hiemstra2021removal}
R.~R. Hiemstra, T.~J.~R. Hughes, A.~Reali, and D.~Schillinger.
\newblock Removal of spurious outlier frequencies and modes from isogeometric
  discretizations of second-and fourth-order problems in one, two, and three
  dimensions.
\newblock {\em Computer Methods in Applied Mechanics and Engineering},
  387:114115, 2021.

\bibitem{Hughes:2005}
T.~J.~R. Hughes, J.~A. Cottrell, and Y.~Bazilevs.
\newblock Isogeometric analysis: {CAD}, finite elements, {NURBS}, exact
  geometry and mesh refinement.
\newblock {\em Computer Methods in Applied Mechanics and Engineering},
  194:4135--4195, 2005.

\bibitem{Hughes:2014}
T.~J.~R. Hughes, J.~A. Evans, and A.~Reali.
\newblock Finite element and {NURBS} approximations of eigenvalue,
  boundary-value, and initial-value problems.
\newblock {\em Computer Methods in Applied Mechanics and Engineering},
  272:290--320, 2014.

\bibitem{Lyche:2018}
T.~Lyche, C.~Manni, and H.~Speleers.
\newblock Foundations of spline theory: {B}-splines, spline approximation, and
  hierarchical refinement.
\newblock In T.~Lyche, C.~Manni, and H.~Speleers, editors, {\em Splines and
  {PDE}s: From Approximation Theory to Numerical Linear Algebra}, volume 2219
  of {\em Lecture Notes in Mathematics}, pages 1--76. Springer, Cham, 2018.

\bibitem{manni2022application}
C.~Manni, E.~Sande, and H.~Speleers.
\newblock Application of optimal spline subspaces for the removal of spurious
  outliers in isogeometric discretizations.
\newblock {\em Computer Methods in Applied Mechanics and Engineering},
  389:114260, 2022.

\bibitem{manni2023}
C.~Manni, E.~Sande, and H.~Speleers.
\newblock Outlier-free spline spaces for isogeometric discretizations of
  biharmonic and polyharmonic eigenvalue problems.
\newblock {\em Computer Methods in Applied Mechanics and Engineering},
  417:116314, 2023.

\bibitem{Sande:2019}
E.~Sande, C.~Manni, and H.~Speleers.
\newblock Sharp error estimates for spline approximation: Explicit constants,
  {$n$}-widths, and eigenfunction convergence.
\newblock {\em Mathematical Models and Methods in Applied Sciences},
  29:1175--1205, 2019.

\bibitem{Sande:2020}
E.~Sande, C.~Manni, and H.~Speleers.
\newblock Explicit error estimates for spline approximation of arbitrary
  smoothness in isogeometric analysis.
\newblock {\em Numerische Mathematik}, 144:889--929, 2020.

\bibitem{schumaker2007spline}
L.~L. Schumaker.
\newblock {\em Spline Functions: Basic Theory}.
\newblock Cambridge University Press, Cambridge, 2007.

\bibitem{LPO}
S.~Serra-Capizzano.
\newblock Some theorems on linear positive operators and functionals and their
  applications.
\newblock {\em Computers \& Mathematics with Applications}, 39:139--167, 2000.

\bibitem{sogn2019robust}
J.~Sogn and S.~Takacs.
\newblock Robust multigrid solvers for the biharmonic problem in isogeometric
  analysis.
\newblock {\em Computers \& Mathematics with Applications}, 77:105--124, 2019.

\bibitem{takacs2016approximation}
S.~Takacs and T.~Takacs.
\newblock Approximation error estimates and inverse inequalities for
  {B}-splines of maximum smoothness.
\newblock {\em Mathematical Models and Methods in Applied Sciences},
  26:1411--1445, 2016.

\end{thebibliography}

\end{document}